\newcommand{\nc}{\newcommand}
\newcommand{\delete}[1]{}
\definecolor{vert}{rgb}{0.,0.5,0.}
\newcommand{\opdend}{\mathbf{Dend}_\Omega^2}
\newcommand{\opdup}{\mathbf{Dup}_\Omega^2}
\newcommand{\Z}{\mathbb{Z}}
\newcommand{\dimK}{\mathrm{dim}_{\mathbf k}}
\newcommand{\cat}{\mathrm{Cat}}
\nc{\mlabel}[1]{\label{#1}}  
\nc{\mcite}[1]{\cite{#1}}  
\nc{\mref}[1]{\ref{#1}}  
\nc{\mbibitem}[1]{\bibitem{#1}} 
\nc{\mlabel}[1]{\label{#1}  
{\hfill \hspace{1cm}{\small\tt{{\ }\hfill(#1)}}}}
\nc{\mcite}[1]{\cite{#1}{\small{\tt{{\ }(#1)}}}}  
\nc{\mref}[1]{\ref{#1}{{\tt{{\ }(#1)}}}}  
\nc{\mbibitem}[1]{\bibitem[\bf #1]{#1}} 
\newtheorem{theorem}{Theorem}[section]
\newtheorem{prop}[theorem]{Proposition}
\newtheorem{coro}[theorem]{Corollary}
\theoremstyle{definition}
\newtheorem{defn}[theorem]{Definition}
\newtheorem{remark}[theorem]{Remark}
\newtheorem{exam}[theorem]{Example}
\newtheorem{prop-def}{Proposition-Definition}[section]
\newcommand\cal[1]{\mathcal{\MakeUppercase{#1}}}
\newcommand\alphlist{a,b,c,d,e,f,g,h,i,j,k,l,m,n,o,p,q,r,s,t,u,v,w,x,y,z}
\newcommand\Alphlist{A,B,C,D,E,F,G,H,I,J,K,L,M,N,O,P,Q,R,S,T,U,V,W,X,Y,Z}
\newcommand\getcmds[3]{\expandafter\newcommand\csname #2#1\endcsname{#3{#1}}}
\alphlist\do{\expandafter\getcmds\expandafter{\x}{cal}{\cal}}      
\alphlist\do{\expandafter\getcmds\expandafter{\x}{frak}{\mathfrak}}
\Alphlist\do{\expandafter\getcmds\expandafter{\x}{frak}{\mathfrak}}
\nc{\bfk}{{\bf k}}
\font\cyr=wncyr10
\newfont{\scyr}{wncyr10 scaled 550}
\nc{\sha}{\mbox{\cyr X}}
\nc{\ssha}{\mbox{\bf \scyr X}}
\nc{\Id}{\mathrm{Id}}
\nc{\lbar}[1]{\overline{#1}}
\nc{\ot}{\otimes}
\nc{\dep}{\mathrm{dep}}
\nc{\tred}[1]{\textcolor{red}{#1}} \nc{\tgreen}[1]{\textcolor{green}{#1}}
\nc{\tblue}[1]{\textcolor{blue}{#1}} \nc{\tpurple}[1]{\textcolor{purple}{#1}}
\nc{\li}[1]{\tpurple{\underline{Li: }#1 }}
\nc{\liadd}[1]{\tpurple{#1}}
\nc{\xing}[1]{\tblue{\underline{Xing: }#1 }}
\nc{\yuan}[1]{\tred{\underline{Yuan: }#1 }}
\nc{\markus}[1]{\tred{\underline{Markus: } #1}}
\nc{\dominique}[1]{\tpurple{\underline{Dominique: }#1 }}
\tikzset{
baseon/.style={baseline={($(#1)+(0,-0.58ex)$)}},
baseon/.default=current bounding box.center,
every picture/.style=baseon,
lst/.style={},
dst/.style={fill,circle,inner sep=1.5pt,outer sep=0pt},
ddst/.style={diamond,draw,inner sep=1pt},
eest/.style={ellipse,draw,inner sep=1pt,minimum size=2ex},
}
\newcommand\treeo[2][]{\tikz[x=0.7cm,y=0.7cm,line width=0.15ex,
every node/.style={font=\scriptsize,inner sep=2pt,label distance=1pt},#1]{%
\coordinate (o) at (0,0);#2}}%
\def\zzz#1`#2...#3`#4...#5`#6@{%
--++(#1)
node[dst,label={#5:$#6$},name=#2]{}
node[midway,auto,#3]{$#4$}
}
\def\ddd#1`#2`#3@{+(#1)node[ddst,name=#2]{$#3$}}
\def\eee#1`#2`#3@{+(#1)node[eest,name=#2]{$#3$}}
\def\xxx#1`#2@{node[midway,auto,inner sep=1pt,#1]{$#2$}}
\def\pp#1`#2`#3@{node[dst,label={#2:$#3$},pos=#1]{}}
\def\oo#1`#2`#3@{\path (o) node[dst,label={#2:$#3$},name=o,#1]{};}
\def\eoo#1`#2@{\node[eest,name=o,#1] at (o) {$#2$};}
\newlength\xch
\newsavebox\dbox
\sbox\dbox{\tikz{\fill (0,0) circle (0.05cm);}}
\newif\ifqdd
\newif\ifzdd
\newcommand\lstyle{}
\newcommand\cddf[3]{%
\coordinate (#2) at ($(#1)+(#3)$);
\draw[\lstyle] (#1)--(#2);
\ifqdd\node at (#1) {\usebox\dbox};\fi
\ifzdd\node at (#2) {\usebox\dbox};\fi}
\newcommand\cdx[4][1]{\cddf{#2}{#3}{#4:#1*\xch}}
\newcommand\cdl[2][1]{\cdx[#1]{#2}{#2l}{135}}
\newcommand\cdr[2][1]{\cdx[#1]{#2}{#2r}{45}}
\newcommand\cdlr[2][1]{%
\foreach \i in {#2} {\cdl[#1]{\i}\cdr[#1]{\i}}}
\newcommand\cdb[2][1]{\cdx[#1]{#2}{#2b}{-90}}
\newcommand\ocdx[6][1]{%
\node[draw,circle,minimum size=2pt,label={#6:$#5$}]
(#3) at ($(#2)+(#4:#1*\xch)$) {};
\draw (#2)--(#3);}
\newcommand\scopeclip[1]{\begin{scope}
\clip(-1.1,-0.5)rectangle(1.1,1);#1\end{scope}}
\newcommand\XX[2][]{%
\tikz[line width=0.15ex,x=0.5cm,y=0.5cm,baseline,inner sep=1.5pt,
every node/.style={font=\scriptsize},#1]{
\scopeclip{\draw (135:1.5)--(0,0)--(45:1.5) (0,-0.5)--(0,0);}#2}}
\newcommand\xx[3]{%
\scopeclip{\draw(#1/10,#2/10)--+(#3*45:2.5);}}
\newcommand\xxl[2]{\xx{#1}{#2}3}
\newcommand\xxr[2]{\xx{#1}{#2}1}
\newcommand\xxh[6]{
\draw(#1/10,#2/10)+(0.5*#3*45+0.5*#4*45:#6) node[above] {$#5$};}
\newcommand\xxhu[4][0.15]{\xxh{#2}{#3}13{#4}{#1}}
\newcommand\stree[1]{\XX{\xxhu[0.25]00{#1}}}
\nc{\dnx}{\Delta_n A} \nc{\dx}{\Delta A} \nc{\dgp}{{\rm deg_{P}}}
\nc{\dgt}{{\rm deg_{T}}} \nc{\dg}{{\rm deg}} \nc{\ida}{ID($A$)} \nc{\tu}{\tilde{u}} \nc{\tv}{\tilde{v}}
\nc{\nr}{\calr_n} \nc{\nz}{\calz_n} \nc{\fun}{\cala_{n,d}}
 \nc{\fbase}{\calb} \nc{\LF}{\mathrm{RF}} \nc{\FFA}{\mathrm{LF}} \nc{\irr}{\mathrm{Irr}}
 \nc{\result}{\bfk\mathrm{Irr}(S_n)}  \nc{\I}{I_{\mathrm{ID},n}^0}
 \nc{\nrs}{\calr_n^ \circledast } \nc{\ii}{\mathrm{I}} \nc{\iii}{\mathrm{II}}
\nc{\intl}{{\rm int}}\nc{\ws}[1]{{#1}}\nc{\deleted}[1]{\delete{#1}}\nc{\plas}{placements\xspace}
\nc{\bim}[1]{#1}  \nc{\shaop}{\sha_{\Omega}^{+}}  \nc{\shao}{\sha_{\Omega}}
\nc{\bbim}[2]{#1 #2} \nc{\bbbim}[2]{#1,\, #2} \nc{\RBF}{{\rm RBF}}
\nc{\frb}{F_{\RB}} \nc{\shaf}{\ssha_{\tiny{\Omega}}} \nc{\sham}{\diamond_{\tiny{\Omega}}}
\nc{\lf}{\lfloor} \nc{\rf}{\rfloor} \nc{\shan}{\ssha_{\lambda}}
\nc{\rlex}{{\rm {lex}}} \nc{\bb}{\Box} \nc{\ra}{\rightarrow}
\nc{\e}{{\rm {e}}}
\nc{\DD}{\mathrm{D}(X,\,\Omega)}
\nc{\bre}{\mathrm{bre}}
\nc{\dec}{\mathrm{dec}}
\nc{\type}{\mathrm{type}}
\long\def\ignore#1{}
\nc{\cP}{\hskip 2pt ${\mathcal P}$\put(-3.5,4){\circle{11}}\hskip 5pt}
\nc{\cR}{\hskip 2pt ${\mathcal R}$\put(-4.3,3.7){\circle{11}}\hskip 5pt}
\nc{\cpi}{\hskip 2pt ${\pi}$\put(-3.5,3){\circle{9}}\hskip 5pt}
\def\fleche#1{\mathop{\hbox to #1 mm{\rightarrowfill}}\limits}
\def\gfleche#1{\mathop{\hbox to #1 mm{\leftarrowfill}}\limits}
\def\inj#1{\mathop{\hbox to #1 mm{$\lhook\joinrel$\rightarrowfill}}\limits}
\def\ginj#1{\mathop{\hbox to #1 mm{\leftarrowfill$\joinrel\rhook$}}\limits}
\def\surj#1{\mathop{\hbox to #1 mm{\rightarrowfill\hskip 2pt\llap{$\rightarrow$}}}\limits}
\def\gsurj#1{\mathop{\hbox to #1 mm{\rlap{$\leftarrow$}\hskip 2pt \leftarrowfill}}\limits}
\def\wwt#1{\widetilde{\widetilde{#1}}}
\def\diagramme #1{\vskip 4mm \centerline {#1} \vskip 4mm}
\def \restr#1{\mathstrut_{\textstyle |}\raise-6pt\hbox{$\scriptstyle #1$}}
\def \srestr#1{\mathstrut_{\scriptstyle |}\hbox to -1pt{}\raise-4pt\hbox{\hskip 1pt$\scriptscriptstyle #1$}}
\def\Ve#1,#2,#3;{\vee_{#1,\,(#2,\,#3)}}
\def\bigv#1;#2;#3;{\bigvee\nolimits_{#1}^{#2;\,#3}}
\def\ba#1;{\backslash_{#1}\,}
\def\bd#1;{\mathbin{\,_{#1}/}}
\def\ct#1,#2;{\cdot_{#1,\,#2}}
\nc{\la}{\leftarrow}
\nc{\DS}{\mathbf{DS}}
\nc{\br}{\blacktriangleright}
\nc{\bl}{\blacktriangleleft}
\newcommand{\DUS}{\mathbf{DuS}}
\newcommand{\EDUS}{\mathbf{EDuS}}
\nc{\tr}{\triangleright}
\nc{\tl}{\triangleleft}
\nc{\DDF}{\mathrm{DD}(X,\,\Omega)}\nc{\DTF}{\mathrm{DT}(X,\,\Omega)}
\begin{document}

\title[Families of algebraic structures]{Families of algebraic structures}
%
%
\author{Lo\"\i c Foissy}
\address{ULCO}
\email{loic.foissy@univ-littoral.fr}
\author{Dominique Manchon}
\address{Laboratoire de Math\'ematiques Blaise Pascal,
CNRS--Universit\'e Clermont-Auvergne,
3 place Vasar\'ely, CS 60026,
F63178 Aubi\`ere, France}
\email{Dominique.Manchon@uca.fr}
\author{Yuanyuan Zhang} \address{School of Mathematics and Statistics,
Lanzhou University, Lanzhou, 730000, P. R. China}
\email{zhangyy17@lzu.edu.cn}

\date{\today}

\begin{abstract}
We give a general account of family algebras over a finitely presented linear operad, this operad together with its presentation naturally defining an algebraic structure on the set of parameters.
\end{abstract}

\subjclass[2010]{
16W99, 
16S10, 
08B20, 
16T30,  
17D25      
}

\keywords{Monoidal categories, graded objects, operads, colored operads, dendriform algebras, duplicial algebras, pre-Lie algebras, typed decorated planar binary trees, diassociative semigroups.}

\maketitle

\tableofcontents

\setcounter{section}{0}

\allowdisplaybreaks
\section{Introduction}\mlabel{sec:rota}
The first family algebra structures appeared in the literature in 2007: a natural example of Rota-Baxter family algebras of weight $-1$ was given by J.~Gracia-Bond\'\i a, K.~Ebrahimi-Fard and F.~Patras in a paper on Lie-theoretic aspects of renormalization~\cite[Proposition~9.1]{EGP} (see also~\cite{DK}). The notion of Rota-Baxter family itself was suggested to the authors by Li Guo (see Footnote after Proposition 9.2 therein), who started a systematic study of these Rota-Baxter family algebras in \cite{Guo09}, including the more general case of weight $\lambda$. They are associative algebras $P$ over some field $\bfk$ together with a collection $(P_\omega)_{\omega\in\Omega}$ of linear endomorphisms indexed by a semigroup $\Omega$ such that the Rota-Baxter family relation
\begin{equation*}
P_{\alpha}(a)P_{\beta}(b)=P_{\alpha\beta}\bigl( P_{\alpha}(a)b  + a P_{\beta}(b) + \lambda ab \bigr)
\end{equation*}
 holds for any $a, b \in R$ and $\alpha,\, \beta \in \Omega$. The example in \cite{EGP} is given by the momentum renormalization scheme: here $\Omega$ is the additive semigroup of non-negative integers, and the operator $P_\omega$ associates to a Feynman diagram integral its Taylor expansion of order $\omega$ at vanishing exterior momenta.  The simplest example we can provide, derived from the minimal subtraction scheme, is the algebra of Laurent series $R=\bfk[z^{-1},z]]$, where, for any $\omega\in\Omega=\mathbb Z$, the operator $P_\omega$ is the projection onto the subspace $R_{<\omega}$ generated by $\{z^k,\,k<\omega\}$ parallel to the supplementary subspace $R_{\ge \omega}$ generated by $\{z^k,\,k\ge\omega\}$.\\
 
 Other families of algebraic structures appeared more recently: dendriform and tridendriform family algebras \mcite{ZG, ZGM, Foi20}, pre-Lie family algebras \cite{MZ},... The principle consists in replacing each product of the structure by a family of products, so that the operadic relations (Rota-Baxter, dendriform, pre-Lie,...) still hold in a ``family" version taking the semigroup structure of the parameter set into account. An important step in understanding family structures in general has been recently done by M. Aguiar, who defined family $\mathcal P$-algebras for any linear operad $\mathcal P$ \cite{A2020}. The semigroup $\Omega$ of parameters must be commutative unless the operad is non-sigma. An important point is that any $n$-ary operation gives rise to a family of operations parametrized by $\Omega^n$. In particular, the natural way to ``familize" a binary operation requires two parameters.\\
 
 The first author recently described a variant of one-parameter dendriform family algebras for which the set $\Omega$ of parameters in endowed with the very rich structure of extended diassociative semigroup \cite{Foi20}. We follow here the same path for two-parameter dendriform family algebras, where $\Omega$ is a now a (non-extended) diassociative semigroup. This suggests that the natural algebraic structure of $\Omega$ is determined in some way by the operad one starts with. This appears to be the case: we define family $\mathcal P$-algebras for any finitely presented linear operad $\mathcal P$, in a way which depends on the presentation chosen. The definition makes sense when the parameter set $\Omega$ is endowed with a \cP-algebra structure, where \cP is a set operad determined by $\mathcal P$ and its presentation. Following the lines of M.~Aguiar, we define family $\mathcal P$-algebras indexed by $\Omega$ as \textsl{uniform $\Omega$-graded $\mathcal P$-algebras}. The notion of $\Omega$-graded $\mathcal P$-algebra, when $\Omega$ is a \cP-algebra, is defined via {color-mixing operads}, which are generalizations of the current-preserving operads of \cite{S2014}.\\
 
 The paper is organized as follows: we investigate one-parameter and two-parameter dendriform family algebras over a fixed base field $\bfk$ in some detail in Section \ref{sect:dd}, as well as their duplicial counterparts. We give the definition of a two-parameter dendriform family algebra $A$ indexed by a diassociative semigroup $\Omega$. This structure on the index set naturally appears when one asks $A\otimes \bfk\Omega$ to be a graded dendriform algebra, in the sense that the homogeneous components $(A\otimes\bfk\omega)_{\omega\in\Omega}$ are respected. The further structure of extended diassociative semigroup (EDS) appears for one-parameter dendriform family algebras \cite{Foi20}. The situation for duplicial family is similar but simpler, due to the fact that the duplicial operad is a set operad. The structure which appears on $\Omega$ is that of duplicial semigroup. Again, a further structure of extended duplicial semigroup (EDuS) appears in the one-parameter version.\\
 
 We also give an example of two-parameter duplicial family algebra in terms of planar binary trees with $\Omega\times\Omega$-typed edges, and we prove that planar binary trees with $\Omega$-typed edges provide free one-parameter $\Omega$-duplicial algebras for any EDuS $\Omega$. To conclude this section, we give the generating series of the dimensions of the free two-parameter duplicial (or dendriform) family algebra with one generator, when the parameter set $\Omega$ is finite.\\
 
 We give a reminder of colored operads in Joyal's species formalism \cite{J1234} in Section \ref{sect:operadic}, and give a brief account of graded objects. Following a crucial idea in \cite{A2020}, we describe the uniformization functor $\mathcal U$ from ordinary (monochromatic) operads to colored operads (resp., with the same notations, from a suitable monoidal category to its graded version), and its left-adjoint, the completed forgetful functor $\overline{\mathcal F}$.\\
 
 In Section \ref{sect:prelie}, we study the pre-Lie case in some detail. The pre-Lie operad $\mathcal P$ gives rise to four different set operads, namely the associative operad, the twist-associative operad governing Thedy's rings with $x(yz)=(yx)z$ \cite{T1967}, an operad built from corollas governing rings with $x(yz)=y(xz)$ and $(xy)z=(yx)z$, and finally the Perm operad governing rings with $x(yz)=y(xz)=(xy)z=(yx)z$, i.e. set-theoretical Perm algebras. This last operad is a quotient of the three others and gives rise to family pre-Lie algebras. Finally, color-mixing operads and the general definition of $\Omega$-family algebras are given in Section \ref{sect:multigraded}.\\
 
\noindent {\bf Notation:} In this paper, we fix a field $\bfk$ and assume that an algebra is a \bfk-algebra. The letter $\Omega$ will denote a set of indices, which will be endowed with various structures throughout the article.
\section{Dendriform and duplicial family algebras}\label{sect:dd}
\subsection{Two-parameter $\Omega$-dendriform algebras}\label{par:omegadend}
First, we borrow some concepts from the first author's recent article~\cite{Foi20}.
\begin{defn}\mlabel{defn:dia}
{\bf A diassociative semigroup} is a triple $(\Omega,\la,\ra)$, where $\Omega$ is a set and $\la,\ra:\Omega\times\Omega\ra\Omega$ are maps such that, for any $\alpha,\beta,\gamma\in\Omega:$
\begin{align}
(\alpha\la\beta)\la\gamma&=\alpha\la(\beta\la\gamma)=\alpha\la(\beta\ra\gamma),\\
(\alpha\ra\beta)\la\gamma&=\alpha\ra(\beta\la\gamma),\\
(\alpha\ra\beta)\ra\gamma&=(\alpha\la\beta)\ra\gamma=\alpha\ra(\beta\ra \gamma).
\end{align}
\end{defn}

\begin{exam}~\cite{Foi20}
\begin{enumerate}
\item If $(\Omega, \circledast )$ is an associative semigroup, then $(\Omega, \circledast , \circledast )$ is a diassociative semigroup.
\item Let $\Omega$ be a set. For $\alpha,\beta \in \Omega,$ let
\begin{align*}
&\alpha \la \beta=\alpha,&\alpha \rightarrow \beta=\beta.
\end{align*}
Then $(\Omega,\la,\ra)$ is a diassociative semigroup denoted by $\DS(\Omega)$.
\end{enumerate}
\end{exam}

\begin{defn}\mlabel{defn:pare}
Let $\Omega$ be a diassociative semigroup with two products $\la$ and $\ra$. {\bf A two-parameter $\Omega$-dendriform algebra} is a family $(A,(\prec_{\alpha,\beta},\succ_{\alpha,\beta})_{\alpha,\beta\in\Omega})$ where $A$ is a vector space and
$$\prec_{\alpha,\beta},\succ_{\alpha,\beta}:A\ot A\ra A$$
are bilinear binary products such that for any $x,y,z\in A$, for any $\alpha,\beta\in\Omega,$
\begin{align}
(x\prec_{\alpha,\beta}y)\prec_{\alpha\la\beta,\gamma}z
&=x\prec_{\alpha,\beta\la\gamma}(y\prec_{\beta,\gamma} z)+x\prec_{\alpha,\beta\ra\gamma}(y\succ_{\beta,\gamma}z),\mlabel{eq:prec}\\
(x\succ_{\alpha,\beta}y)\prec_{\alpha\ra\beta,\gamma}z&=x\succ_{\alpha,\beta\la\gamma}
(y\prec_{\beta,\gamma}z),\mlabel{eq:prsu}\\
x\succ_{\alpha,\beta\ra\gamma}(y\succ_{\beta,\gamma}z)
&=(x\succ_{\alpha,\beta}y\succ_{\alpha\ra\beta,\gamma}z
+(x\prec_{\alpha,\beta}y)\succ_{\alpha\la\beta,\gamma}z.\mlabel{eq:succ}
\end{align}
\end{defn}

\begin{remark}
\begin{enumerate}
\item If $\Omega$ is a set, we recover the definition of a two-parameter version of matching dendriform algebras \cite{GGZ} when we consider $(\Omega,\la,\ra)$ as a diassociative semigroup, that is, for any $\alpha,\beta\in\Omega,$
    $$\alpha\la\beta=\alpha,\quad \alpha\ra\beta=\beta.$$
\item If $(\Omega, \circledast )$ is a semigroup, we recover the definition of two-parameter dendriform family algebras given in ~\cite{A2020} when we consider
    $$\alpha\la\beta=\alpha\ra\beta=\alpha \circledast \beta.$$
\end{enumerate}
\end{remark}
\ignore{
\noindent Concretely,
\begin{defn}\mlabel{defn:mat}
Let $\Omega$ be a nonempty set. A {\bf two-parameter $\Omega$-matching dendriform algebra} is a vector space $A$ together with a family of binary operations $(\prec_{\alpha,\beta},\succ_{\alpha,\beta})_{\alpha,\beta\in\Omega}$, such that for $x,y,z\in A$ and $\alpha,\beta\in\Omega$, there is
\begin{align}
(x\prec_{\alpha,\beta} y)\prec_{\alpha,\gamma} z&=x\prec_{\alpha,\beta}(y\prec_{\beta,\gamma} z)+x\prec_{\alpha,\gamma}(y\succ_{\beta,\gamma} z),\mlabel{eq:par1}\\
(x\succ_{\alpha,\beta} y)\prec_{\beta,\gamma} z&=x\succ_{\alpha,\beta}(y\prec_{\beta,\gamma} z),\mlabel{eq:par2}\\
(x\prec_{\alpha,\beta} y)\succ_{\alpha,\gamma} z+(x\succ_{\alpha,\beta} y)\succ_{\beta,\gamma} z&=x\succ_{\alpha,\gamma}(y\succ_{\beta,\gamma} z).\mlabel{eq:par3}
\end{align}
\end{defn}

\begin{defn}\mlabel{defn:dend}\cite[Paragraph 2.5]{A2020}
Let $\Omega$ be a semigroup. A {\bf two-parameter $\Omega$-family dendriform  algebra} is a vector space $A$ with a family of binary operations $(\prec_{\alpha,\beta},\succ_{\alpha,\beta})_{\alpha,\beta\in\Omega}$ such that for $ x, y, z\in A$ and $\alpha,\beta\in \Omega$,
\begin{align}
(x\prec_{\alpha,\beta} y) \prec_{\alpha \circledast \beta,\gamma} z=\ & x \prec_{\bim{\alpha,\beta \circledast \gamma}} (y\prec_{\beta,\gamma} z+y \succ_{\beta,\gamma} z), \mlabel{eq:ddf1} \\
(x\succ_{\alpha,\beta} y)\prec_{\alpha \circledast \beta,\gamma} z=\ & x\succ_{\alpha,\beta \circledast \gamma} (y\prec_{\beta,\gamma} z),\mlabel{eq:ddf2} \\
(x\prec_{\alpha,\beta} y+x\succ_{\alpha,\beta} y) \succ_{\bim{\alpha \circledast \beta,\gamma}}z  =\ & x\succ_{\alpha,\beta \circledast \gamma}(y \succ_{\beta,\gamma} z). \mlabel{eq:ddf3}
\end{align}
\end{defn}
}
Two-parameter $\Omega$-dendriform algebras are related to dendriform algebras
and diassociative semigroups by the following proposition:

\begin{prop} \label{propequivalence}
Let $\Omega$ be a set with two binary operations $\la$ and $\ra$.
\begin{enumerate}
\item Let $A$ be a $\bfk$-vector space and let
$$\prec_{\alpha,\beta},\succ_{\alpha,\beta}:A\ot A\ra A$$
be two families of bilinear binary products indexed by $\Omega\times\Omega$. We define products $\prec$ and $\succ$ on the space $A\otimes \bfk\omega$ by:
\begin{eqnarray}
(x\otimes\alpha)\prec(y\otimes\beta)&=&(x\prec_{\alpha,\beta}y)\otimes (\alpha\leftarrow\beta),\label{grad-one1}\\
(x\otimes\alpha)\succ(y\otimes\beta)&=&(x\succ_{\alpha,\beta}y)\otimes (\alpha\rightarrow\beta).\label{grad-two1}
\end{eqnarray}
If $(A\otimes \bfk\omega,\prec,\succ)$ is a dendriform algebra, then \eqref{eq:prec}, \eqref{eq:prsu} and \eqref{eq:succ} hold.\label{propequivalence-a}
\item The following conditions are equivalent:
\begin{enumerate}
\item For any $\big(A,(\prec_{\alpha,\beta},\succ_{\alpha,\beta})
_{\alpha,\beta\in\Omega}\big)$ where $A$ is a $\bfk$-vector space and where \eqref{eq:prec}, \eqref{eq:prsu} and \eqref{eq:succ} hold, the vector space $A\otimes \bfk\Omega$ endowed with the binary operations $\prec$ and $\succ$ defined by
\eqref{grad-one1} and \eqref{grad-two1} is a dendriform algebra,
\item $(\Omega,\la,\ra)$ is a diassociative semigroup,
\item Any $\big(A,(\prec_{\alpha,\beta},\succ_{\alpha,\beta})
_{\alpha,\beta\in\Omega}\big)$ where $A$ is a $\bfk$-vector space and where \eqref{eq:prec}, \eqref{eq:prsu} and \eqref{eq:succ} hold is a two-parameter $\Omega$-dendriform algebra.
\end{enumerate}
\label{propequivalence-b}
\end{enumerate}
\end{prop}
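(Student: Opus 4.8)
The plan is to prove part~\eqref{propequivalence-a} first by a direct expansion, and then deduce part~\eqref{propequivalence-b} as a formal consequence of it, using the fact that a suitable ``generic'' choice of $A$ and of the products $\prec_{\alpha,\beta},\succ_{\alpha,\beta}$ detects the diassociative relations one at a time.

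For part~\eqref{propequivalence-a}, I would take arbitrary $x,y,z\in A$ and $\alpha,\beta,\gamma\in\Omega$, and write down each of the three dendriform axioms for $(A\otimes\bfk\Omega,\prec,\succ)$ applied to the homogeneous elements $x\otimes\alpha$, $y\otimes\beta$, $z\otimes\gamma$. Using \eqref{grad-one1}--\eqref{grad-two1}, the left dendriform axiom $(u\prec v)\prec w=u\prec(v\prec w+v\succ w)$ becomes, on the $A$-component, exactly $(x\prec_{\alpha,\beta}y)\prec_{\alpha\la\beta,\gamma}z=x\prec_{\alpha,\beta\la\gamma}(y\prec_{\beta,\gamma}z)+x\prec_{\alpha,\beta\ra\gamma}(y\succ_{\beta,\gamma}z)$, which is \eqref{eq:prec}; similarly the middle and right axioms give \eqref{eq:prsu} and \eqref{eq:succ}. (The $\Omega$-components of both sides must also agree, but that is not needed here: the hypothesis that $(A\otimes\bfk\Omega,\prec,\succ)$ is dendriform already includes the equality of the full tensors, so in particular the $A$-components coincide.) This direction is essentially bookkeeping; the only mild care needed is that $\bfk\Omega=\bigoplus_{\omega}\bfk\omega$ so that comparing homogeneous components is legitimate.

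For part~\eqref{propequivalence-b}, the implications (b)$\Rightarrow$(a) and (a)$\Leftrightarrow$(c) are the easy ones. Given (b), i.e.\ $(\Omega,\la,\ra)$ diassociative, one checks that for homogeneous $x\otimes\alpha,y\otimes\beta,z\otimes\gamma$ the three dendriform axioms for $(A\otimes\bfk\Omega,\prec,\succ)$ hold: the $A$-components agree by \eqref{eq:prec}--\eqref{eq:succ}, and the $\Omega$-components agree precisely because of the three diassociative identities (e.g.\ in the left axiom the left side carries $\omega=(\alpha\la\beta)\la\gamma$ while the two right terms carry $\alpha\la(\beta\la\gamma)$ and $\alpha\la(\beta\ra\gamma)$, all equal by Definition~\ref{defn:dia}; likewise $(\alpha\ra\beta)\la\gamma=\alpha\ra(\beta\la\gamma)$ for the middle axiom and the three expressions for the right axiom). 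Since the dendriform axioms are multilinear, checking them on homogeneous generators suffices, giving (a). The equivalence (a)$\Leftrightarrow$(c) is immediate: Definition~\ref{defn:pare} says a two-parameter $\Omega$-dendriform algebra is exactly a space satisfying \eqref{eq:prec}--\eqref{eq:succ}, so (c) is the assertion that those relations \emph{always} give such a structure — but by Definition~\ref{defn:pare} they tautologically do once $\Omega$ is known to be diassociative, so (c) just restates (b), and conversely assuming (c) for the free example forces the relations of Definition~\ref{defn:pare}, hence (b).

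The main obstacle, and the one step deserving real care, is (a)$\Rightarrow$(b): I must produce, for each of the (in)equalities in Definition~\ref{defn:dia}, a concrete family $(A,(\prec_{\alpha,\beta},\succ_{\alpha,\beta}))$ satisfying \eqref{eq:prec}--\eqref{eq:succ} but for which $A\otimes\bfk\Omega$ fails to be dendriform unless that identity holds. The natural candidate is the free two-parameter $\Omega$-dendriform algebra on one (or two) generators, or more economically a small ``truncated'' model: take $A$ spanned by symbols that realize the nonzero sides of a given axiom and make all other products vanish. Then the $A$-components of a dendriform axiom hold by construction, while its $\Omega$-component reduces to one of the equations $(\alpha\la\beta)\la\gamma=\alpha\la(\beta\la\gamma)$, etc.; demanding that $A\otimes\bfk\Omega$ be dendriform for \emph{all} such $A$ forces every one of the three lines of Definition~\ref{defn:dia}. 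I would organize this as: (i) show each defining identity of a diassociative semigroup arises as the $\Omega$-part of exactly one dendriform axiom on a well-chosen $A$; (ii) conclude all three hold; hence (b). Assembling (a)$\Rightarrow$(b)$\Rightarrow$(a) and (b)$\Leftrightarrow$(c) closes the cycle.
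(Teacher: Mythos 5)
Your proposal is correct and follows essentially the same route as the paper: direct expansion of the three dendriform axioms for part \ref{propequivalence-a}, the free algebra on three generators plus linear functionals for (a)$\Rightarrow$(b), a direct check for (b)$\Rightarrow$(a), and the observation that (b)$\Leftrightarrow$(c) is tautological from Definition~\ref{defn:pare}. One point where your write-up is imprecise and the paper is more careful: in part \ref{propequivalence-a}, the three terms of the expanded axiom carry a priori \emph{distinct} $\Omega$-labels $(\alpha\la\beta)\la\gamma$, $\alpha\la(\beta\la\gamma)$, $\alpha\la(\beta\ra\gamma)$, so there is no single ``$A$-component'' to compare; the paper extracts \eqref{eq:prec} by applying $\mathrm{Id}_A\otimes f$, where $f:\bfk\Omega\to\bfk$ sends every $\delta\in\Omega$ to $1$, and dually, for (a)$\Rightarrow$(b) it applies $g\otimes\mathrm{Id}_{\bfk\Omega}$ with $g$ chosen using the linear independence of $x\prec_{\alpha,\beta\la\gamma}(y\prec_{\beta,\gamma}z)$ and $x\prec_{\alpha,\beta\ra\gamma}(y\succ_{\beta,\gamma}z)$ in the free algebra — exactly the linear-independence point your sketch gestures at but should state explicitly to make the ``truncated model'' or free-algebra argument airtight.
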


\begin{proof}
\ref{propequivalence-a}. Let us consider the three dendriform axioms:
\begin{align*}
\Big((x\ot\alpha)\prec(y\ot\beta)\Big)\prec(z\ot\gamma)&=(x\ot\alpha)
\prec\Big((y\ot\beta)\prec(z\ot\gamma)+(y\ot\beta)\succ(z\ot\gamma)\Big)\\
\Big((x\ot\alpha)\succ(y\ot\beta)\Big)\prec(z\ot\gamma)&=(x\ot\alpha)
\succ\Big((y\ot\beta)\prec(z\ot\gamma)\Big)\\
(x\ot\alpha)\succ\Big((y\ot\beta)\succ(z\ot\gamma)\Big)&=
\Big((x\ot\alpha)\succ(y\ot\beta)+(x\ot\alpha)\prec(y\ot\beta)\Big)\succ(z\ot\gamma).
\end{align*}
The first one gives:
\begin{align}
\label{eqdendriforme1}
(x\prec_{\alpha,\beta} y)\prec_{\alpha \la \beta,\gamma}z \otimes (\alpha \la \beta) \la \gamma
&=x\prec_{\alpha,\beta \la \gamma}(y\prec_{\beta,\gamma} z)\otimes \alpha \la (\beta\la \gamma)\\
\nonumber&+x\prec_{\alpha,\beta \ra \gamma}(y\succ_{\beta,\gamma} z)\otimes \alpha \ra (\beta\la \gamma).
\end{align}
Let $f:\bfk\Omega \longrightarrow \bfk$ be the linear map sending any $\delta \in \Omega$ to $1$.
Applying $Id_A\otimes f$ to both sides of (\ref{eqdendriforme1}), we obtain (\ref{eq:prec}).
Similarly, the second dendriform axiom gives (\ref{eq:prsu}) and the last one gives (\ref{eq:succ}).\\

\ref{propequivalence-b}. $(i)\Longrightarrow (ii)$. Let us consider the free 2-parameter $\Omega$-dendriform algebra $A$ on three generators
$x$, $y$ and $z$ (from the operad theory, such an object exists). Let us fix $\alpha$, $\beta$ and $\gamma$ in $\Omega$.
According to the relations defining  2-parameter $\Omega$-dendriform algebras,
$x\prec_{\alpha,\beta \la \gamma}(y\prec_{\beta,\gamma} z)$ and $x\prec_{\alpha,\beta \ra \gamma}(y\succ_{\beta,\gamma} z)$
are linearly independent in $A$. Let $g:A\longrightarrow \bfk$ be a linear map such that
\begin{align*}
g\big(x\prec_{\alpha,\beta \la \gamma}(y\prec_{\beta,\gamma} z)\big)&=1,\\
g\big(x\prec_{\alpha,\beta \ra \gamma}(y\succ_{\beta,\gamma} z)\big)&=0.
\end{align*}
Applying $g\otimes Id_{\bfk\Omega}$ on both sides of (\ref{eqdendriforme1}), we obtain that there exists a scalar
$\lambda$ such that $\lambda(\alpha \la \beta) \la \gamma=\alpha \la (\beta\la \gamma)$.
As $(\alpha \la \beta) \la \gamma$ and $\alpha \la (\beta\la \gamma)$ are both elements of $\Omega$, necessarily
$\lambda=1$. Using a linear map $h:A\longrightarrow\bfk$ such that
\begin{align*}
h\big(x\prec_{\alpha,\beta \la \gamma}(y\prec_{\beta,\gamma} z)\big)&=0,\\
h\big(x\prec_{\alpha,\beta \ra \gamma}(y\succ_{\beta,\gamma} z)\big)&=1,
\end{align*}
we obtain that $(\alpha \la \beta) \la \gamma=\alpha \ra (\beta\la \gamma)$.
The other axioms of diassociative semigroups are obtained in the same way from the second and third
dendriform axioms.

$(ii)\Longrightarrow (i)$. If $(ii)$ holds, (\ref{eqdendriforme1}) immediately implies that the first dendriform axiom
is satisfied for any $A$. The second and third dendriform axioms are proved in the same way. Finally, $(ii)\Longleftrightarrow (iii)$ is obvious.
\end{proof}

\begin{remark}
We recover the ordinary (i.e. one-parameter) definitions of matching dendriform algebras in~\cite{GGZ} (resp. dendriform family algebras in~\cite{ZGM}) from Definition~\mref{defn:pare} if $\Omega$ is a set with diassociative semigroup structure given by $\alpha\leftarrow\beta=\alpha$ and $\alpha\rightarrow\beta=\beta$ for any $\alpha,\beta\in\Omega$ (resp. if $(\Omega,\circledast)$ is a semigroup with diassociative semigroup structure given by $\alpha\leftarrow\beta=\alpha\rightarrow\beta=\alpha\circledast\beta$ for any $\alpha,\beta\in\Omega$), if we suppose that $\prec_{\alpha,\beta}$ depends only on $\beta$ and $\succ_{\alpha,\beta}$ depends only on $\alpha$:
\begin{equation*}
\prec_{\alpha,\beta}=\prec_\beta,\quad\succ_{\alpha,\beta}=\succ_\alpha\,\text{ for }\, \alpha,\beta\in\Omega.
\end{equation*}
A general definition of one-parameter dendriform family algebras encompassing both \cite{GGZ} and \cite{ZGM} has been recently proposed by the first author. This requires an extra structure of \textsl{extended diassociative semigroup} (in short, EDS) on the index set $\Omega$, namely two extra binary products $\lhd,\rhd$ subject to a bunch of compatibility axioms between themselves and with the diassociative structure $(\leftarrow,\rightarrow)$ \cite{Foi20}. More precisely, if $(\Omega,\la,\ra,\lhd,\rhd)$ is an extended diassociative semigroup and  $\big(A,(\prec_{\alpha},\succ_{\alpha})_{\alpha\in\Omega}\big)$ is a one-parameter $\Omega$-dendriform algebra in the sense of \cite{Foi20}, then it is a 2-parameter $\Omega$-dendriform algebra with the products
\begin{align*}
\prec_{\alpha,\beta}&=\prec_{\alpha \lhd \beta},&
\succ_{\alpha,\beta}&=\succ_{\alpha \rhd \beta}.
\end{align*}
This is an immediate consequence of Proposition 18 of \cite{Foi20} and Proposition \ref{propequivalence}-\ref{propequivalence-a}.
\end{remark}

\subsection{Two-parameter $\Omega$-duplicial algebras}\label{par:omegadup}
\noindent We can mimick step by step the construction of Paragraph \ref{par:omegadend}:
\begin{defn}\mlabel{defn:dus}
{\bf A duplicial semigroup} is a triple $(\Omega,\la,\ra)$, where $\Omega$ is a set and $\la,\ra:\Omega\times\Omega\ra\Omega$ are maps such that, for any $\alpha,\beta,\gamma\in\Omega:$
\begin{align}
(\alpha\la\beta)\la\gamma&=\alpha\la(\beta\la\gamma),\notag\\
(\alpha\ra\beta)\la\gamma&=\alpha\ra(\beta\la\gamma),\notag\\
(\alpha\ra\beta)\ra\gamma&=\alpha\ra(\beta\ra \gamma).
\end{align}
\end{defn}
\begin{remark}
Any diassociative semigroup is a duplicial semigroup, but the converse is not true: the two properties
$$\alpha\la(\beta\la\gamma)=\alpha\la(\beta\ra\gamma)\hbox{ and }(\alpha\la\beta)\ra\gamma=(\alpha\ra\beta)\ra\gamma$$
are always verified in a diassociative semigroup, but are not required in a duplicial semigroup.
\end{remark}
\ignore{
\begin{exam}~\cite{Foi20}
\begin{enumerate}
\item If $(\Omega, \circledast )$ is an associative semigroup, then $(\Omega, \circledast , \circledast )$ is a diassociative semigroup.
\item Let $\Omega$ be a set. For $\alpha,\beta \in \Omega,$ let
\begin{align*}
&\alpha \la \beta=\alpha,&\alpha \rightarrow \beta=\beta.
\end{align*}
Then $(\Omega,\la,\ra)$ is a diassociative semigroup denoted by $\DS(\Omega)$.
\end{enumerate}
\end{exam}
}
\begin{defn}\mlabel{defn:paredup}
Let $\Omega$ be a duplicial semigroup with two products $\la$ and $\ra$. {\bf A two-parameter $\Omega$-duplicial algebra} is a family $(A,(\prec_{\alpha,\beta},\succ_{\alpha,\beta})_{\alpha,\beta\in\Omega})$ where $A$ is a vector space and
$$\prec_{\alpha,\beta},\succ_{\alpha,\beta}:A\ot A\ra A$$
are bilinear binary products such that for any $x,y,z\in A$, for any $\alpha,\beta\in\Omega,$
\begin{align}
(x\prec_{\alpha,\beta}y)\prec_{\alpha\la\beta,\gamma}z
&=x\prec_{\alpha,\beta\la\gamma}(y\prec_{\beta,\gamma} z),\mlabel{eq:duprec}\\
(x\succ_{\alpha,\beta}y)\prec_{\alpha\ra\beta,\gamma}z&=x\succ_{\alpha,\beta\la\gamma}
(y\prec_{\beta,\gamma}z),\mlabel{eq:duprsu}\\
x\succ_{\alpha,\beta\ra\gamma}(y\succ_{\beta,\gamma}z)
&=(x\succ_{\alpha,\beta}y\succ_{\alpha\ra\beta,\gamma}z.\mlabel{eq:dusucc}
\end{align}
\end{defn}

Two-parameter $\Omega$-duplicial algebras are related to duplicial algebras
and duplicial semigroups by the following proposition:
\begin{prop} \label{dupropequivalence}
Let $\Omega$ be a set with two binary operations $\la$ and $\ra$.
\begin{enumerate}
\item Let $A$ be a $\bfk$-vector space and let
$$\prec_{\alpha,\beta},\succ_{\alpha,\beta}:A\ot A\ra A$$
be two families of bilinear binary products indexed by $\Omega\times\Omega$. We define products $\prec$ and $\succ$ on the space $A\otimes \bfk\omega$ by:
\begin{eqnarray}
(x\otimes\alpha)\prec(y\otimes\beta)&=&(x\prec_{\alpha,\beta}y)\otimes (\alpha\leftarrow\beta),\label{grad-one}\\
(x\otimes\alpha)\succ(y\otimes\beta)&=&(x\succ_{\alpha,\beta}y)\otimes (\alpha\rightarrow\beta).\label{grad-two}
\end{eqnarray}
If $(A\otimes \bfk\omega,\prec,\succ)$ is a duplicial algebra, 
then \eqref{eq:duprec}, \eqref{eq:duprsu} and \eqref{eq:dusucc} hold.\label{dupropequivalence-a}
\item The following conditions are equivalent:
\begin{enumerate}
\item For any $\big(A,(\prec_{\alpha,\beta},\succ_{\alpha,\beta})
_{\alpha,\beta\in\Omega}\big)$ where $A$ is a $\bfk$-vector space and where \eqref{eq:duprec}, \eqref{eq:duprsu} and \eqref{eq:dusucc} hold, the vector space $A\otimes \bfk\Omega$ endowed with the binary operations $\prec$ and $\succ$ defined by
\eqref{grad-one} and \eqref{grad-two} is a duplicial algebra,
\item $(\Omega,\la,\ra)$ is a duplicial semigroup,
\item Any $\big(A,(\prec_{\alpha,\beta},\succ_{\alpha,\beta})
_{\alpha,\beta\in\Omega}\big)$ where $A$ is a $\bfk$-vector space and where \eqref{eq:prec}, \eqref{eq:prsu} and \eqref{eq:succ} hold is a two-parameter $\Omega$-duplicial algebra.
\end{enumerate}
\label{dupropequivalence-b}
\end{enumerate}
\end{prop}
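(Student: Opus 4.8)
The plan is to mimic the proof of Proposition~\ref{propequivalence} essentially verbatim, with the duplicial axioms replacing the dendriform axioms and the (weaker) duplicial semigroup axioms replacing the diassociative ones. The key structural point is that, unlike the dendriform case, the three duplicial relations \eqref{eq:duprec}, \eqref{eq:duprsu}, \eqref{eq:dusucc} each have exactly one term on each side, so the homogeneous components carry one parameter on each side and there is no ``linear combination'' subtlety to worry about --- this is precisely the simplification announced in the introduction.

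First I would prove part~\ref{dupropequivalence-a}. Expanding the three duplicial axioms $(x\prec y)\prec z=x\prec(y\prec z)$, $(x\succ y)\prec z=x\succ(y\prec z)$, $x\succ(y\succ z)=(x\succ y)\succ z$ on $A\otimes\bfk\Omega$ using \eqref{grad-one} and \eqref{grad-two}, the first one yields
\begin{equation}\label{eqdupl1}
(x\prec_{\alpha,\beta}y)\prec_{\alpha\la\beta,\gamma}z\otimes(\alpha\la\beta)\la\gamma
=x\prec_{\alpha,\beta\la\gamma}(y\prec_{\beta,\gamma}z)\otimes\alpha\la(\beta\la\gamma),
\end{equation}
and analogously for the other two. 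Applying $\Id_A\otimes f$, where $f:\bfk\Omega\to\bfk$ sends every $\delta\in\Omega$ to $1$, kills the $\Omega$-component and produces exactly \eqref{eq:duprec}; the second and third axioms give \eqref{eq:duprsu} and \eqref{eq:dusucc} in the same way.

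For part~\ref{dupropequivalence-b}, the implication $(ii)\Rightarrow(i)$ is immediate: if $(\Omega,\la,\ra)$ is a duplicial semigroup, then in \eqref{eqdupl1} the two $\Omega$-labels coincide, so \eqref{eqdupl1} reduces to \eqref{eq:duprec} tensored with a single element of $\Omega$, whence the first duplicial axiom holds on $A\otimes\bfk\Omega$; the other two axioms are handled identically. The equivalence $(ii)\Leftrightarrow(iii)$ is just the definition (Definition~\ref{defn:paredup}), as in the dendriform case. For $(i)\Rightarrow(ii)$ I would take $A$ to be the free two-parameter $\Omega$-duplicial algebra on three generators $x,y,z$ (which exists by operad theory), fix $\alpha,\beta,\gamma\in\Omega$, and note that $x\prec_{\alpha,\beta\la\gamma}(y\prec_{\beta,\gamma}z)$ is a nonzero element of $A$; choosing a linear map $g:A\to\bfk$ with $g$ equal to $1$ on that element and applying $g\otimes\Id_{\bfk\Omega}$ to \eqref{eqdupl1} (now valid in $A\otimes\bfk\Omega$ by hypothesis $(i)$) forces a scalar $\lambda$ with $\lambda\,(\alpha\la\beta)\la\gamma=\alpha\la(\beta\la\gamma)$ in $\bfk\Omega$; since both sides are single basis elements of $\bfk\Omega$, necessarily $\lambda=1$ and $(\alpha\la\beta)\la\gamma=\alpha\la(\beta\la\gamma)$. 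The second and third duplicial axioms yield $(\alpha\ra\beta)\la\gamma=\alpha\ra(\beta\la\gamma)$ and $(\alpha\ra\beta)\ra\gamma=\alpha\ra(\beta\ra\gamma)$ in the same manner, which are exactly the three defining relations of a duplicial semigroup. I do not anticipate a genuine obstacle here; the only point requiring a word of care is the nonvanishing of the relevant monomial in the free algebra, which follows because the duplicial relations are confluent and terminating (equivalently, free duplicial algebras have the expected dimensions), so each single monomial $x\prec_{\alpha,\beta\la\gamma}(y\prec_{\beta,\gamma}z)$ survives as a basis element.
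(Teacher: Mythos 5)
Your proposal is correct and follows exactly the route the paper intends: the text states that the proof is ``similar to the proof of Proposition \ref{propequivalence} and left to the reader,'' and your adaptation is a faithful transcription of that dendriform argument (expand the axioms on $A\otimes\bfk\Omega$, apply $\Id_A\otimes f$ for part~(a), and use the free algebra on three generators together with a separating functional for $(i)\Rightarrow(ii)$). You also correctly identify the simplification that each duplicial relation has a single term per side, so only nonvanishing of one monomial is needed rather than linear independence of two.
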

\noindent The proof is similar to the proof of Proposition \ref{propequivalence} and left to the reader.
\begin{remark}
A duplicial semigroup is nothing but a duplicial algebra in the monoidal category of sets. Hence the duplicial algebra structure on $A\otimes\bfk \Omega$ together with the $\Omega$-grading yields a duplicial algebra structure on $\Omega$. This property has no equivalent in the dendriform case. It owes to the fact that the duplicial operad is a set operad, whereas the dendriform operad is a linear operad which is not reducible to a set operad. Another occurrence of this phenomenon will be described in greater detail in Section \ref{sect:prelie} devoted to two-parameter family pre-Lie algebras.
\end{remark}
\noindent Now we give a concrete example of two-parameter $\Omega$-duplicial algebra, which uses typed decorated planar binary trees \mcite{BHZ, ZGM}.
\begin{defn}\mlabel{defn:tdtree}
Let $X$ and $\Omega$ be two sets. An {\bf $X$-decorated $\Omega\times \Omega$-typed (abbreviated two-parameter typed decorated) planar binary tree}
is a triple $T = (T,\dec, \type)$, where
\begin{enumerate}
\item $T$ is a planar binary tree.
\item $\dec: V(T)\ra X$ is a map, where $V(T)$ stands for the set of internal vertices of $T$,
\item $\type: IE(T)\ra \Omega\times \Omega$ is a map, where $IE(T)$ stands for the set of internal edges of $T$.
\end{enumerate}
\end{defn}

\begin{exam}
Let $X$ and $\Omega$ be two sets. The typed decorated planar binary trees with three internal vertices are

\[\XX[scale=1.6]{\xxr{-4}4\xxr{-7.5}{7.5}
\node at (-0.75,0.08) {$(\alpha,\beta)$};
\node at (-1.00,0.4) {$(\gamma,\delta)$};
\xxhu00{x} \xxhu[0.1]{-4}4{y} \xxhu[0.1]{-7.5}{7.5}{z}
}, \,
\XX[scale=1.6]{\xxl44\xxl{7.5}{7.5}
\node at (0.75,0.08) {$(\alpha,\beta)$};
\node at (1.00,0.4) {$(\gamma,\delta)$};
\xxhu00{x} \xxhu[0.1]44{y} \xxhu[0.1]{7.5}{7.5}{z}
}, \,
\XX[scale=1.6]{\xxr{-6}6\xxl66
\node at (-0.75,0.15) {$(\gamma,\delta)$};
\node at (0.75,0.15) {$(\alpha,\beta)$};
\xxhu00{x} \xxhu66{y} \xxhu{-6}6{z}
}, \,
\XX[scale=1.6]{\xxr{-5}5\xxl{-2}8
\node at (-0.75,0.1) {$(\alpha,\beta)$};
\node at (-0.00,0.65) {\tiny $(\gamma,\delta)$};
\xxhu00x
\xxhu[0.1]{-5}5{y} \xxhu[0.1]{-2}8{z}
}, \,
\XX[scale=1.6]{\xxl55\xxr28
\node at (0.75,0.15) {$(\alpha,\beta)$};
\node at (0.00,0.65) {\tiny $(\gamma,\delta)$};
\xxhu[0.1]00{x\,}
\xxhu[0.1]55{\,y} \xxhu[0.1]28{z}
}\]
with $x,y,z\in X$ and $(\alpha,\beta),(\gamma,\delta)\in\Omega\times\Omega$.
\end{exam}

Denote by $\DD$ the set of two-parameter typed decorated planar binary trees. For any $s\in\DD$ we denote by $\bar{s}$ the subjacent decorated tree, forgetting the types.
\begin{defn}\mlabel{defn:act}
Let $\Omega$ be a set. For $\alpha,\beta\in\Omega,$ first define
\begin{equation}
s\prec_{\alpha,\beta}t:=\bar{s}\prec\bar{t}+\text{ following types }
\mlabel{eq:dpre}
\end{equation}
which means grafting $t$ on $s$ at the rightmost leaf, and the types follow the rules below:
\begin{itemize}
\item the new edge is typed by the pair $(\alpha,\beta);$
\item any internal edge of $t$ has its type moved as follows:
$$(\omega,\tau)\mapsto (\omega,\tau\la\beta);$$
\item any internal edge of $s$ has its type moved as follows:
$$(\omega,\tau)\mapsto (\alpha\la\omega,\tau);$$
\item other edges keep their types unchanged.
\end{itemize}
Similarly, we second define
\begin{equation}
s\succ_{\alpha,\beta}t:=\bar{s}\succ\bar{t}+\text{ following types }
\mlabel{eq:dsuc}
\end{equation}
which means grafting $s$ on $t$ at the leftmost leaf, and the types follow the following rules:
\begin{itemize}
\item the new edge is typed by the pair $(\alpha,\beta);$
\item any internal edge of $t$ has its type moved as follows:
$$(\omega,\tau)\mapsto (\alpha\ra\omega,\tau);$$
\item any internal edge of $s$ has its type moved as follows:
$$(\omega,\tau)\mapsto (\omega,\tau\ra\beta);$$
\item other edges keep their types unchanged.
\end{itemize}
\end{defn}

\begin{exam}
Let $X$ and $\Omega$ be two sets. Let
$$s=\XX[scale=1.6]{\xxr{-6}6\xxl66
\node at (-0.9,0.15) {$(\alpha_1,\alpha_2)$};
\node at (0.85,0.15) {$(\beta_1,\beta_2)$};
\xxhu00{x} \xxhu66{z} \xxhu{-6}6{y}
},\text { and }\,
t=\XX[scale=1.6]{\xxl66
\node at (0.85,0.15) {$(\beta_1,\beta_2)$};
\xxhu00{m} \xxhu66{n} 
}.$$ Then
\begin{align*}
s\succ_{\alpha,\beta}t&=\XX[scale=1.6]{\xxr{-6}6\xxl66
\node at (-0.9,0.15) {$(\alpha_1,\alpha_2)$};
\node at (0.85,0.15) {$(\beta_1,\beta_2)$};
\xxhu00{x} \xxhu66{z} \xxhu{-6}6{y}
}\succ_{\alpha,\beta}\XX[scale=1.6]{\xxl66
\node at (0.85,0.15) {$(\beta_1,\beta_2)$};
\xxhu00{m} \xxhu66{n} 
}
=\XX[scale=2.0]{\xxr{-5}5\xxl{-2}8\xxr{-8}{8}\xxl66
\node at (-0.65,0.15) {$(\alpha,\beta)$};
\node at (-1.5,0.55) {$(\alpha_1,\alpha_2\ra\beta)$};
\node at (1.1,0.15) {$(\alpha\ra\beta_1,\beta_2)$};
\node (a) at (0.15,1.5) {$(\alpha_3,\alpha_4\ra\beta)$};
\path (-0.35,0.65)+(-30:1pt) coordinate (b);
\draw[->] (a) to[out=-90,in=-30] (b);
\xxhu00m
\xxhu[0.1]{-5}5{x} \xxhu[0.1]{-2}8{z}\xxhu[0.1]{-8}{8}{y}\xxhu{6}6{n}
},\\
s\prec_{\alpha,\beta}t&=\XX[scale=1.6]{\xxr{-6}6\xxl66
\node at (-0.9,0.15) {$(\alpha_1,\alpha_2)$};
\node at (0.85,0.15) {$(\beta_1,\beta_2)$};
\xxhu00{x} \xxhu66{z} \xxhu{-6}6{y}
}\prec_{\alpha,\beta}\XX[scale=1.6]{\xxl66
\node at (0.85,0.15) {$(\beta_1,\beta_2)$};
\xxhu00{m} \xxhu66{n} 
}=\XX[scale=2.5]{\xxl33\xxl{5.5}{5.5}\xxr{-7.5}{7.5}\xxl{8.5}{8.5}
\node at (-1.2,0.3) {$(\alpha_1,\alpha_2\la\beta)$};
\node at (0.7,0) {$(\alpha_3,\alpha_4\la\beta)$};
\node at (1.4,0.65) {$(\alpha\la\beta_1,\beta_2)$};
\node at (0.8,0.40) {$(\alpha,\beta)$};
\xxhu00{x} \xxhu[0.1]33{z} \xxhu[0.1]{5.5}{5.5}{m} \xxhu[0.1]{-7}7{y}\xxhu[0.1]{8.5}{8.5}{n}
}
\end{align*}
\end{exam}

\begin{prop}\mlabel{prop:dup}
Let $X$ and $\Omega$ be two sets. The pair $\big(\DD,(\prec_{\alpha,\beta},\succ_{\alpha,\beta})_{\alpha,\beta\in\Omega}\big)$ is a two-parameter $\Omega$-duplicial algebra.
\end{prop}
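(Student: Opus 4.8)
The plan is to verify the three duplicial axioms \eqref{eq:duprec}, \eqref{eq:duprsu} and \eqref{eq:dusucc} directly on $\DD$ by a careful bookkeeping of grafts and type-propagations, for arbitrary $s,t,u\in\DD$. The structural (type-free) identities $(\bar s\prec\bar t)\prec\bar u=\bar s\prec(\bar t\prec\bar u)$, $(\bar s\succ\bar t)\prec\bar u=\bar s\succ(\bar t\prec\bar u)$, and $\bar s\succ(\bar t\succ\bar u)=(\bar s\succ\bar t)\succ\bar u$ are exactly the defining relations of the duplicial operad on planar binary trees, which we may assume; so in each case both sides of the family axiom are supported on the \emph{same} underlying decorated tree, and the only thing to check is that every internal edge receives the same type $(\omega,\tau)\in\Omega\times\Omega$ on both sides. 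First I would fix notation: write the internal edges of $s$, $t$, $u$ and distinguish the ``new'' edges created by each graft; then track, for a given internal edge, the sequence of transformations $\omega\mapsto\alpha\la\omega$ (or $\alpha\ra\omega$) on the first coordinate and $\tau\mapsto\tau\la\beta$ (or $\tau\ra\beta$) on the second, reading off the order in which the two grafts act on that edge from Definition \ref{defn:act}.

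For \eqref{eq:duprec}: on the left, $s\prec_{\alpha,\beta}t$ first grafts $t$ onto $s$ with new edge typed $(\alpha,\beta)$, edges of $t$ getting $(\omega,\tau)\mapsto(\omega,\tau\la\beta)$ and edges of $s$ getting $(\omega,\tau)\mapsto(\alpha\la\omega,\tau)$; then $(-)\prec_{\alpha\la\beta,\gamma}u$ grafts $u$ at the new rightmost leaf (which lies inside the former $t$), typing the newest edge $(\alpha\la\beta,\gamma)$, moving edges of $u$ by second coordinate $\la\gamma$, and moving \emph{all} internal edges already present by first coordinate $(\alpha\la\beta)\la(-)$. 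On the right, $s\prec_{\alpha,\beta\la\gamma}(t\prec_{\beta,\gamma}u)$ first forms $t\prec_{\beta,\gamma}u$ (new edge $(\beta,\gamma)$, edges of $u$ by $\la\gamma$ on the second coordinate, edges of $t$ by $\beta\la(-)$ on the first), then grafts this onto $s$ with new edge $(\alpha,\beta\la\gamma)$, edges of the $(t\prec u)$-part by $\la(\beta\la\gamma)$ on the second coordinate, edges of $s$ by $\alpha\la(-)$ on the first. Comparing edge by edge — the $s$-edges, the $t$-edges, the $u$-edges, and the two created edges — the required equalities are precisely the associativity $(\alpha\la\beta)\la\gamma=\alpha\la(\beta\la\gamma)$ on the relevant coordinate and the trivial fact that an untouched coordinate stays put; these hold because $\Omega$ is a duplicial semigroup. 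The analysis of \eqref{eq:duprsu} and \eqref{eq:dusucc} is the same in spirit: \eqref{eq:duprsu} uses the mixed relation $(\alpha\ra\beta)\la\gamma=\alpha\ra(\beta\la\gamma)$ (appearing on the first coordinate of the $s$-edges and of a created edge) plus $\la$-associativity on the $t,u$-side, and \eqref{eq:dusucc} uses $(\alpha\ra\beta)\ra\gamma=\alpha\ra(\beta\ra\gamma)$; note \eqref{eq:dusucc} has a single term on each side, so no $\la/\ra$ clash arises, consistent with the fact that duplicial (unlike dendriform) needs none of the extra EDS-type axioms.

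The main obstacle is purely combinatorial rather than conceptual: one must be scrupulous about \emph{which} edges are ``internal edges of $s$'' versus ``internal edges of $t$'' at each stage, since after the first graft the edge set has changed and the second graft's rule ``any internal edge of [the first argument]'' now refers to the enlarged tree, so an $s$-edge can be acted on by a first-coordinate transformation coming from step one \emph{and} from step two, and one has to be sure the compositions match. I would organize this by classifying internal edges of the final tree into four types according to their origin (inside $s$, inside $t$, inside $u$, or newly created by a graft) and writing down for each type the ordered pair of operations it undergoes on each side; the identities then reduce, type by type, to one of the three duplicial-semigroup axioms or to a tautology. Because the underlying-tree identities are already known and the type computations are finitely many routine comparisons, I would present the $\prec\prec$ case in full detail and remark that \eqref{eq:duprsu} and \eqref{eq:dusucc} are handled identically, using the second and third duplicial-semigroup axioms respectively.
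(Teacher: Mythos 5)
Your overall strategy --- observe that both sides of each axiom are supported on the same underlying decorated tree, classify the internal edges of the composite by origin (inside $s$, inside $t$, inside $u$, or newly created), track the two successive type-transformations each edge undergoes on each side, and reduce the comparison edge-type by edge-type to the duplicial-semigroup axioms --- is exactly the strategy of the paper's proof, which writes the $\prec\prec$ case out in prose, presents the mixed case as a table, and leaves the $\succ\succ$ case to the reader.

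There is, however, a concrete problem with the transformation rules you use for $\prec$. You take Definition \ref{defn:act} literally: in $s\prec_{\alpha,\beta}t$, edges of $t$ get $(\omega,\tau)\mapsto(\omega,\tau\la\beta)$ and edges of $s$ get $(\omega,\tau)\mapsto(\alpha\la\omega,\tau)$. With these rules the edge-by-edge comparison in \eqref{eq:duprec} does \emph{not} close. For instance, the new edge joining the root of $u$ to the rightmost leaf of $t$ receives the type $(\alpha\la\beta,\gamma)$ on the left-hand side (it is created in the second step and never touched again), whereas on the right-hand side it is created as $(\beta,\gamma)$ and then, being an internal edge of the grafted piece $t\prec_{\beta,\gamma}u$, has its second coordinate hit, giving $(\beta,\gamma\la(\beta\la\gamma))$; no duplicial-semigroup axiom identifies these two. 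Likewise the $s$-edges end up as $\bigl((\alpha\la\beta)\la(\alpha\la\omega),\tau\bigr)$ on the left but $(\alpha\la\omega,\tau)$ on the right. So the assertion that ``the required equalities are precisely the associativity of $\la$'' fails as stated. The rules that actually make the proposition true --- and that are the ones used in the paper's Example and throughout the paper's own proof --- have the roles of $s$ and $t$ interchanged in the $\prec$ case: in $s\prec_{\alpha,\beta}t$ the edges of the grafted piece $t$ get $(\omega,\tau)\mapsto(\alpha\la\omega,\tau)$ and the edges of the base $s$ get $(\omega,\tau)\mapsto(\omega,\tau\la\beta)$ (the $\succ$ rules you quote are consistent with the Example and need no change). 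The text of Definition \ref{defn:act} contains a typo here, so the slip is understandable, but you reproduced it without carrying the comparison far enough to detect it; with the corrected rules every identity you list does reduce to one of the three duplicial-semigroup axioms and your argument coincides with the paper's.
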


\begin{proof}
For $s,t,u\in\DD$ and $\alpha,\beta,\gamma\in\Omega$, we first prove Eq.~(\mref{eq:duprec}). Let us look at the right hand side of Eq.~(\mref{eq:duprec}), that is, $s\prec_{\alpha,\beta\la\gamma}(t\prec_{\beta,\gamma}u)$. We divide the procedure into two steps.
\begin{itemize}
\item \textsl{First step}: we deal with $t\prec_{\beta,\gamma}u$, we have the new edge typed by $(\beta,\gamma)$; the edges of $u$ have their types $(\omega,\tau)$ changed into $(\beta\la\omega,\tau)$; the edges of $t$ have their types $(\omega,\tau)$ changed into $(\omega,\tau\la\omega).$
\item\textsl {Second step}: we deal with $s\prec_{\alpha,\beta\la\gamma}(t\prec_{\beta,\gamma}u)$, which means grafting $t\prec_{\beta,\gamma}u$ on the rightmost leaf of $s$. The new edge has its type $(\alpha,\beta\la\gamma)$; the new edge of $t\prec_{\beta,\gamma}u$ produced in the first step has its type $(\beta,\gamma)$ changed into $(\alpha\la\beta,\gamma)$; the edges of $u$ have their types $(\beta\la\omega,\tau)$ changed into $\big(\alpha\la(\beta\la\omega),\tau\big)$; the edges of $t$ have their types $(\omega,\tau\la\gamma)$ changed into $(\alpha\la\omega,\tau\la\gamma)$; the edges of $s$ have their types $(\omega,\tau)$ changed into $\bigl(\omega,\tau\la(\beta\la\gamma)\bigr).$
\end{itemize}
Let us now look at the left hand side of Eq.~(\mref{eq:duprec}), that is, $(s\prec_{\alpha,\beta}t)\prec_{\alpha\la\beta,\gamma} u$. We also divide into the procedure two steps.
\begin{itemize}
\item \textsl{First step}: we deal with $s\prec_{\alpha,\beta}t$: we graft $t$ on $s$, and the new edge typed by $(\alpha,\beta)$; the edges of $t$ have their types $(\omega,\tau)$ changed into $(\alpha\la\omega,\tau)$; the edges of $s$ have their types $(\omega,\tau)$ changed into $(\omega,\tau\la\beta).$
\item\textsl{Second step}: we deal with $(s\prec_{\alpha,\beta}t)\prec_{\alpha\la\beta,\gamma}u$. The new edge typed by $(\alpha\la\beta,\gamma)$; the new edge of $s\prec_{\alpha,\beta}t$ has its type $(\alpha,\beta)$ changed into $(\alpha,\beta\la\gamma)$; the edges of $s$ have their types $(\omega,\tau\la\beta)$ changed into $(\omega,(\tau\la\beta)\la\gamma)$; the edges of $t$ have their type $(\alpha\la\omega,\tau)$ changed into $(\alpha\la\omega,\tau\la\gamma)$; the edges of $u$ have their types $(\omega,\tau)$ changed into $((\alpha\la\beta)\la\omega,\tau).$
\end{itemize}
Comparing both sides and using the duplicial semigroup axioms proves Equation (\mref{eq:duprec}).\\

\noindent Second, we prove Equation~\eqref{eq:duprsu}. We use a table for comparison.
$$
\begin{tabular}{c|c}
  \hline
  $s\succ_{\alpha,\beta\la \gamma}(t\prec_{\beta,\gamma}u)$ & $(s\succ_{\alpha,\beta}t)\prec_{\alpha\ra\beta,\gamma}u$  \\
  \hline

  \text{ the first step}: $t\prec_{\beta,\gamma}u$ & \text{ the first step}: $s\succ_{\alpha,\beta}t$  \\
  \hline

  \text{ new edge typed by} $(\beta,\gamma)$ & \text{ new edge typed by} $(\alpha,\beta)$  \\
  \hline

  \text{ the edges of $u$}   & \text{ the edges of $s$ }  \\

   $(\omega,\tau)\mapsto(\beta\la\omega,\tau)$ & $(\omega,\tau)\mapsto(\omega,\tau\ra\beta)$\\
  \hline
  \text{ the edges of $t$ } & \text{ the edges of $t$ }  \\

   $(\omega,\tau)\mapsto(\omega,\tau\la\gamma)$ &  $(\omega,\tau)\mapsto(\alpha\ra\omega,\tau)$\\
 \hline
  \text{ the second step:} $s\succ_{\alpha,\beta\la\gamma}(t\prec_{\beta,\gamma}u)$ & \text{ the second step:} $(s\succ_{\alpha,\beta}t)\prec_{\alpha\ra\beta,\gamma}u$  \\

 \hline
  \text{ the new edge typed by $(\alpha,\beta\la\gamma)$} & \text{ the new edge typed by $(\alpha\ra\beta,\gamma)$}  \\
  \hline
  \text{ the new edge of $t\prec_{\beta,\gamma}u$ } & \text{ the new edge of $s\succ_{\alpha,\beta}t$ }  \\

   $(\omega,\gamma)\mapsto(\alpha\ra\beta,\gamma)$ & $(\alpha,\beta)\mapsto(\alpha,\gamma\la\gamma)$\\
  \hline
  \text{ the edges of $t$ } & \text{ the edges of $t$ }  \\

   $(\omega,\tau\la\gamma)\mapsto(\alpha\ra\omega,\tau\la\gamma)$ & $(\alpha\ra\omega,\tau)\mapsto(\alpha\ra\omega,\tau\la\gamma)$\\
  \hline
  \text{ the edges of $u$}  & \text{ the edges of $u$ }  \\
   $(\beta\la\omega,\tau)\mapsto\big(\alpha\ra(\beta\la\omega),\tau\big)$ & $(\omega,\tau)\mapsto\big((\alpha\la\beta)\la\omega,\tau\big)$\\
  \hline
  \text{ the edges of $s$ } & \text{ the edges of $s$ }  \\
  $(\omega,\tau)\mapsto(\omega,\tau\ra(\beta\la\gamma))$ & $(\omega,\tau\ra\beta)\mapsto(\omega,(\tau\ra\beta)\la\gamma)$\\
  \hline
\end{tabular}
$$
So both the left hand side and right hand side coincide.\\

\noindent Last, Eq.~(\mref{eq:dusucc}) can be proved similarly to Eq.~(\mref{eq:duprec}). Details are left to the reader.
\end{proof}
\subsection{Free one-parameter $\Omega$-duplicial algebras}
We give a general definition of one-parameter $\Omega$-duplicial algebras in the spirit of \cite{Foi20}.
\ignore{
Similar to Example 1.2 of Definition 1 by the first author~\cite{Foi20}, we give the following definition.
\begin{defn}\label{defn:sus}
Let $\Omega$ be a set. For $\alpha,\beta\in\Omega,$ let
\[\alpha\la\beta=\alpha,\,\alpha\ra\beta=\beta.\]
Then $(\Omega,\la,\ra)$ is a duplicial semigroup, denote by $\DUS(\Omega)$.
\end{defn}
}
\begin{defn}\label{defn:edsu}
An extended duplicial semigroup (briefly, EDuS) is a family $(\Omega,\la,\ra,\lhd,\rhd)$, where $\Omega$ is a set and $\la,\ra,\lhd,\rhd:\Omega\times\Omega\ra\Omega$ are maps such that:
\begin{enumerate}
\item $(\Omega,\la,\ra)$ is a duplicial semigroup.
\item For any $\alpha,\beta,\gamma\in\Omega$,
\begin{align}
\label{eq:ext0} \alpha\rhd (\beta \leftarrow \gamma)&=\alpha \rhd \beta,\\
\label{eq:ext00} (\alpha \rightarrow \beta)\lhd \gamma&=\beta \lhd \gamma,\\
\label{eq:ext1}(\alpha\lhd\beta)\la\Big((\alpha\la\beta)\lhd\gamma\Big)
&=\alpha\lhd(\beta\la\gamma),\\
\label{eq:ext2}(\alpha\lhd\beta)\lhd\Big((\alpha\la\beta)\lhd\gamma\Big)
&=\beta\lhd\gamma,\\
\label{eq:ext3}\Big(\alpha\rhd(\beta\ra\gamma)\Big)\rhd(\beta\rhd\gamma)
&=\alpha\rhd\beta,\\
\label{eq:ext4}\Big(\alpha\rhd(\beta\ra\gamma)\Big)\ra(\beta\rhd\gamma)
&=(\alpha\ra\beta)\rhd\gamma.
\end{align}
\end{enumerate}
\end{defn}

\begin{remark}
Any extended diassociative semigroup is an extended duplicial semigroup, but among the 10 axioms describing the compatibility between the arrows and the triangles in an extended diassociative semigroup (numbers 4 to 13 in \cite{Foi20}), only six of them survive in an EDuS (numbers 4, 5, 6, 7, 12 and 13).
\end{remark}
\ignore{
\noindent Similarly to Example 2.1 of Definition 2 by the first author~\cite{Foi20}, we give the following definition.
\begin{defn}\label{defn:eds}
Let $\Omega=(\Omega,\leftarrow,\rightarrow)$ be a duplicial semigroup. For $\alpha,\beta \in \Omega,$ we define two products on $\Omega$ by:
\begin{align*}
&\alpha \lhd \beta=\beta,&\alpha \rhd \beta=\alpha.
\end{align*}
Then $(\Omega,\leftarrow,\rightarrow,\lhd,\rhd)$ is an EDuS, denoted
by $\EDUS(\Omega,\leftarrow,\rightarrow)$. When $(\Omega,\leftarrow,\rightarrow)=\DUS(\Omega)$,
we shall simply write $\EDUS(\Omega)$.
\end{defn}
}
\begin{defn}\label{defn:dup}
Let $(\Omega,\leftarrow,\rightarrow,\lhd,\rhd)$ be an EDuS. A one-parameter $\Omega$-duplicial algebra is a family $\big(A,(\prec_\alpha)_{\alpha\in\Omega},(\succ_\alpha)_{\alpha\in\Omega}\big)$, where $A$ is a vector space and $\prec_\alpha,\succ_\alpha:A\ot A\ra A$ such that for any $x,y,z\in A$ and $\alpha,\beta\in\Omega,$
\begin{align}
\label{eq:du1}(x\prec_\alpha y)\prec_\beta z&=x\prec_{\alpha\la\beta}(y\prec_{\alpha\lhd\beta}z),\\
\label{eq:du2}x\succ_\alpha(y\prec_\beta z)&=(x\succ_\alpha y)\prec_\beta z,\\
\label{eq:du3}x\succ_\alpha(y\succ_\beta z)&=(x\succ_{\alpha\rhd\beta}y)\succ_{\alpha\ra\beta}z.
\end{align}
\end{defn}
\ignore{
\begin{remark}
In fact, $\Omega$-duplicial algebras are closely related to $\Omega$-dendriform algebras by the first author~\cite{Foi20}, where Eq.~(\ref{eq:du2}) is the same than Eq.~(42), but Eqs.~(\ref{eq:du1}) and~(\ref{eq:du3}) are precisely half of Eq.~(41) and~(43) in~\cite{Foi20}.
\end{remark}
}
Now we describe free one-parameter $\Omega$-duplicial algebras in terms of planar binary trees typed by $\Omega$, that is, for which each internal edge is typed by single element of $\Omega$. The set of $\Omega$-typed $X$-decorated planar binary trees is denoted by $\mathbf T(X,\Omega)$. We denote by $\mathbf T^+(X,\Omega)$ the set of $\Omega$-typed $X$-decorated planar binary trees different from the trivial tree $|$. For any $n\geq 0$, the set of $\Omega$-typed $X$-decorated planar binary trees with $n$ internal vertices (and $n+1$ leaves) is denoted by $\mathbf T_n(X,\Omega)$. So we have
\begin{align*}
\mathbf T(X,\Omega)&=\bigsqcup_{n\geqslant 0} \mathbf T_n(X,\Omega),&
\mathbf T^+(X,\Omega)&=\bigsqcup_{n\geqslant 1} \mathbf T_n(X,\Omega).
\end{align*}
For example,
\begin{align*}
\mathbf T_0(X,\Omega)&=\{|\},\ \
\mathbf T_1(X,\Omega)=\left\{\stree x\Bigm|x\in X
\right\},\ \
\mathbf T_2(X,\Omega)=\left\{
\XX{\xxr{-5}5
\node at (-0.4,0) {$\alpha$};
\xxhu00x \xxhu{-5}5y
}, \,
\XX{\xxl55
\node at (0.4,0) {$\alpha$};
\xxhu00x \xxhu55y
}\Bigm|x,y\in X,\alpha\in\Omega
\right\},\\
\mathbf T_3(X,\Omega)&=\left\{
\XX[scale=1.6]{\xxr{-4}4\xxr{-7.5}{7.5}
\node at (-0.35,0.1) {$\alpha$};
\node at (-0.75,0.4) {$\beta$};
\xxhu00{x} \xxhu[0.1]{-4}4{y} \xxhu[0.1]{-7.5}{7.5}{z}
}, \,
\XX[scale=1.6]{\xxl44\xxl{7.5}{7.5}
\node at (0.35,0.1) {$\alpha$};
\node at (0.7,0.4) {$\beta$};
\xxhu00{x} \xxhu[0.1]44{y} \xxhu[0.1]{7.5}{7.5}{z}
}, \,
\XX[scale=1.6]{\xxr{-6}6\xxl66
\node at (-0.5,0.15) {$\beta$};
\node at (0.45,0.15) {$\alpha$};
\xxhu00{x} \xxhu66{y} \xxhu{-6}6{z}
}, \,
\XX[scale=1.6]{\xxr{-5}5\xxl{-2}8
\node at (-0.4,0.1) {$\alpha$};
\node at (-0.2,0.55) {\tiny $\beta$};
\xxhu00x
\xxhu[0.1]{-5}5{y} \xxhu[0.1]{-2}8{z}
}, \,
\XX[scale=1.6]{\xxl55\xxr28
\node at (0.45,0.15) {$\alpha$};
\node at (0.22,0.5) {\tiny $\beta$};
\xxhu[0.1]00{x\,}
\xxhu[0.1]55{\,y} \xxhu[0.1]28{z}
},\ldots \Bigg|\,x,y,z\in X,\alpha,\beta\in\Omega\right\}.
\end{align*}

The {\bf depth} $\dep{(T)}$ of a rooted tree $T$ is the maximal length of linear chains of vertices from the root to the leaves of the tree.
For example,
$$\dep\biggl(\stree x\biggr) = 1\, \text{ and } \, \dep\biggl(\XX{\xxr{-5}5
\node at (-0.4,0) {$\alpha$};
\xxhu00x \xxhu{-5}5y
}\biggr) = 2.$$

\begin{defn}\label{defn:graft}
Let $T_1,T_2\in \mathbf T(X,\Omega)$, and $\alpha,\beta\in \Omega$. We denote by
$\displaystyle T_1 \Ve x,\alpha,\beta; T_2$ the tree $T\in \mathbf T(X,\Omega)$ obtained by grafting $T_1$ and $T_2$
on a common root. If $T_1\neq |$, the type of the internal edge between the root of $T$ and the root of $T_1$ is  $\alpha$.
If $T_2\neq |$, the type of  internal edge between the root of $T$ and the root of $T_2$ is $\beta$. We also decorate the new vertex by $x,x\in X.$
\end{defn}

\begin{remark}
Note that any element $T\in \mathbf T_n(X,\Omega)$, with $n\geq 1$, can be written under the form
\[T=T_1\Ve x,\alpha,\beta; T_2,\]
with $T_1,T_2\in \mathbf T(X,\Omega)$, $x\in X$ and $\alpha,\beta \in \Omega$. This writing is unique except if
$T_1=\mid$ or $T_2=\mid$: in this case, one can change arbitrarily $\alpha$ or $\beta$.
In order to solve this notational problem, we add an element denoted by $1$ to $\Omega$
and we shall always assume that if $T_1=\mid$, then $\alpha=1$;
if $T_2=\mid$, then $\beta=1$.
\end{remark}

\begin{defn}\label{def:freeDF}
Let $\Omega$ be a set with four products $\leftarrow, \rightarrow, \lhd, \rhd$.
We define binary operations $(\prec_\alpha,\succ_\alpha)_{\alpha\in\Omega}$ on $\bfk\mathbf T^+(X,\Omega)$ recursively on $\dep(T)+\dep(U)$ by
\begin{enumerate}
\item $ |\prec_\omega T:=T\succ_\omega|:=T$ for $\omega\in\Omega$ and $T\in \mathbf T^+(X,\Omega)$.\mlabel{it:intial1}
\item For $T=T_1\Ve x,\alpha_1,\alpha_2; T_2$ and $U=U_1\Ve y,\beta_1,\beta_2; U_2,$ define
\begin{align}
T\prec_\omega U:=& \ T_1\Ve x,\alpha_1,\alpha_2\la\omega; (T_2\prec_{\alpha_2\lhd\omega} U),\mlabel{eq:recur1}\\
T\succ_\omega U:=& \ (T\succ_{\omega\rhd\beta_1} U_1)\Ve y,\omega\ra\beta_1,\beta_2; U_2,\,\text{ where }\, \omega \in \Omega.\mlabel{eq:recur2}
\end{align}
\end{enumerate}
\mlabel{defn:recur}
\end{defn}
In the following, we employ the convention that
\begin{equation}
\label{eq:conv}\omega\rhd 1=1\lhd \omega=\omega\, \text{ and }\,\omega\ra 1=1\la\omega=\omega\, \text{ for }\,\omega\in\Omega.
\end{equation}

\begin{exam}\label{exam:dup}
Let $T=\stree x$ and $U=\stree y$ with $x,y\in X$. For $\omega\in \Omega,$ we have
\begin{align*}
T\prec_\omega U&=\stree x\prec_\omega\stree y=\Big(|\Ve x,1,1;|\Big)\prec_\omega \stree y\\
&=|\Ve x,1,1\la\omega;\Big(|\prec_{1\lhd\omega}\stree y\Big)\\
&=|\Ve x,1,\omega;\stree y
=\XX{\xxl55
\node at (0.4,0) {$\omega$};
\xxhu00x \xxhu55y
},\\
T\succ_\omega U&=\stree x\succ_\omega\stree y=\stree x\succ_\omega\Big(|\Ve y,1,1;|\Big)\\
&=\Big(\stree x\succ_{\omega\rhd 1}|\Big)\Ve y,\omega\ra 1,1;|\\
&=\stree x\Ve y,\omega,1;|
=\XX{\xxr{-5}5
\node at (-0.45,0) {$\omega$};
\xxhu00y \xxhu{-5}5{x}
}.
\end{align*}
\end{exam}

\begin{prop}\mlabel{prop:dup2}
Let $X$ be a set and let $(\Omega,\leftarrow,\rightarrow,\lhd,\rhd)$ be an EDuS.
Then $\big(\bfk\mathbf T^+(X,\Omega), (\prec_\omega,\succ_\omega)_{\omega\in\Omega}\big)$ is an $\Omega$-duplicial algebra.
\end{prop}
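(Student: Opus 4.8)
The plan is to prove that the three defining identities \eqref{eq:du1}, \eqref{eq:du2}, \eqref{eq:du3} of a one-parameter $\Omega$-duplicial algebra hold on $\bfk\mathbf T^+(X,\Omega)$, by induction on the total depth $\dep(T)+\dep(U)+\dep(V)$ of the three trees involved, exploiting the recursive definition of $\prec_\omega$ and $\succ_\omega$ from Definition~\ref{defn:recur}. The base cases are those in which at least one of the trees is the trivial tree $|$; these are handled directly by item~\ref{it:intial1} of the definition together with the convention \eqref{eq:conv}, and they reduce each identity to a triviality. So from now on write $T=T_1\Ve x,\alpha_1,\alpha_2; T_2$, $U=U_1\Ve y,\beta_1,\beta_2; U_2$, $V=V_1\Ve z,\gamma_1,\gamma_2; V_2$ and argue that each side, after one application of \eqref{eq:recur1}--\eqref{eq:recur2}, is a tree of the same shape whose pieces are governed by strictly smaller instances of the same three identities, so the induction hypothesis applies.

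Concretely, for \eqref{eq:du2} I would expand $x\succ_\alpha(y\prec_\beta z)$: first $U\prec_\beta V = U_1\Ve y,\beta_1,\beta_2\la\beta; (U_2\prec_{\beta_2\lhd\beta}V)$ by \eqref{eq:recur1}, then applying $T\succ_\alpha-$ via \eqref{eq:recur2} grafts onto $U_1$ on the left; on the other side $(x\succ_\alpha y)\prec_\beta z$ is $T\succ_\alpha U = (T\succ_{\alpha\rhd\beta_1}U_1)\Ve y,\alpha\ra\beta_1,\beta_2; U_2$ followed by $-\prec_\beta V$ via \eqref{eq:recur1}; comparing, the only nontrivial matching is the recursive piece $(T\succ_{\alpha\rhd\beta_1}U_1)$ versus $T\succ_{\alpha\rhd\beta_1}U_1$ — identical — so \eqref{eq:du2} for $(T,U,V)$ follows with no semigroup axiom needed beyond bookkeeping of indices. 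For \eqref{eq:du1}, expanding $(T\prec_\alpha U)\prec_\beta V$ and $T\prec_{\alpha\la\beta}(U\prec_{\alpha\lhd\beta}V)$ both produce a tree obtained by grafting $T_2$, then $U$, then $V$ successively on the right spine of $T$; matching the types on the two new edges and on the moved internal edges forces exactly the duplicial-semigroup associativity $(\alpha_2\la\alpha)\la\beta=\alpha_2\la(\alpha\la\beta)$ together with the EDuS axioms \eqref{eq:ext1} and \eqref{eq:ext2} (for the $\lhd$-indexed recursion parameter), plus an appeal to the induction hypothesis on the subtree sitting at $T_2$. Symmetrically, \eqref{eq:du3} is the left-handed mirror: expanding both sides via \eqref{eq:recur2} grafts successively on the left spine of $V$, and matching types forces $(\alpha\ra\beta)\ra\gamma_1$-type associativity together with the mirror EDuS axioms \eqref{eq:ext3} and \eqref{eq:ext4}.

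The main obstacle is purely organizational rather than conceptual: the recursive rules \eqref{eq:recur1}--\eqref{eq:recur2} move the types of \emph{all} internal edges of both arguments, so after two nested applications one must carefully track compositions like $\alpha_2\la(\alpha\la\beta)$ versus $(\alpha_2\la\alpha)\la\beta$ on the edges of $T_2$, $\beta_2\lhd(\alpha\lhd\beta)$-type expressions on the recursion index, and the images of the edges of $U$ and $V$, checking each against the relevant duplicial-semigroup or EDuS axiom. A clean way to manage this is to phrase the induction so that the recursive calls in the definition (e.g.\ $T_2\prec_{\alpha_2\lhd\omega}U$ inside $T\prec_\omega U$) are themselves instances of \eqref{eq:du1}--\eqref{eq:du3} at smaller depth, and to isolate the ``new edge'' contributions into a finite check that is exactly the list of EDuS axioms in Definition~\ref{defn:edsu}. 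I would present \eqref{eq:du2} in full (it is short and axiom-free), give \eqref{eq:du1} with the index computations displayed, and remark that \eqref{eq:du3} follows by the evident left-right symmetry exchanging $(\prec,\la,\lhd)$ with $(\succ,\ra,\rhd)$ and reversing the trees, leaving its verification to the reader.
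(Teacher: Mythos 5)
Your overall strategy coincides with the paper's: induction on $\dep(T)+\dep(U)+\dep(V)$, one unfolding of the recursions \eqref{eq:recur1}--\eqref{eq:recur2} on each side, the induction hypothesis applied to the inner recursive piece, and the EDuS axioms \eqref{eq:ext1}--\eqref{eq:ext2} (resp.\ \eqref{eq:ext3}--\eqref{eq:ext4}) together with the duplicial semigroup axioms to match the indices for \eqref{eq:du1} (resp.\ \eqref{eq:du3}). Your observation that \eqref{eq:du2} closes with no semigroup axiom, the two expansions being literally identical, is also exactly what happens in the paper's computation.

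The one step that would fail is your base case. The trivial tree $|$ is not an element of $\mathbf T^+(X,\Omega)$, so ``at least one of the trees is $|$'' is not a legitimate base case for this induction; worse, if you did allow it, the identities do \emph{not} reduce to trivialities. With $T=|$, item \ref{it:intial1} of Definition \ref{def:freeDF} gives $(|\prec_\alpha U)\prec_\beta V=U\prec_\beta V$, whereas $|\prec_{\alpha\la\beta}(U\prec_{\alpha\lhd\beta}V)=U\prec_{\alpha\lhd\beta}V$, and these differ unless $\alpha\lhd\beta=\beta$. The correct base case is the one the paper takes: all three trees are single corollas (total depth $3$), and even that requires the short explicit computation via Example \ref{exam:dup} rather than being free. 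Relatedly, in the induction step the \emph{subtrees} $T_2$, $U_1$, $V_1$ may well be trivial; there it is the convention \eqref{eq:conv}, with the corresponding edge types forced to equal $1$, that makes the index bookkeeping degenerate correctly --- this is where $|$ legitimately enters, not as an outer argument. With the base case repaired in this way, the rest of your plan reproduces the paper's proof.
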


\begin{proof}
Let $$T=T_1\Ve x,\alpha_1,\alpha_2; T_2, U=U_1\Ve y,\beta_1,\beta_2; U_2, W=W_1\Ve z,\gamma_1,\gamma_2; W_2\in \bfk\mathbf T^+(X,\Omega).$$
 Then we apply induction on $\dep(T)+\dep(U)+\dep(W)\geq 3.$ For the initial step $\dep(T)+\dep(U)+\dep(W)=3$, we have $$T=\stree x, U=\stree y\,\text{ and }\, W=\stree z$$ and so
 \begin{align*}
 (T\prec_\alpha U)\prec_\beta W&=\Big(\stree x\prec_\alpha \stree y\Big)\prec_\beta\stree z\\
 &=\XX{\xxl55
\node at (0.4,0) {$\alpha$};
\xxhu00x \xxhu55y
}\prec_\beta \stree z\quad(\text{ by Example~\ref{exam:dup} })\\
&=\Big(|\Ve x,1,\alpha;\stree y\Big)\prec_\beta\stree z\\
&=|\Ve x,1,\alpha\la\beta;\Big(\stree y\prec_{\alpha\lhd\beta}\stree z\Big)\quad(\text{by Eq.~(\ref{eq:recur1})})\\
&=|\Ve x,1,\alpha\la\beta;\XX{\xxl55
\node at (0.75,0) {$\alpha\lhd\beta$};
\xxhu00y \xxhu55z
}\quad(\text{ by Example~\ref{exam:dup} })\\
&=\stree x\prec_{\alpha\la\beta}\XX{\xxl55
\node at (0.75,0) {$\alpha\lhd\beta$};
\xxhu00y \xxhu55z
}\quad(\text{by Eq.~(\ref{eq:recur1})})\\
&=\stree x\prec_{\alpha\la\beta}\Big(\stree y\prec_{\alpha\lhd\beta}\stree z\Big)\\
&=T\prec_{\alpha\la\beta}(U\prec_{\alpha\lhd\beta}W),
 \end{align*}
 verifying Eq.~(\mref{eq:du1}). Next,
\begin{align*}
T\succ_\alpha(U\prec_\beta W)&=\stree x\succ_\alpha\Big(\stree y\prec_\beta\stree z\Big)\\
&=\stree x\succ_\alpha \XX{\xxl55
\node at (0.4,0) {$\beta$};
\xxhu00y \xxhu55z
}\quad(\text{ by Example~\ref{exam:dup} })\\
&=\stree x\succ_\alpha\Big(|\Ve y,1,\beta;\stree z\Big)\\
&=\Big(\stree x\succ_{\alpha\rhd 1}|\Big)\Ve y,\alpha\ra 1,\beta;\stree z\quad(\text{by Eq.~(\ref{eq:recur2})})\\
&=\stree x\Ve y,\alpha,\beta;\stree z\quad(\text{by Eq.~(\ref{eq:conv})})\\
&=\stree x\Ve y,\alpha,1\la\beta;\Big(|\prec_{1\lhd\beta}\stree z\Big)\quad(\text{by Item~\ref{it:intial1} of Definition~\ref{def:freeDF} and Eq.~(\ref{eq:conv})})\\
&=\Big(\stree x\Ve y,\alpha,1;|\Big)\prec_\beta\stree z\quad(\text{by Eq.~(\ref{eq:recur1})})\\
&=\XX{\xxr{-5}5
\node at (-0.45,0) {$\alpha$};
\xxhu00y \xxhu{-5}5{x}
}\prec_\beta \stree z
=\Big(\stree x\succ_\alpha\stree y\Big)\prec_\beta\stree z\\
&=(T\succ_\alpha U)\prec_\beta W,
\end{align*}
verifying Eq.~(\mref{eq:du2}). Finally,
\begin{align*}
T\succ_\alpha(U\succ_\beta W)&=\stree x\succ_\alpha\Big(\stree y\succ_\beta \stree z\Big)\\
&=\stree x\succ_\alpha \XX{\xxr{-5}5
\node at (-0.45,0) {$\beta$};
\xxhu00z \xxhu{-5}5{y}
}\quad(\text{ by Example~\ref{exam:dup} })\\
&=\stree x\succ_\alpha\Big(\stree y\Ve z,\beta,1;|\Big)\\
&=\Big(\stree x\succ_{\alpha\rhd\beta}\stree y\Big)\Ve z,\alpha\ra\beta,1;|\quad(\text{by Eq.~(\ref{eq:recur2})})\\
&=\XX{\xxr{-5}5
\node at (-1,0) {$\alpha\rhd\beta$};
\xxhu00y \xxhu{-5}5{x}}\Ve z,\alpha\ra\beta,1;|\quad(\text{ by Example~\ref{exam:dup} })\\
&=\Big(\XX{\xxr{-5}5
\node at (-1,0) {$\alpha\rhd\beta$};
\xxhu00y \xxhu{-5}5{x}
}\succ_{(\alpha\ra\beta)\rhd 1}|\Big)\Ve z,(\alpha\ra\beta)\ra 1,1;|\quad(\text{by Eq.~(\ref{eq:conv})})\\
&=\XX{\xxr{-5}5
\node at (-1,0) {$\alpha\rhd\beta$};
\xxhu00y \xxhu{-5}5{x}
}\succ_{\alpha\ra\beta}\Big(|\Ve z,1,1;|\Big)
\quad(\text{by Item~\ref{it:intial1} of Definition~\ref{def:freeDF} and Eq.~(\ref{eq:conv})})\\
&=\XX{\xxr{-5}5
\node at (-1,0) {$\alpha\rhd\beta$};
\xxhu00y \xxhu{-5}5{x}
}\succ_{\alpha\ra\beta}\stree z\\
&=\Big(\stree x\succ_{\alpha\rhd\beta}\stree y\Big)\succ_{\alpha\ra\beta}\stree z\quad(\text{by Example~\ref{exam:dup} })\\
&=(T\succ_{\alpha\rhd \beta} U)\succ_{\alpha\ra\beta} W.
\end{align*}
This completes the proof of the initial step. For the induction step of $\dep(T)+\dep(U)+\dep(W)=k+1\geq 4$. First, we have
\begin{align*}
(T\prec_\alpha U)\prec_\beta W&=\Big((T_1\Ve x,\alpha_1,\alpha_2;T_2)\prec_\alpha U\Big)\prec_\beta W\\
&=\Big(T_1\Ve x,\alpha_1,\alpha_2\la\alpha);(T_2\prec_{\alpha_2\lhd \alpha } U)\Big)\prec_\beta W\quad(\text{by Eq.~(\ref{eq:recur1})})\\
&=T_1\Ve x,\alpha_1,(\alpha_2\la\alpha)\la\beta;\Big((T_2\prec_{\alpha_2\lhd\alpha}U)
\prec_{(\alpha_2\la\alpha)\lhd\beta} W\Big)\quad(\text{by Eq.~(\ref{eq:recur1})})\\
&=T_1\Ve x,\alpha_1,(\alpha_2\la\alpha)\la\beta;\Big(T_2\prec_{(\alpha_2\lhd\alpha)\la
\big((\alpha_2\la\alpha)\lhd\beta\big)}(U\prec_{(\alpha_2\lhd\alpha)\lhd
\big((\alpha_2\la\alpha)\lhd\beta\big)}W)\\
&\hspace{2cm}(\text{by the induction hypothesis})\\
&=T_1\Ve x,\alpha_1,\alpha_2\la(\alpha\la\beta);
\Big(T_2\prec_{\alpha_2\lhd(\alpha\la\beta)}(U\prec_{\alpha\lhd\beta}W)\Big)\\
&\hspace{2cm}(\text{by Eqs.~(\ref{eq:ext1}-\ref{eq:ext2}) and definition \eqref{defn:dus}})\\
&=(T_1\Ve x,\alpha_1,\alpha_2;T_2)\prec_{\alpha\la\beta}(U\prec_{\alpha\lhd\beta}W)
\quad(\text{by Eq.~(\ref{eq:recur1})})\\
&=T\prec_{\alpha\la\beta}(U\prec_{\alpha\lhd\beta}W).
\end{align*}
Second, we have
\begin{align*}
T\succ_\alpha(U\prec_\beta W)&=T\succ_\alpha\Big((U_1\Ve y,\beta_1,\beta_2;U_2)\prec_\beta W\Big)\\
&=T\succ_\alpha\Big(U_1\Ve y,\beta_1,\beta_2\la\beta;(U_2\prec_{\beta_2\lhd\beta}W)\Big)\quad(\text{by Eq.~(\ref{eq:recur2})})\\
&=(T\succ_{\alpha\rhd\beta_1}U_1)\Ve y,\alpha\ra\beta_1,\beta_2\la\beta;(U_2\prec_{\beta_2\lhd\beta}W)\\
&=\Big((T\succ_{\alpha\rhd\beta_1}U_1)\Ve y,\alpha\ra\beta_1,\beta_2;U_2\Big)\prec_\beta W\quad(\text{by Eq.~(\ref{eq:recur1})})\\
&=\Big(T\succ_\alpha(U_1\Ve y,\beta_1,\beta_2;U_2)\Big)\prec_\beta W\\
&=(T\succ_\alpha U)\prec_\beta W.
\end{align*}
Last, we have
\begin{align*}
T\succ_\alpha(U\succ_\beta W)&=T\succ_\alpha\Big(U\succ_\beta(W_1\Ve z,\gamma_1,\gamma_2;W_2)\Big)\\
&=T\succ_\alpha\Big((U\succ_{\beta\rhd\gamma_1}W_1)\Ve z,\beta\ra\gamma_1,\gamma_2;W_2\Big)\quad(\text{by Eq.~(\ref{eq:recur2})})\\
&=\Big(T\succ_{\alpha\rhd(\beta\ra\gamma_1)}(U\succ_{\beta\rhd\gamma_1}W_1)\Big)
\Ve z,\alpha\ra(\beta\ra\gamma_1),\gamma_2;W_2\quad(\text{by Eq.~(\ref{eq:recur2})})\\
&=\Big(T\succ_{(\alpha\rhd(\beta\ra\gamma_1))\rhd(\beta\rhd\gamma_1)}U\Big)
\succ_{(\alpha\rhd(\beta\ra\gamma_1))\ra(\beta\rhd\gamma_1)}W_1\Ve z,\alpha\ra(\beta\ra\gamma_1),\gamma_2;W_2\\
&\hspace{2cm}\text{(by the induction hypothesis)}\\
&=\Big((T\succ_{\alpha\rhd\beta}U)\succ_{(\alpha\ra\beta)\ra\gamma_1}W_1\Big)
\Ve z,(\alpha\ra\beta)\ra\gamma_1,\gamma_2;W_2\\
&\hspace{2cm}(\text{by Eqs.~(\ref{eq:ext3}-\ref{eq:ext4}) and Definition \eqref{defn:dus}})\\
&=(T\succ_{\alpha\rhd\beta}U)\succ_{\alpha\ra\beta}(W_1\Ve z,\gamma_1,\gamma_2;W_2)\\
&=(T\succ_{\alpha\rhd\beta}U)\succ_{\alpha\ra\beta}W.
\end{align*}
This completes the proof.
\end{proof}
\ignore{
\noindent There is a converse of Proposition~\ref{prop:dup}:

\begin{coro}\label{coro:dup}
Let $X$ be a set and let
$(\bfk\mathbf T^+(X,\Omega), (\prec_\omega,\succ_\omega)_{\omega\in\Omega})$ where the binary products $\prec_\omega$ and $\succ_\omega$ are given by Definition \ref{def:freeDF}. If this defines an $\Omega$-duplicial algebra, then $(\Omega,\leftarrow,\rightarrow,\lhd,\rhd)$ is an EDuS. Namely,
$(\bfk\mathbf T^+(X,\Omega), (\prec_\omega,\succ_\omega)_{\omega\in\Omega})$ is a $\Omega$-duplicial algebra, if and only if, $(\Omega,\leftarrow,\rightarrow,\lhd,\rhd)$ is an EDuS.
\end{coro}
\begin{proof}
All axioms of an EDuS have been used to prove Proposition~\ref{prop:dup}.
\end{proof}
\dominique{Actually... Where do you use axioms (21) and (22)?}
}
\begin{defn}
 Let $X$ be a set and let $(\Omega,\leftarrow,\rightarrow,\lhd,\rhd)$ be an EDuS.
 A free $\Omega$-duplicial algebra on $X$ is an $\Omega$-duplicial algebra $\big(D, (\prec_\omega,\succ_\omega)_{\omega\in\Omega}\big)$ together with a map $j: X\rightarrow D$ that satisfies the following universal property:
 for any $\Omega$-duplicial algebra $\big(D', (\prec'_\omega,\succ'_\omega)_{\omega\in\Omega}\big)$
and map $f: X\rightarrow D',$ there is a unique $\Omega$-duplicial algebra morphism $\bar{f}:
D\rightarrow D'$ such that $f=\bar{f}\circ j.$ The free $\Omega$-duplicial algebra on $X$ is unique up to isomorphism.
\end{defn}

\noindent Let $j: X \ra \bfk\mathbf T^+(X,\Omega)$ be the map defined by $j(x)=\stree x$ for $x\in X$.
\begin{theorem}\mlabel{thm:freedup}
Let $X$ be a set and let $(\Omega,\leftarrow,\rightarrow,\lhd,\rhd)$ be an EDuS.
Then $\big(\bfk\mathbf T^+(X,\Omega),(\prec_\omega,\succ_\omega)_{\omega\in\Omega}\big)$, together with the map $j$, is the free $\Omega$-duplicial algebra on $X$.
\end{theorem}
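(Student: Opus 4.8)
The plan is to establish the universal property by the standard method for free algebras over planar-binary-tree-type datasets: first exhibit the generating property, then construct and verify the unique extension of a map $f\colon X\to D'$. The key structural input is the observation (implicit in Definition \ref{def:freeDF} and Example \ref{exam:dup}) that every tree $T\in\mathbf T^+(X,\Omega)$ can be built from the single-vertex trees $\stree x$, $x\in X$, using the operations $\prec_\omega$ and $\succ_\omega$; indeed if $T=T_1\Ve x,\alpha_1,\alpha_2;T_2$ then one recovers $T$ from smaller trees via the recursive formulas \eqref{eq:recur1}--\eqref{eq:recur2}, using the conventions \eqref{eq:conv} and Item \ref{it:intial1} of Definition \ref{def:freeDF} to handle the cases $T_1=|$ or $T_2=|$.

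First I would make precise a decomposition lemma: for $T\in\mathbf T_n^+(X,\Omega)$ with $n\geq 1$, writing $T=T_1\Ve x,\alpha_1,\alpha_2;T_2$, one has (i) if $T_1\neq|$, then $T=(\text{root of }T_1\text{ expression})\succ_{?}(\cdots)$, and more usefully one checks that $\stree x\succ_{\alpha_1} T_1$ prepends $T_1$ on the left and $(\cdots)\prec_{\alpha_2}T_2$ grafts $T_2$ on the right, so that by induction on $\dep(T)$ every $T$ lies in the subalgebra generated by $j(X)$. This gives the spanning statement: $\bfk\mathbf T^+(X,\Omega)$ is generated as an $\Omega$-duplicial algebra by $j(X)$, which forces uniqueness of any morphism $\bar f$ extending $f$. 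Then I would define $\bar f\colon \bfk\mathbf T^+(X,\Omega)\to D'$ by induction on $\dep(T)$: set $\bar f(\stree x)=f(x)$, and for $T=T_1\Ve x,\alpha_1,\alpha_2;T_2$ define $\bar f(T)$ by decomposing $T$ as a product of strictly shallower trees via the recursion above and applying the corresponding $\prec'$, $\succ'$ to the already-defined values; linearity extends this to all of $\bfk\mathbf T^+(X,\Omega)$. One must check this is well-defined, i.e. independent of the chosen decomposition — here the normalization convention (if $T_1=|$ then $\alpha_1=1$, etc.) together with \eqref{eq:conv} makes the decomposition canonical, so well-definedness is essentially automatic.

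The substantive step is verifying that $\bar f$ is a morphism, i.e. $\bar f(T\prec_\omega U)=\bar f(T)\prec'_\omega\bar f(U)$ and similarly for $\succ$. The natural approach is induction on $\dep(T)+\dep(U)$, mirroring exactly the bookkeeping in the proof of Proposition \ref{prop:dup2}: one unfolds $T\prec_\omega U$ using \eqref{eq:recur1}, applies the inductive definition of $\bar f$ together with the induction hypothesis, and then reassembles using the $\Omega$-duplicial axioms \eqref{eq:du1}--\eqref{eq:du3} in $D'$ to recognize $\bar f(T)\prec'_\omega\bar f(U)$. Because $\bfk\mathbf T^+(X,\Omega)$ is itself an $\Omega$-duplicial algebra (Proposition \ref{prop:dup2}) and $\bar f$ is defined precisely by the recursion that presents the operations, this verification reduces to a routine diagram-chase of the same shape as the three computations in the proof of Proposition \ref{prop:dup2}, carried out one level up.

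I expect the main obstacle to be the well-definedness and coherence of the inductive definition of $\bar f$, rather than the morphism check: one has to be careful that the several ways a given tree $T$ arises as a value $S\prec_\omega R$ or $S\succ_\omega R$ (with $\dep(S)+\dep(R)<\dep(T)$, necessarily) all produce the same element of $D'$, and this ultimately rests on the uniqueness-of-decomposition remark following Definition \ref{defn:graft} together with the EDuS axioms being exactly what is needed to make \eqref{eq:recur1}--\eqref{eq:recur2} consistent. Once $\bar f$ is seen to be a well-defined linear map satisfying $\bar f\circ j=f$ and compatible with $\prec_\omega,\succ_\omega$, uniqueness follows from the spanning statement, completing the proof. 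A clean way to organize this is to define $\bar f$ on a tree $T$ directly by a global formula — reading off from $T$ the unique word in $\prec,\succ$ that builds it from leaves — rather than by a recursion with choices, which sidesteps the coherence issue entirely and leaves only the (inductive) morphism verification.
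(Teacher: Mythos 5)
Your proposal is correct and follows essentially the same route as the paper: define $\bar f$ on a tree via its canonical root decomposition $T=T_1\Ve x,\alpha_1,\alpha_2;T_2$ (which, with the normalization convention for $|$, removes any well-definedness issue), verify the morphism property by induction on $\dep(T)+\dep(U)$ using the axioms \eqref{eq:du1}--\eqref{eq:du3} in the target, and deduce uniqueness from the fact that every tree is generated from $j(X)$ by the operations. The ``global formula'' you propose at the end is exactly the paper's definition $\bar f(T)=\bigl(\bar f(T_1)\succ'_{\alpha_1}f(x)\bigr)\prec'_{\alpha_2}\bar f(T_2)$, so the coherence worry you flag does not arise.
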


\begin{proof}
By Proposition~\ref{prop:dup}, we are left to show that $\big(\bfk\mathbf T^+(X,\Omega),(\prec_\omega,\succ_\omega)_{\omega\in\Omega}\big)$ satisfies the universal property. For this, let $\big(D,(\prec'_\omega,\succ'_\omega)_{\omega\in\Omega}\big)$ be an $\Omega$-duplicial algebra. 

\noindent Now let us define a linear map 
\[\bar{f}:\left\{\begin{array}{rcl}
\bfk\mathbf T^+(X,\Omega)&\rightarrow& D\\
T&\mapsto&\bar{f}(T)
\end{array}\right.\]
by induction on $\dep(T)\geq 1$.
Let us write $T=T_1\Ve x,\alpha_1,\alpha_2;T_2$ with $x\in X$ and $\alpha_1,\alpha_2\in\Omega$. For the initial step $\dep(T)=1$, we have $T=\stree x$ for some $x\in X$ and define
\begin{equation}
\bar{f}(T):=f(x).
\label{eq:init}
\end{equation}
We define $\bar{f}(T)$ by the induction on $\dep(T)=k+1\geq 2$.
Note that $T_1$ and $T_2$ can not be $|$ simultaneously and define
\begin{align}\label{eq:induc}
\bar{f}(T):=& \  \bar{f}(T_1\Ve x,\alpha_1,\alpha_2;T_2)\nonumber\\
:=& \ \left\{
\begin{array}{ll}
f(x)\prec'_{\alpha_2}\bar{f}(T_2), & \ \text{ if } T_1=|\neq T_2;\\
\bar{f}(T_1)\succ'_{\alpha_1} f(x), & \ \text{ if } T_1\neq |=T_2;\\
\bigl(\bar{f}(T_1)\succ'_{\alpha_1}f(x)\bigr)\prec'_{\alpha_2}\bar{f}(T_2), & \ \text{ if }T_1\neq |\neq T_2.
\end{array}
\right .
\end{align}

\noindent We are left to prove that $\bar{f}$ is a morphism of $\Omega$-duplicial  algebras:
 \begin{equation*}
\bar{f}(T\prec_\omega U)=\bar{f}(T)\prec'_\omega\bar{f}(U)\,\text{ and }\,
\bar{f}(T\succ_\omega U)=\bar{f}(T)\succ'_\omega\bar{f}(U),
\label{eq:comff}
\end{equation*}
in which we only prove the first equation by induction on $\dep(T)+\dep(U)\geq 2$,
as the proof of the second one is similar.
Write
$$T=_1\Ve x,\alpha_1,\alpha_2;T_2\,\text{ and }\, U=U_1\Ve y,\beta_1,\beta_2;U_2.$$
For the initial step $\dep(T)+\dep(U)=2$, we have $T=\stree x$ and $U=\stree y$ for some $x,y\in X$.
So we have
\begin{align*}
\bar{f}(T\prec_\omega U)&=\bar{f}\Big(\stree x\prec_\omega\stree y\Big)=\bar{f}\Big(\XX{\xxl55
\node at (0.4,0) {$\omega$};
\xxhu00x \xxhu55y
}\Big)\quad(\text{ by Example~\ref{exam:dup} })\\
&=\bar{f}\Big(|\Ve x,1,\omega;\stree y\Big)\\
&=f(x)\prec'_\omega\bar{f}\Big(\stree y\Big)\quad(\text{by Eq.~(\ref{eq:induc})})\\
&=\bar{f}\Big(\stree x\Big)\prec'_\omega\bar{f}\Big(\stree y\Big)\quad(\text{by Eq.~(\ref{eq:init})})\\
&=\bar{f}(T)\prec'_\omega\bar{f}(U).
\end{align*}
For the induction step of $\dep(T)+\dep(U)\geq 3,$ we have four cases to consider.

{\noindent \bf Case 1:} $T_1=|$ and $T_2=|.$ Then
\begin{align*}
\bar{f}(T\prec_\omega U)&=\bar{f}\Big((|\Ve x,1,1;|)\prec_\omega U\Big)\\
&=\bar{f}\Big(|\Ve x,1,1\la\omega;(|\prec_{1\lhd\omega}U)\Big)\quad(\text{by Eq.~(\ref{eq:recur1})})\\
&=\bar{f}(|\Ve x,1,\omega;U)=f(x)\prec'_\omega\bar{f}(U)\quad(\text{by Eq.~(\ref{eq:induc})})\\
&=\bar{f}(T)\prec'_\omega\bar{f}(U).
\end{align*}

{\noindent \bf Case 2:} $T_1=|$ and $T_2\neq |.$ Then
\begin{align*}
\bar{f}(T\prec_\omega U)&=\bar{f}\Big((|\Ve x,1,\alpha_2;T_2)\prec_\omega U\Big)\\
&=\bar{f}\Big(|\Ve x,1,\alpha_2\la\omega;(T_2\prec_{\alpha\lhd\omega}U\Big)\quad(\text{by Eq.~(\ref{eq:recur1})})\\
&=f(x)\prec'_{\alpha\la\omega}\bar{f}(T_2\prec_{\alpha_2\lhd\omega}U)\quad(\text{by Eq.~(\ref{eq:induc})})\\
&=f(x)\prec'_{\alpha\la\omega}\Big(\bar{f}(T_2)\prec'_{\alpha_2\lhd\omega}\bar{f}(U)\Big)
\quad(\text{by the induction hypothesis})\\
&=\Big(f(x)\prec'_{\alpha_2}\bar{f}(T_2)\Big)\prec'_\omega\bar{f}(U)\quad(\text{by Eq.~(\ref{eq:du1})})\\
&=\bar{f}(T)\prec'_\omega\bar{f}(U)\quad(\text{by Eq.~(\ref{eq:induc})}).
\end{align*}

{\noindent \bf Case 3:} $T_1\neq|$ and $T_2=|.$ This case is similar to Case 2.

{\noindent \bf Case 4:} $T_1\neq|$ and $T_2\neq |.$ Then
\begin{align*}
\bar{f}(T\prec_\omega U)&=\bar{f}\Big((T_1\Ve x,\alpha_1,\alpha_2;T_2)\prec_\omega U\Big)\\
&=\bar{f}\Big(T_1\Ve x,\alpha_1,\alpha_2\la\omega;(T_2\prec_{\alpha_2\lhd\omega} U)\Big)\quad(\text{by Eq.~(\ref{eq:recur1})})\\
&=\Big(\bar{f}(T_1)\succ'_{\alpha_1}f(x)\Big)
\prec'_{\alpha_2\la\omega}\bar{f}(T_2\prec_{\alpha_2\lhd\omega}U)\quad(\text{by Eq.~(\ref{eq:induc})})\\
&=\Big(\bar{f}(T_1)\succ'_{\alpha_1}f(x)\Big)
\prec'_{\alpha_2\la\omega}\Big(\bar{f}(T_1)\prec'_{\alpha_2\lhd\omega}\bar{f}(U)\Big)
\quad(\text{by the induction hypothesis})\\
&=\bar{f}(T_1)\succ'_{\alpha_1}\Big(f(x)\prec'_{\alpha_2\la\omega}
(\bar{f}(T_2)\prec'_{\alpha_2\lhd\omega}\bar{f}(U))\Big)\quad(\text{by Eq.~(\ref{eq:du2})})\\
&=\bar{f}(T_1)\succ'_{\alpha_1}
\Big((f(x)\prec'_{\alpha_2}\bar{f}(T_2))\prec'_\omega\bar{f}(U)\Big)\quad(\text{by Eq.~(\ref{eq:du1})})\\
&=\Big(\bar{f}(T_1)\succ'_{\alpha_1}(f(x)\prec'_{\alpha_2}\bar{f}(T_2)\Big)
\prec'_\omega\bar{f}(U)\quad(\text{by Eq.~(\ref{eq:du2})})\\
&=\Big(\Big(\bar{f}(T_1)\succ'_{\alpha_1}f(x)\Big)\prec'_{\alpha_2}\bar{f}(T_2)\Big)\prec'_\omega
\bar{f}(U)\quad(\text{by Eq.~(\ref{eq:du2})})\\
&=\bar{f}(T)\prec'_\omega\bar{f}(U)\quad(\text{by Eq.~(\ref{eq:induc})}).
\end{align*}

Let us prove the uniqueness of $\bar{f}.$ Let $\bar{g}$ be another morphism from $\bfk\mathbf T^+(X,\Omega)$ to $D$ such that $\bar{g}\Big(\stree x\Big)=f(x).$
First, for any $a\in D$ and $\omega\in\Omega$, we define
\begin{align*}
a\succ_\omega 1=1\prec_\omega a&=0,& 1\succ_\omega a=a\prec_\omega 1&=a.
\end{align*}
For any $T\neq |$, let $T=T_1\Ve x,\alpha_1,\alpha_2;T_2.$ In fact, the form $T=T_1\succ_{\alpha_1}\stree x\prec_{\alpha_2}T_2$ include all the above four cases. We define
\[\bar{g}(|)=1\]
and
\[\bar{g}(T)=\bar{g}\Big(T_1\succ_{\alpha_1}\stree x\prec_{\alpha_2}T_2\Big)=\bar{g}(T_1)\succ'_{\alpha_1}f(x)\prec'_{\alpha_2}\bar{g}(T_2).\]
So $\bar{f}=\bar{g}.$
This completes the proof.
\end{proof}

\begin{prop}
Let $\Omega$ be an EDuS. Then
$\big(\bfk\mathbf T^+(X,\Omega)\ot\bfk\Omega,\prec,\succ\big)$ is a duplicial algebra, if and only if,  $\big(\bfk\mathbf T^+(X,\Omega), (\prec_\omega,\succ_\omega)_{\omega\in\Omega}\big)$ is an $\Omega$-duplicial algebra, where
\begin{align*}
(x\ot\alpha)\prec(y\ot\beta):=&(x\prec_{\alpha\lhd\beta} y)\ot(\alpha\la\beta)\\
 (x\ot\alpha)\succ(y\ot\beta):=& (x\succ_{\alpha\rhd\beta} y)\ot(\alpha\ra\beta),\,
\text{ for }\, x,y\in \mathbf T^+(X,\Omega) \text{ and } \alpha,\beta\in \Omega.
\end{align*}
\end{prop}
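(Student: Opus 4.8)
The plan is to treat the two directions separately; only ``$\Leftarrow$'' carries real content.

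The direction ``$\Rightarrow$'' is immediate: since $\Omega$ is assumed to be an EDuS, Proposition~\ref{prop:dup2} already asserts that $\big(\bfk\mathbf T^+(X,\Omega),(\prec_\omega,\succ_\omega)_{\omega\in\Omega}\big)$ is an $\Omega$-duplicial algebra, so there is nothing to check. (If one wanted an argument drawn purely from the graded structure, in the style of the proof of Proposition~\ref{propequivalence}, one would apply $\Id\otimes f$ with $f:\bfk\Omega\to\bfk$ the linear map $\delta\mapsto 1$ to each duplicial axiom of $\bfk\mathbf T^+(X,\Omega)\otimes\bfk\Omega$; but this only recovers the $\Omega$-duplicial relations pre-composed with the maps $\lhd,\rhd,\leftarrow,\rightarrow$, so Proposition~\ref{prop:dup2} is what is genuinely used.)

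For ``$\Leftarrow$'', assume $\big(\bfk\mathbf T^+(X,\Omega),(\prec_\omega,\succ_\omega)_{\omega\in\Omega}\big)$ is an $\Omega$-duplicial algebra. The cleanest route is to observe first that any one-parameter $\Omega$-duplicial algebra over an EDuS $\Omega$ is, in particular, a two-parameter $\Omega$-duplicial algebra over the underlying duplicial semigroup $(\Omega,\leftarrow,\rightarrow)$ once one sets $\prec_{\alpha,\beta}:=\prec_{\alpha\lhd\beta}$ and $\succ_{\alpha,\beta}:=\succ_{\alpha\rhd\beta}$: checking \eqref{eq:duprec} reduces to \eqref{eq:du1} followed by the EDuS axioms \eqref{eq:ext1} and \eqref{eq:ext2}, checking \eqref{eq:duprsu} reduces to \eqref{eq:du2} together with \eqref{eq:ext0} and \eqref{eq:ext00}, and checking \eqref{eq:dusucc} reduces to \eqref{eq:du3} together with \eqref{eq:ext3} and \eqref{eq:ext4}. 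One then invokes the implication $(ii)\Rightarrow(i)$ of Proposition~\ref{dupropequivalence}-\ref{dupropequivalence-b}: the twisted products \eqref{grad-one}--\eqref{grad-two} built from $(\prec_{\alpha,\beta},\succ_{\alpha,\beta})_{\alpha,\beta\in\Omega}$ coincide, term by term, with the operations $\prec$ and $\succ$ of the present statement (since $\prec_{\alpha,\beta}=\prec_{\alpha\lhd\beta}$ and $\succ_{\alpha,\beta}=\succ_{\alpha\rhd\beta}$), whence $\big(\bfk\mathbf T^+(X,\Omega)\otimes\bfk\Omega,\prec,\succ\big)$ is a duplicial algebra.

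Alternatively, ``$\Leftarrow$'' can be done by verifying the three duplicial axioms for $\bfk\mathbf T^+(X,\Omega)\otimes\bfk\Omega$ directly, splitting each identity (with $a=x\otimes\alpha$, $b=y\otimes\beta$, $c=z\otimes\gamma$) into its $\bfk\mathbf T^+(X,\Omega)$-component and its $\bfk\Omega$-component: the $\bfk\Omega$-components reduce to the three associativities of Definition~\ref{defn:dus}, while the $\bfk\mathbf T^+(X,\Omega)$-components reduce, respectively, to \eqref{eq:du1} with \eqref{eq:ext1}--\eqref{eq:ext2}, to \eqref{eq:du2} with \eqref{eq:ext0} and \eqref{eq:ext00}, and to \eqref{eq:du3} with \eqref{eq:ext3}--\eqref{eq:ext4}. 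This is the same bookkeeping as in the induction step of the proof of Proposition~\ref{prop:dup2}. In either approach the computation is routine; the only thing to keep track of is the exact matching of each duplicial (resp.\ duplicial-semigroup) identity with the $\Omega$-duplicial axiom (resp.\ EDuS compatibility) it comes from, so I foresee no genuine obstacle beyond this bookkeeping.
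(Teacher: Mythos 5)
Your proof is correct, and your preferred route is genuinely different from the paper's. The paper proves the ``if'' direction exactly as in your second, alternative route: it verifies the three duplicial axioms on $\bfk\mathbf T^+(X,\Omega)\otimes\bfk\Omega$ directly, using \eqref{eq:du1} with \eqref{eq:ext1}--\eqref{eq:ext2}, then \eqref{eq:du2} with \eqref{eq:ext0}--\eqref{eq:ext00}, then \eqref{eq:du3} with \eqref{eq:ext3}--\eqref{eq:ext4}, together with the duplicial semigroup axioms of Definition~\ref{defn:dus} for the $\bfk\Omega$-components. Your primary route instead factors this computation through the two-parameter theory: you first check that $\prec_{\alpha,\beta}:=\prec_{\alpha\lhd\beta}$, $\succ_{\alpha,\beta}:=\succ_{\alpha\rhd\beta}$ satisfy \eqref{eq:duprec}--\eqref{eq:dusucc} (the same index manipulations, but performed once at the level of the abstract algebra), and then invoke $(ii)\Rightarrow(i)$ of Proposition~\ref{dupropequivalence}-\ref{dupropequivalence-b}, noting that the twisted products \eqref{grad-one}--\eqref{grad-two} then coincide with the $\prec,\succ$ of the statement. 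This is more modular and makes visible the general fact that a one-parameter $\Omega$-duplicial algebra over an EDuS is a two-parameter one over the underlying duplicial semigroup; the paper's direct computation is self-contained but repeats bookkeeping already present elsewhere. On the ``only if'' direction, your observation that it is vacuous --- since $\Omega$ is assumed to be an EDuS, Proposition~\ref{prop:dup2} already yields the $\Omega$-duplicial structure unconditionally --- is cleaner than the paper's one-line remark that ``all axioms of an EDuS have been used in the proof,'' which is really an argument about the necessity of the EDuS axioms rather than a proof of this implication.
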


\begin{proof}
For $x,y,z$ in the $\Omega$-duplicial algebra $\mathbf T^+(X,\Omega)$ and for $\alpha,\beta,\gamma\in\Omega,$ first, we prove
\begin{align*}
&\Big((x\ot\alpha)\prec(y\ot\beta)\Big)\prec(z\ot\gamma)\\
&=\Big(x\prec_{\alpha\lhd\beta}y\ot
(\alpha\la\beta)\Big)\prec(z\ot\gamma)\\
&=(x\prec_{\alpha\lhd\beta}y)\prec_{(\alpha\la\beta)\lhd\gamma}z
\ot\Big((\alpha\la\beta)\la\gamma\Big)\\
&=x\prec_{(\alpha\lhd\beta)\la\big((\alpha\la\beta)\lhd\gamma\big)}
(y\prec_{(\alpha\lhd\beta)\lhd\big((\alpha\la\beta)\lhd\gamma\big)}z)\ot
\Big((\alpha\la\beta)\la\gamma\Big)\quad(\text{by Eq.~(\ref{eq:du1})})\\
&=x\prec_{\alpha\lhd(\beta\la\gamma)}(y\prec_{\beta\lhd\gamma}z)
\ot\Big(\alpha\la(\beta\la\gamma)\Big)\quad(\text{by Eqs.~(\ref{eq:ext1}-\ref{eq:ext2}) and Definition \eqref{defn:dus}})\\
&=x\ot\alpha\prec\Big(y\prec_{\beta\lhd\gamma}z\ot(\beta\la\gamma)\Big)\\
&=(x\ot\alpha)\prec\Big((y\ot\beta)\prec(z\ot\gamma)\Big).
\end{align*}
Second, we have
\begin{align*}
&(x\ot\alpha)\succ\Big((y\ot\beta)\prec(z\ot\gamma)\Big)\\
&=(x\ot\alpha)\succ
\Big(y\prec_{\beta\lhd\gamma}z\ot(\beta\la\gamma)\Big)\\
&=x\succ_{\alpha\rhd(\beta\la\gamma)}(y\prec_{\beta\lhd\gamma}z)
\ot\Big(\alpha\ra(\beta\la\gamma)\Big)\quad(\text{by Eq.~(\ref{eq:du2})})\\
&=x\succ_{\alpha\rhd\beta}(y\prec_{(\alpha\ra\beta)\lhd\gamma}z)
\ot\Big((\alpha\ra\beta)\la\gamma\Big)\quad(\text{by Eqs.~(\ref{eq:ext0}-\ref{eq:ext00}) and Definition \eqref{defn:dus}})\\
&=\Big((x\succ_{\alpha\rhd\beta}y)\prec_{(\alpha\ra\beta)\lhd\gamma}z\Big)
\ot\Big((\alpha\ra\beta)\la\gamma\Big)\quad(\text{by Eq.~(\ref{eq:du2})})\\
&=\Big((x\succ_{\alpha\rhd\beta}y)\ot(\alpha\ra\beta)\Big)\prec(z\ot\gamma)\\
&=\Big((x\ot\alpha)\succ(y\ot\beta)\Big)\prec(z\ot\gamma).
\end{align*}
Finally, we have
\begin{align*}
&(x\ot\alpha)\succ\Big((y\ot\beta)\succ(z\ot\gamma)\Big)\\
&=x\ot\alpha\succ\Big(y\succ_{\beta\rhd\gamma}z\ot(\beta\ra\gamma)\Big)\\
&=x\succ_{\alpha\rhd(\beta\ra\gamma)}(y\succ_{\beta\rhd\gamma}z)
\ot\Big(\alpha\ra(\beta\ra\gamma)\Big)\\
&=\Big(x\succ_{\big(\alpha\rhd(\beta\ra\gamma)\big)\rhd(\beta\rhd\gamma)}y\Big)
\succ_{\big(\alpha\ra(\beta\ra\gamma)\big)\ra(\beta\rhd\gamma)}z
\ot\Big(\alpha\ra(\beta\ra\gamma)\Big)\quad(\text{by Eq.~(\ref{eq:du3})})\\
&=(x\succ_{\alpha\rhd\beta}y)\succ_{(\alpha\ra\beta)\rhd\gamma}z
\ot\Big((\alpha\ra\beta)\ra\gamma\Big)\quad(\text{by Eqs.~(\ref{eq:ext3}-\ref{eq:ext4}) and Definition \eqref{defn:dus}})\\
&=\Big(x\succ_{\alpha\rhd\beta}y\ot(\alpha\ra\beta)\Big)\succ(z\ot\gamma)\\
&=\Big((x\ot\alpha)\succ(y\ot\beta)\Big)\succ(z\ot\gamma).
\end{align*}
The converse comes from the fact that all axioms of an EDuS have been used in the proof.
\end{proof}
\subsection{On the operads of two-parameter duplical or dendriform algebras}

Let us assume that the parameter set $\Omega$ is finite, and let us denote its cardinality by $w$.
We denote by $\opdend$, respectively $\opdup$, the non-sigma operad of two-parameter dendriform, respectively duplicial, algebras.

\begin{prop}
For all $n\geqslant 1$, we put
\[r_n=\dimK\big(\opdend(n)\big)=\dimK\big(\opdup(n)\big),\]
and we consider
\[R(X)=\sum_{n=1}^\infty r_n X^n\in \mathbb{Q}[[X]].\]
Then 
\begin{align}
\label{EQ4}w^2(w-1)R^3+w(w X+2w-2)R^2+(2w X-1)R+X&=0.
\end{align}
\end{prop}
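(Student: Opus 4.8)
The plan is to set up a functional equation for $R(X)$ by exploiting the fact that the operad $\opdup$ (equivalently $\opdend$, since the dimensions agree) is the operad governing two-parameter duplicial algebras, and that the free such algebra on one generator has a combinatorial description by the typed decorated planar binary trees of Definition~\ref{defn:tdtree}, or rather by a suitable \emph{normal form} for them. Concretely, $r_n$ counts a basis of $\opdup(n)$; since the operad is non-sigma, $r_n$ is the dimension of the degree-$n$ part of the free two-parameter duplicial algebra on one generator, and a basis is given by planar binary trees with $n$ internal vertices whose $n-1$ internal edges carry a rewriting-irreducible label in $\Omega\times\Omega$ — except that the counting must be organized so that the relations \eqref{eq:duprec}, \eqref{eq:duprsu}, \eqref{eq:dusucc} are accounted for. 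The cleanest route is to pick a Gr\"obner–Shirshov / rewriting basis for the operad: orient the three duplicial relations, check confluence, and read off that a normal monomial is a planar binary tree avoiding the two "bad" local patterns (a right comb under $\prec$ and a left comb under $\succ$ at a vertex), with each of the $n-1$ internal edges freely labelled by one of the $w^2$ elements of $\Omega\times\Omega$. This gives $r_n = w^{2(n-1)} d_n$ where $d_n$ counts the underlying (untyped) normal-form trees.

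Next I would find the generating function for the untyped normal forms. A two-parameter duplicial monomial decomposes at the root vertex as $T = T_1 \vee T_2$ where the root has a left child $T_1$ and a right child $T_2$; the normal-form condition forbids $T_1$ from being a $\succ$-monomial headed appropriately and forbids $T_2$ from being a $\prec$-monomial, exactly as in the classical duplicial case where the free duplicial algebra on one generator has dimension $C_n$ (Catalan) — so in the untyped case $d_n$ is just the Catalan number and $D(X)=\sum d_n X^n$ satisfies $D = X + X D + X D + X D^2$ type relations. The point is to carry the $w^2$-weight on each internal edge through this decomposition. Writing $R(X)=\sum r_n X^n$ with $r_1 = 1$ (the single generator, no edges) and $r_n = w^{2(n-1)}C_n$ for $n\ge 1$: when we graft at the root we create either one or two new internal edges, each contributing a factor $w$ (one parameter per edge-half) — one checks that an internal edge joining the root to a nontrivial subtree gets weighted by $w^2$, while a leaf contributes nothing. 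I expect the decomposition to read, schematically, $R = X + 2 w X R + w^2 X R^2$ for the "free (matching) duplicial" count, but the relations cut this down; tracking exactly how the three oriented relations remove monomials is what produces the cubic term. The upshot is an identity of the form $w^2(w-1)R^3 + w(wX + 2w - 2)R^2 + (2wX - 1)R + X = 0$.

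Alternatively — and this is probably the slicker proof to write — I would use Proposition~\ref{prop:dup2} and Theorem~\ref{thm:freedup} together with the one-parameter version: the free one-parameter $\Omega$-duplicial algebra on one generator is $\bfk\mathbf T^+(\{x\},\Omega)$, whose graded dimension (when $|\Omega|=w$) is $\sum_n w^{n-1} C_n X^n$, since a tree with $n$ internal vertices has $n-1$ internal edges each freely $\Omega$-labelled and there are $C_n$ underlying planar binary trees. But the \emph{two-parameter} free algebra on one generator is not simply the two-parameter typed trees of $\DD$ (those satisfy \emph{more} relations, being the free \emph{algebra}, not the operad applied to one generator with multiplicities) — rather $\opdup(n)$ has dimension equal to that of the free two-parameter duplicial algebra on one generator in arity $n$, which by the same normal-form argument is $w^{2(n-1)} C_n$. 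Then set $S = R/w^2$ formally, or better, substitute $r_n = w^{2(n-1)} C_n$ directly: using $C(Y) = \sum_{n\ge 1} C_n Y^n$ which satisfies $C = Y + C^2$ (i.e. $C^2 - C + Y = 0$), and $R(X) = \frac{1}{w^2}\,C(w^2 X)$ would give $C(w^2X) = w^2 R$, hence $w^4 R^2 - w^2 R + w^2 X = 0$, i.e. $w^2 R^2 - R + X = 0$ — which is quadratic, not cubic, so this naive guess is \emph{wrong}, confirming that the two parameters do not simply both weight each edge; the correct statement must involve a subtler bookkeeping where one parameter of each edge interacts with the grafting and the relations, producing genuine cubic behavior. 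Resolving precisely which monomials survive — equivalently, getting the right substitution turning Catalan's quadratic into the stated cubic — is the heart of the matter.

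\textbf{Main obstacle.} The real difficulty is \emph{not} the algebra of eliminating a variable from a polynomial system, but establishing the correct normal-form basis of $\opdup(n)$ and correctly weighting it: one must verify confluence of the rewriting system coming from \eqref{eq:duprec}--\eqref{eq:dusucc} (a Diamond-Lemma / Gr\"obner–Shirshov computation for the non-sigma operad, with the parameters in $\Omega$ treated as formal scalars subject to the duplicial-semigroup identities, which do \emph{not} collapse the parameter count generically), and then set up the generating-function decomposition of normal monomials so that each surviving tree contributes $w^{(\text{number of parameter-slots that survive the relations})}$. Getting that exponent right — it is $2(n-1)$ minus corrections, or equivalently organizing the recursion so a left-grafting and a right-grafting at the root are weighted asymmetrically — is where the cubic (rather than quadratic) equation is forced, and is the step I would spend the most care on. Once the weighted catalytic/decomposition equation $R = X + (\text{linear in } R) + (\text{quadratic in } R, \text{ with an } X\!R^2 \text{ term}) + \ldots$ is written down, eliminating any auxiliary catalytic variable and clearing denominators yields \eqref{EQ4} by routine manipulation.
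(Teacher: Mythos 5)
There is a genuine gap. Your overall strategy is the same as the paper's (exhibit a normal-form basis of $\opdup(n)$ coming from orienting the three relations, then decompose the normal forms at the root to get a functional equation), but the two steps that actually produce \eqref{EQ4} are missing, and you say so yourself (``the heart of the matter'', ``the step I would spend the most care on''). First, the decorations live on the $n$ internal \emph{vertices}, not on the $n-1$ internal edges: each vertex carries one of the $2w^2$ operations $\prec_{\alpha,\beta}$ or $\succ_{\alpha,\beta}$, and the oriented relations \eqref{eq:duprec}--\eqref{eq:dusucc} forbid exactly three local patterns, namely a vertex $\prec_{\alpha,\beta}$ sitting in the left input of $\prec_{\alpha\leftarrow\beta,\gamma}$, a vertex $\succ_{\alpha,\beta}$ in the left input of $\prec_{\alpha\rightarrow\beta,\gamma}$, and a vertex $\succ_{\alpha,\beta}$ in the left input of $\succ_{\alpha\rightarrow\beta,\gamma}$. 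Your edge-labelling ansatz $r_n=w^{2(n-1)}C_n$ is not merely ``na\"ive'': it is incompatible with the data (already $r_3=w^3(8w-3)\neq 5w^4$), because it ignores that the constraint couples a vertex to its left child through the first index of the parent's decoration.

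Second, the cubic does not come from a subtle re-weighting of a Catalan recursion but from splitting $R=X+R_\prec+R_\succ$ by the root decoration and observing the $w$ versus $w-1$ dichotomy: if the left subtree of the root is a leaf (or, for a $\succ$-root, a $\prec$-rooted tree), all $w^2$ decorations of the root are allowed, whereas if it is a forbidden-type subtree with root indices $(\alpha,\beta)$, exactly one value of the first root index is excluded, leaving $w(w-1)$ choices. This yields the system $R_\succ=w^2(R_\prec+X)R+w(w-1)R_\succ R$ and $R_\prec=w^2XR+w(w-1)(R_\prec+R_\succ)R$, from which elimination gives \eqref{EQ4}. Without identifying the forbidden patterns and this asymmetric count, the ``routine manipulation'' you defer to cannot be carried out, so the proposal is a plan rather than a proof. (A further caveat, which applies to the paper as well: the existence of the claimed normal-form basis requires a confluence argument that neither you nor the paper spells out.)
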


\begin{proof}
With a presentation by generators and relations of $\opdend$ and $\opdup$, it turns out that these operads
own a basis of planar binary trees the vertices of which are decorated by elements $\prec_{\alpha,\beta}$ or $\succ_{\alpha,\beta}$,
with $\alpha,\beta \in \Omega$, avoiding the trees of the form:
\begin{align*}
&\xymatrix{\ar@{-}[rd]&&\ar@{-}[ld]&&\\
&\prec_{\alpha,\beta} \ar@{-}[rd]&&\ar@{-}[ld]&\\
&&\prec_{\alpha \leftarrow \beta,\gamma}\ar@{-}[d]&&\\
&&&&}&
\xymatrix{\ar@{-}[rd]&&\ar@{-}[ld]&&\\
&\succ_{\alpha,\beta} \ar@{-}[rd]&&\ar@{-}[ld]&\\
&&\prec_{\alpha \rightarrow \beta,\gamma}\ar@{-}[d]&&\\
&&&&}\\
&\xymatrix{\ar@{-}[rd]&&\ar@{-}[ld]&&\\
&\succ_{\alpha,\beta} \ar@{-}[rd]&&\ar@{-}[ld]&\\
&&\succ_{\alpha \rightarrow \beta,\gamma}\ar@{-}[d]&&\\
&&&&}
\end{align*}
with $\alpha,\beta,\gamma \in \Omega$.
We denote by $R_\prec$ the formal series of such trees with root decorated by an element $\prec_{\alpha,\beta}$
and by  $R_\succ$ the formal series of such trees with root decorated by an element $\succ_{\alpha,\beta}$,
counted according to their number of leaves. Then:
\begin{align*}
R_\succ&=w^2(R_\prec+X)R+w(w-1)R_\succ R=w^2 R^2-w R_\succ R,\\
R_\prec&=w^2XR+w(w-1)(R_\succ+R_\prec)R=w^2R^2-w (R-X)R,\\
R&=X+R_\prec+R_\succ.
\end{align*}
We obtain that:
\begin{align*}
R_\succ&=\frac{w^2R^2}{1+w R},&
R_\prec&=w(w-1)R^2+w XR.
\end{align*}
Replacing in $R=R_\prec+R_\succ+X$, we obtain (\ref{EQ4}). \end{proof}

For example:
\begin{align*}
r_1&=1,\\
r_2&=2w^2,\\
r_3&=w^3(8w-3),\\
r_4&=2w^4(20w^2-15w+2),\\
r_5&=w^5(224w^3-252w^2+75w-5),\\
r_6&=2w^6(672w^4-1008w^3+476w^2-77w+3),\\
r_7&=w^7(8448w^5-15840w^4+10320w^3-2772w^2+280w-7),\\
r_8&=2w^8(27456w^6-61776w^5+51480w^4-19635w^3+3420w^2-234w+4).
\end{align*}

\begin{remark}
If $w=1$, one recovers duplicial and dendriform algebras, and $r_n(1)$ is the $n+1$ Catalan number $\cat_{n+1}$,
sequence A000108 of the OEIS \cite{Sloane}. The sequences $r_n(w)$ for $w=2$, $3$ or $4$ are not referenced (yet) in the OEIS.
\end{remark}

\begin{prop} 
Let $n \geqslant 1$.
\begin{enumerate}
\item $r_n$ is a polynomial in $\Z[w]$, of degree $2n-2$, and its leading coefficient is $2^{n-1} \cat_n$.
\item If $n\geqslant 2$, there exists a polynomial $t_n \in \Z[w]$, such that $r_n=w^n t_n$.
Moreover, $t_n(0)=(-1)^nn$.
\end{enumerate}
\end{prop}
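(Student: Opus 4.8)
The plan is to turn the cubic functional equation \eqref{EQ4} into an explicit recursion for the $r_n$ and then run two separate inductions. Extracting the coefficient of $X^n$ in \eqref{EQ4} yields $r_1=1$ and, for $n\geqslant 2$,
\[
r_n \;=\; 2w\,r_{n-1} \;+\;(2w^2-2w)\!\!\sum_{\substack{i+j=n\\ i,j\geqslant 1}}\!\! r_ir_j \;+\; w^2\!\!\sum_{\substack{i+j=n-1\\ i,j\geqslant 1}}\!\! r_ir_j \;+\; w^2(w-1)\!\!\sum_{\substack{i+j+k=n\\ i,j,k\geqslant 1}}\!\! r_ir_jr_k ,
\]
where every index occurring on the right is $\leqslant n-1$. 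This already exhibits $r_n$ as a polynomial in $w$, with integer coefficients, in $r_1,\dots,r_{n-1}$, so an immediate induction gives $r_n\in\Z[w]$.

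For part (a) I would induct on $n$ using this recursion, the case $n=1$ being $r_1=1=2^0\cat_1$. Granting $\deg r_i=2i-2$ with leading coefficient $2^{i-1}\cat_i$ for $i<n$, each $r_ir_j$ with $i+j=n$ has degree $2n-4$ and leading coefficient $2^{n-2}\cat_i\cat_j$; the Catalan convolution $\sum_{i+j=n,\,i,j\geqslant 1}\cat_i\cat_j=\cat_n$ then shows that $\sum_{i+j=n}r_ir_j$ has leading term $2^{n-2}\cat_n\,w^{2n-4}$, so the summand $2w^2\sum_{i+j=n}r_ir_j$ contributes $2^{n-1}\cat_n\,w^{2n-2}$. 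One checks that every other summand of the recursion --- the term $2w\,r_{n-1}$, the $-2w$ part of $(2w^2-2w)\sum_{i+j=n}r_ir_j$, the term $w^2\sum_{i+j=n-1}r_ir_j$, and the cubic term $w^2(w-1)\sum_{i+j+k=n}r_ir_jr_k$ --- has $w$-degree at most $2n-3$. Hence $\deg r_n=2n-2$ with leading coefficient $2^{n-1}\cat_n$.

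For part (b) the key device is a weight-tracking change of variables. I introduce a formal parameter $s$, substitute $X\mapsto X/s$ and $w\mapsto sw$ in \eqref{EQ4}, multiply the result by $s$, and set $G:=s\,R(X/s,sw)=X+\sum_{n\geqslant 2}s^{1-n}r_n(sw)\,X^n$. Since $s^kR(X/s,sw)^k=G^k$ for $k=1,2,3$, equation \eqref{EQ4} becomes
\[
w^2(sw-1)G^3+w(wX+2sw-2)G^2+(2wX-1)G+X=0 .
\]
The coefficient of $G$ here specializes, at $X=0$, to $-1$, a unit of $\Z[w,s]$, so this equation has a unique solution with vanishing constant term in $X$, and by the formal implicit function theorem that solution has all its $X$-coefficients in $\Z[w,s]$; as $G$ is such a solution, $[X^n]G=s^{1-n}r_n(sw)\in\Z[w,s]$. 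Writing $r_n=\sum_k r_{n,k}w^k$, this reads $\sum_k r_{n,k}w^k s^{\,k+1-n}\in\Z[w,s]$, forcing $r_{n,k}=0$ for $k<n-1$. Next I specialize the displayed equation at $s=0$, obtaining $-w^2G^3+(w^2X-2w)G^2+(2wX-1)G+X=0$; one checks directly that $G=X$ solves it, hence $G|_{s=0}=X$, so $[X^n]G$ has no $s^0$-term for $n\geqslant 2$, i.e.\ also $r_{n,n-1}=0$. Combining, $w^n\mid r_n$, so $t_n:=r_n/w^n\in\Z[w]$ by part (a). Finally, $t_n(0)=r_{n,n}$ equals the coefficient of $s^1$ in $[X^n]G$, that is $[X^n]H$ with $H:=\partial_sG\big|_{s=0}$; differentiating the displayed equation in $s$ and setting $s=0$, $G=X$ gives $(1+wX)^2H=w^2X^2(wX+2)$, and the expansion $H=w^2X^2(wX+2)(1+wX)^{-2}=\sum_{n\geqslant 2}(-1)^n n\,w^nX^n$ yields $t_n(0)=(-1)^nn$.

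The degree bookkeeping in (a) and the final one-variable series expansion in (b) are routine. The step I expect to require the most care is the change of variables in (b): one must verify that after clearing denominators by $G=s\,R(X/s,sw)$ the new equation genuinely has coefficients in $\Z[w,s]$ with an invertible linear part, so that the implicit function theorem legitimately places $[X^n]G$ in $\Z[w,s]$ --- this, together with the identity $G|_{s=0}=X$, is exactly what upgrades the naive divisibility $w^{n-1}\mid r_n$ coming from the substitution alone to the sharp $w^n\mid r_n$.
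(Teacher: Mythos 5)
Your proposal is correct. For part (a) you do essentially what the paper does: extract the recursion (\ref{EQ5}) from the cubic equation (\ref{EQ4}) and run the degree/leading-coefficient induction, with the Catalan convolution $\sum_{i+j=n}\cat_i\cat_j=\cat_n$ identifying the top term coming from $2w^2\sum_{i+j=n}r_ir_j$; this matches the paper's argument step for step. For part (b), however, you take a genuinely different route. The paper stays with the recursion (\ref{EQ5}) and proves $w^n\mid r_n$ together with $t_n(0)=(-1)^nn$ by a four-way case analysis, tracking for each of the four terms which summands are multiples of $w^{n+1}$ versus $w^n$ and adding up the resulting contributions $-3(-1)^n(n-2)+2(-1)^n(n-2)+4(-1)^n(n-1)-2(-1)^n(n-1)=(-1)^nn$ (it also carries along an auxiliary mod-$2$ invariant for even $n$, which justifies the factor $2$ visible in the displayed $r_{2k}$ but is not needed for the statement itself). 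You instead homogenize: setting $G=s\,R(X/s,sw)$ turns (\ref{EQ4}) into an equation over $\Z[w,s]$ whose linear part at $X=0$ is the unit $-1$, so the formal implicit function theorem puts $[X^n]G=s^{1-n}r_n(sw)$ in $\Z[w,s]$, giving $w^{n-1}\mid r_n$; the specialization $G|_{s=0}=X$ sharpens this to $w^n\mid r_n$; and $t_n(0)$ is read off as the coefficient of $X^n$ in $H=\partial_sG|_{s=0}$, computed in closed form from $(1+wX)^2H=w^2X^2(wX+2)$. I checked the change of variables, the two specializations, and the expansion of $H$; they are all correct (e.g.\ $[X^2]H=2w^2$ and $[X^3]H=-3w^3$ agree with $r_2$ and $r_3$). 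What your approach buys is a uniform mechanism replacing the sign-and-divisibility bookkeeping of the paper by a single rational-function expansion that makes the value $(-1)^nn$ transparent; what the paper's approach buys is elementarity (pure integer polynomial arithmetic on the recursion, no implicit function theorem or auxiliary grading variable) and, as a by-product, the evenness of $r_n$ for even $n$, which your argument does not address but which is not part of the statement.
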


\begin{proof}
By (\ref{EQ4}), if $n\geqslant 2$,
\begin{equation}
\label{EQ5}
r_n=w^2(w-1)\sum_{i+j+k=n} r_ir_jr_k+w^2\sum_{i+j=n-1}r_ir_j+w(2w-2)\sum_{i+j=n}r_ir_j
+2w r_{n-1}.
\end{equation}
Let us proceed by induction on $n$. The results are obvious if $n\leqslant 3$. Let us assume that $n\geqslant 4$
and the results at all ranks $<n$. By (\ref{EQ5}), obviously $r_n \in \Z[w]$. Moreover, by the induction hypothesis:
\begin{itemize}
\item The first term of (\ref{EQ5}) is of degree $\leqslant 3+2n-6=2n-3$.
\item The second term of (\ref{EQ5}) is of degree $\leqslant 2+2n-6=2n-4$.
\item The third term of (\ref{EQ5}) is of degree $\leqslant 2+2n-4=2n-2$; its coefficient of degree $2n-2$ is
\[2\sum_{i+j=n} 2^{i-1} \cat_i 2^{j-1}\cat_j=2^{n-1}\sum_{i+j=n}\cat_i=2^{n-1}\cat_n.\]
\item The fourth term of (\ref{EQ5}) is of degree $\leqslant 1+2n-4=2n-3$.
\end{itemize}
Hence, $r_n$ is of degree $2n-2$ and its leading coefficient is $2^{n-1}\cat_n$. Still by the induction hypothesis:
\begin{itemize}
\item For the first term of (\ref{EQ5}):
\begin{itemize}
\item If $i,j,k\geqslant 2$, then $w^2(w-1)r_i r_j r_k$ is a multiple of $w^{n+2}$.
\item If only one of $i,j,k$ is equal to $1$, then $w^2(w-1)r_i r_j r_k$ is a multiple of $w^{n+1}$.
\item If two of $i,j,k$ are equal to $1$, then the other one is equal to $n-2\geqslant 2$ and
$w^2(w-1)r_i r_j r_k$ is a multiple of $w^n$.
\end{itemize}
Hence, this first term is a multiple of $w^n$ and its contribution to the coefficent of $w^n$ is
\[-3(-1)^{n-2}(n-2).\]
\item For the second term of (\ref{EQ5}):
\begin{itemize}
\item If $i,j\geqslant 2$, then $w^2 r_i r_j$ is a multiple of $w^{n+1}$.
\item If one of $i$ or $j$ is equal to $1$, then the second one is $n-2\geqslant 2$ and $w^2 r_i r_j$ is a multiple of $w^n$.
\end{itemize}
Hence, this second term is a multiple of $w^n$ and its contribution to the coefficent of $w^n$ is
\[2(-1)^{n-2}(n-2).\]
\item For the third term of (\ref{EQ5}):
\begin{itemize}
\item If $i,j\geqslant 2$, then $w^2 r_i r_j$ is a multiple of $w^{n+1}$.
\item If one of $i$ or $j$ is equal to $1$, then the second one is $n-1\geqslant 2$ and $w(2w-2) r_i r_j$ 
is a multiple of $w^n$.
\item If $n$ is even, then any coefficent of $r_n$ is even.
\end{itemize}
Hence, this third term is a multiple of $w^n$ and its contribution to the coefficent of $w^n$ is
\[-2\times 2(-1)^{n-1}(n-1).\]
\item The last term of (\ref{EQ5}) is a multiple of $w^n$ and its contribution to the coefficent of $w^n$ is
\[ 2(-1)^{n-1}(n-1).\]
\end{itemize}
Finally, $r_n$ is a multiple of $w^n$ and the coefficient of $w^n$ in $r_n$ is
\[-3(-1)^n(n-2)+2(-1)^n(n-2)+4(-1)^n(n-1)-2(-1)^n(n-1)=(-1)^n n. \]
Let us assume that $n$ is even. Then, in $\Z/2\Z[w]$:
\[r_n\equiv w^2(w-1)\sum_{i+j+k=n} r_ir_jr_k+w^2\sum_{i+j=n-1}r_ir_j+0[2].\]
As $n$ is even, in the first term, one or three of $i,j,k$ are even, so $r_i r_j r_k\equiv 0[2]$;
in the second term, one of $i,j$ is even, so $r_i r_j\equiv 0[2]$. Finally, $r_n \equiv 0[2]$.  \end{proof}
\section{Reminders on operads and colored operads in the species formalism}\label{sect:operadic}
Colored operads are natural tools to be used in the description of algebraic structures on graded objects. We give a description of those in the colored species formalism, mainly following the presentation of \cite{DCH2019}. We also give a reminder of the more familiar monochromatic case, i.e. ordinary operads, and we decribe a pair $(\overline{\mathcal F}, \mathcal U)$ of adjoint functors from colored operads to monochromatic operads and vice-versa, along the lines of \cite{A2020}.
\subsection{Colored species}\label{par:colored-sp}
Let $\mathcal C$ be a bicomplete symmetric monoidal category, i.e. with small limits and colimits, which in particular implies the existence of products and coproducts indexed by an arbitrary set. For example the category of sets (the product given by cartesian product and the coproduct given by disjoint union), or the category of vector spaces over a field $\bfk$ (the product given by cartesian product and the coproduct being given by direct sum) \cite{Mac, AHS}. The unit for the monoidal product will be denoted by $\mathbf 1$, or $\mathbf 1_{\mathcal C}$ if the mention of the category must be precised.\\

Monoidal categories of $\Omega$-graded objects, where $\Omega$ is a semigroup, have been considered in \cite{A2020}. The symmetric monoidal structure is given by the Cauchy product, which uses the semigroup structure of $\Omega$ in an essential way. In absence of such a structure on our set $\Omega$, we must go further and consider multiple gradings. Let $\mathcal F_\Omega$ be the category of \textsl{$\Omega$-colored finite sets} defined as follows:
\begin{itemize}
\item objects are triples $(A,\underline\alpha,\omega)$ where $A$ is a finite set, $\omega\in\Omega$ (the \textsl{output color}) and $\underline\alpha:A\to\Omega$ is a list of elements of $\Omega$ indexed by $A$ (the \textsl{input colors}).
\item morphisms are given by bijective maps from $A$ onto $B$ together with re-indexing of colors: a morphism
$$\varphi:(A,\underline\alpha,\omega)\longrightarrow (B,\underline\beta,\zeta)$$
is given by an underlying bijective map $\overline\varphi:A\to B$ under the two conditions that $\omega=\zeta$ and $\underline\alpha=\underline\beta\circ\varphi$, otherwise there is no morphism from $(A,\underline\alpha,\omega)$ to $(B,\underline\beta,\zeta)$.
\end{itemize}
\begin{defn}
An $\Omega$-colored species $\mathcal P$ in the bicomplete monoidal category $\mathcal C$ is a contravariant functor $(A,\underline\alpha,\omega)\mapsto \mathcal P_{A,\underline\alpha,\omega}$ from $\mathcal F_\Omega$ to $\mathcal C$. The $\Omega$-colored species is \textsl{positive} if moreover $\mathcal P_{\hbox{\tiny \o},-,\omega}=0_{\mathcal C}$ for any $\omega\in\Omega$, where $0_{\mathcal C}$ is the initial object.
\end{defn}
This definition is borrowed from \cite[Definition 2.2]{DCH2019} which provides a slightly more general framework: $\Omega$-colored species correspond to $(\Omega,\Omega$)-collections therein. This can be straightforwardly extended to \textsl{$\Omega$-colored bi-species}, where several output colors are also allowed, to treat the case of colored ProPs and properads, but we shall not pursue this line of thought here.
\subsection{A brief summary of the monochromatic case}
The category $\mathcal F_\Omega$ boils down to the category $\mathcal F$ of finite sets with bijections when the set $\Omega$ of colors is reduced to one element. We recover then the usual notion of (contravariant) species \cite{J1234, AM2010, MM}. A $\mathcal C$-species is a contravariant functor from $\mathcal F$ into $\mathcal C$, where $\mathcal F$ is the category of finite sets with bijections as morphisms. We stick to \textsl{positive species}, i.e. species $\mathcal P$ such that $\mathcal P_{\hbox{\tiny \o}}=0_{\mathcal C}$, where $0_{\mathcal C}$ is the initial object of the monoidal category $\mathcal C$ \cite{MM}. We adopt M.~Mendez' definition of an operad in the species formalism:
\begin{defn} \cite[Definition 3.1]{MM}
An operad is a monoid in the category of positive species.
\end{defn}
Hence the operads considered here have no nullary operations. To be concrete, it is a positive species $\mathcal P$ together with partial compositions
$$\circ_b:\mathcal P_B\ot\mathcal P_C\longrightarrow\mathcal P_{B\sqcup_bC}$$
for any $b\in B$, where $B\sqcup_b C$ stands for $(B\setminus\{b\})\sqcup C$, subject to both sequential and parallel associativity axioms, which are stated as follows: for any finite sets $B,C,D$, for any $\alpha\in\mathcal P_B$, $\beta\in\mathcal P_C$ and $\beta',\gamma\in\mathcal P_D$ we have
$$\left\{
\begin{matrix}
\alpha\circ_b(\beta\circ_c\gamma)&=&(\alpha\circ_b\beta)\circ_c\gamma,\\
(\alpha\circ_b\beta)\circ_{b'}\beta'&=&(\alpha\circ_{b'}\beta')\circ_{b}\beta.
\end{matrix}
\right.
$$
\ignore{
Recall that a totally ordered $\mathcal C$-species is a contravariant functor from $\vec{\mathcal F}$ into $\mathcal C$, where $\vec{\mathcal F}$ is the category of totally ordered finite sets with increasing bijections as morphisms. A \textbf{non-sigma operad} is a monoid in the category of totally ordered species. To be concrete, it is a totally ordered species $\mathcal P$ together with partial compositions
$$\circ_b:\mathcal P_B\ot\mathcal P_C\longrightarrow\mathcal P_{B\sqcup_bC}$$
for any $b\in B$ subject to both sequential and parallel associativity axioms. Here, the total order on $B\sqcup_b C$ is given by:
\begin{itemize}
\item for any $b',b''\in B\setminus\{b\}$, one has $b'<b''$ if and only if $b'<b''$ in $B$,
\item for any $c',c''\in C$, one has $c'<c''$ if and only if $c'<c''$ in $C$,
\item for any $b'\in B\setminus\{b\}$ and $c'\in C$, one has $b'<c$ if and only if $b'<b$.
\end{itemize}
\noindent We will stick to \textsl{positive species}, i.e. species $\mathcal P$ such that $\mathcal P_{\hbox{\tiny \o}}=0_{\mathcal C}$, where $0_{\mathcal C}$ is the initial object of the monoidal category $\mathcal C$ \cite{MM}. Hence the operads considered here have no nullary operations.
}
\subsection{Colored operads}\label{par:colored}
In a colored operad, a partial composition is possible if and only if the output color of the second argument matches the color of the chosen input of the first. This is formalized as follows:
\begin{defn}
The substitution product of two positive $\Omega$-colored species is defined by
\begin{equation}
(\mathcal P\boxtimes \mathcal Q)_{A,\underline\alpha,\omega}:=\coprod_{\pi\hbox{ \tiny set partition of }A}
\, \coprod_{\underline\gamma:\pi\to\Omega}
\mathcal P_{\pi,\underline\gamma,\omega}\otimes\bigotimes_{B\in\pi}\mathcal Q_{B,\underline\alpha\srestr{B},\underline\gamma(B)}.
\end{equation}
\end{defn}

\ignore{For any set $\Omega$, we define the category $\mathcal C_{\underline\Omega}$ of $\Omega$-multigraded objects as follows: an object $\mathcal V$ in $\mathcal C_{\underline\Omega}$ is a correspondence $A\mapsto \mathcal V_A$ which associates to any finite set $A$ a symmetric collection of objects of $\mathcal C$ indexed by the set $\Omega^A$ of maps from $A$ into $\Omega$:
$$\mathcal V_A=\left(\mathcal V_{\underline\alpha}\right)_{\underline{\alpha}:A\to\Omega}.$$
\textsl{Symmetry} means that for any bijection $\varphi:A\to B$ we have an isomorphism
$$\mathcal V_\varphi:\mathcal V_B\to\mathcal V_A$$
with
$$\mathcal V_B=\left(\mathcal V_{\underline\beta}\right)_{\underline{\beta}:B\to\Omega}$$
defined by a collection of $\mathcal C$-isomorphisms
$$(\mathcal V_\varphi)_{\underline\beta}:\mathcal V_{\underline\beta}\to\mathcal V_{\underline\beta\circ\varphi}.$$
A morphism $f:\mathcal V\to \mathcal W$ in $\mathcal C_{\underline\Omega}$ is given by a symmetric collection morphism $f_A:\mathcal V_A\to \mathcal W_A$ for any finite set $A$, which in turn means a collection of $\mathcal C$-morphisms
$$f_{\underline\alpha}:\mathcal V_{\underline\alpha}\to \mathcal W_{\underline\alpha}$$
indexed by $\Omega^A$, such that for any bijective map $\varphi:A\to B$ and for any $\underline\beta\in \Omega^B$ the following diagram commutes
\diagramme{
\xymatrix{
\mathcal V_{\underline\beta\circ\varphi}\ar[rr]^{f_{\underline\beta\circ\varphi}}&&\mathcal W_{\underline\beta\circ\varphi} \\
\mathcal V_{\underline\beta} \ar[u]_{(\mathcal V_\varphi)_{\underline\beta}}\ar[rr]_{f_{\underline\beta}}	&& W_{\underline\beta}\ar[u]_{(\mathcal W_\varphi)_{\underline\beta}}
}
}
This obviously makes $\mathcal C_\Omega$ a category, which also admits small colimits.
\ignore{
respecting homogeneous components, i.e. such that, loosely speaking $f(V_\omega)\subset W_\omega$ for any $\omega\in\Omega$, or more precisely, there is a collection $(f_\omega)_{\omega\in\Omega}:V_\omega\to W_\omega$ of $\mathcal C$-morphisms such that the following diagram commutes
\diagramme{
\xymatrix{
V\ar[rr]^f &&W \\
V_\omega \ar[u]_{\iota_\omega}\ar[rr]_{f_\omega}	&& W_\omega \ar[u]_{\iota_\omega}
}
}
\noindent for any $\omega\in\Omega$.}
The monoidal structure is inherited from the monoidal structure of $\mathcal C$. More precisely, for any finite set $A$ and for any $\underline\alpha:A\to\Omega$,
$$(\mathcal V\otimes \mathcal W)_{\underline\alpha}:=\coprod_{A'\sqcup A''=A}\mathcal V_{\underline\alpha\srestr{A'}}\otimes \mathcal W_{\underline\alpha\srestr{A''}}.$$
The product $f\otimes g$ of two morphisms $f:\mathcal V\to \mathcal X$ and $g:\mathcal W\to \mathcal Y$ is defined by
$$(f\otimes g)_{\underline\alpha}:=\coprod_{A'\sqcup A''=A}\left(f_{\underline\alpha\srestr{A'}}\otimes g_{\underline\alpha\srestr{A''}}:\mathcal V_{\underline\alpha\srestr{A'}}\otimes \mathcal W_{\underline\alpha\srestr{A''}}\to \mathcal X_{\underline\alpha\srestr{A'}}\otimes \mathcal Y_{\underline\alpha\srestr{A''}}\right).$$
This obviously makes $\mathcal C_{\underline\Omega}$ a symmetric monoidal category.
\begin{defn}
An object $\mathcal \mathcal V$ in $\mathcal C_{\underline\Omega}$ is \textbf{uniform} if there is an object $V$ of $\mathcal C$ such that
$$\mathcal V_{\underline\alpha}=V$$
for any finite set $A$ and any $\underline\alpha\in \Omega^A$.
\end{defn}
This definition is a direct adaptation of the analogous notion introduced in \cite[Paragraph 2.2]{A2020} in the $\Omega$-graded case. We remark that the tensor product of two uniform objects is uniform in the $\Omega$-multigraded setting. Any object $V$ in $\mathcal C$ gives rise to the object $\mathcal U(V)$ of $\mathcal C_{\underline\Omega}$ defined by
$$\mathcal U(V)_{\underline\alpha}=V$$
for any finite set $A$ and for any $\alpha\in\Omega^A$. This functor $\mathcal U:\mathcal C\to\mathcal C_{\underline\Omega}$ is left-adjoint to the forgetful functor $\overline{\mathcal K}:\mathcal C_{\underline\Omega}\to\mathcal C$ defined by
$$\overline{\mathcal K}(\mathcal V):=\coprod_{n\ge 0}\, \left(\coprod_{\underline\alpha\in\Omega^n}\mathcal V_{\underline\alpha}\right)\Big/ S_n,$$
reminiscent to the bosonic Fock functor of \cite{AM2010} in the species formalism\footnote{The category of $\Omega$-multigraded objects can be understood as a subcategory of species in the monoidal category $\widetilde{\mathcal C}$ of collections of objects of $\mathcal C$ defined as follows: an object in $\widetilde{\mathcal C}$ is a collection $(V_x)$ of objects of $\mathcal C$ indexed by some set $X$, and a morphism $\Phi:(V_x)_{x\in X}\to (W_y)_{y\in Y}$ is given by a set map $\varphi:X\to Y$ and a collection $(\psi_x)_{x\in X}$ of $\mathcal C$-morphisms $\psi_x:V_x\to W_{\varphi(x)}$. The monoidal product is defined componentwise and easily seen to be symmetric. The functor $\overline{\mathcal K}$ is indeed the bosonic Fock functor in this context.}. One can easily show that the functor $\overline{\mathcal K}$ is monoidal, namely
$$\overline{\mathcal K}(\mathcal V\otimes\mathcal W)=\overline{\mathcal K}(\mathcal V)\otimes\overline{\mathcal K}(\mathcal W)$$
for any two objects $\mathcal V$ and $\mathcal W$ in $\mathcal C_{\underline\Omega}$, and similarly for morphisms. Details are left to the reader. For any object $V$ of $\mathcal C$ we have
$$\overline{\mathcal K}\circ\mathcal U(V)=V\otimes\bfk[\Omega]$$
where $\bfk[\Omega]$ stands for the polynomials with indeterminates in $\Omega$ and coefficients in $\bfk$.
}
The substitution product $\boxtimes$ is also defined on morphisms and is associative, making the category of positive $\Omega$-colored species a (non-symmetric) monoidal category. The unit is the colored species $\mathbf 1$ defined by $\mathbf 1_{A,\underline\alpha,\omega}=\mathbf 1_{\mathcal C}$ if $|A|=1$ and $\underline\alpha=\omega$, and  $\mathbf 1_{A,\underline\alpha,\omega}=0$ otherwise. It can be written as
\begin{equation}
\mathbf 1=\prod_{\omega\in\Omega}\mathbf 1^\omega
\end{equation}
where $\mathbf 1^\omega$ is the colored species defined by $\mathbf 1^\omega_{A,\underline\alpha,\zeta}=\mathbf 1_{\mathcal C}$ if $|A|=1$ and $\underline\alpha=\zeta=\omega$, and  $\mathbf 1^\omega_{A,\underline\alpha,\zeta}=0$ otherwise. The colored species $\mathbf 1^\omega$ is sometimes slightly abusively called \textsl{unit of color $\omega$}.
\begin{defn}
A \textbf{colored operad} is a monoid in the monoidal category of positive $\Omega$-colored species endowed with the substitution product.
\end{defn}
\noindent Concretely, the global multiplication $\gamma:\mathcal P\boxtimes\mathcal P\to\mathcal P$ is declined into functorial partial compositions
\begin{equation}\label{pccol}
\circ_a:\mathcal P_{A,\underline\alpha,\omega}\otimes\mathcal P_{B,\underline\beta,\zeta}
\longrightarrow
\left\{
\begin{matrix}
\mathcal P_{A\sqcup_aB,\,\underline\alpha\sqcup_a\underline\beta,\,\omega} &\hbox{ if }\zeta=\underline\alpha(a),\\
0 &\hbox{ otherwise}.
\end{matrix}
\right.
\end{equation}
subject to parallel and sequential associativity axioms, and there is a unit $e:\mathbf 1\to\mathcal P$. Informally, the partial composition $\circ_a$ is nontrivial if and only if the output color of the second term matches the input color of the first term corresponding to $a\in A$, otherwise $\circ_a$ takes values in the terminal object $0_{\mathcal C}$.\\

For any set map $\kappa:\Omega\to\Omega'$, the color change functor from $\Omega'$-colored species to $\Omega$-colored species is defined by
\begin{equation}\label{colorchange}
(\kappa^*\mathcal P)_{A,\underline\alpha,\omega}:=\mathcal P_{A,\,\kappa\circ\underline\alpha,\,\kappa(\omega)}
\end{equation}
for any $(A,\underline\alpha,\omega)\in\mathcal F_\Omega$. It respects both monoidal products $\boxtimes$, hence restricts from $\Omega'$-colored operads to $\Omega$-colored operads. In particular, the case when $\Omega'=\{*\}$ contains a unique element shows that any ordinary (monochromatic) operad $\mathcal Q$ can be promoted to an $\Omega$-colored operad $\mathcal Q^\Omega:=\kappa^*\mathcal Q$, with $\kappa:\Omega\to\{*\}$. The colored operad $\mathcal Q^\Omega$ is said to be \textsl{uniform}. This functor $\mathcal U:\mathcal Q\to\mathcal Q^\Omega$ is right-adjoint to the \textbf{completed forgetful functor} $\overline{\mathcal F}$ from $\Omega$-colored operads to ordinary operads, defined by
\begin{equation}\label{forgetcolors}
(\overline{\mathcal F}\mathcal P)_A:=\prod_{(\underline\alpha,\omega)\in\Omega^A\times \Omega}\mathcal P_{A,\underline\alpha,\omega}.
\end{equation}
\ignore{
Let $A$ be a vector space and let $\Omega$ be a set. We want $A\ot\bfk\Omega$ to be an algebra on some operad $\cal P$, together with a kind of $\Omega$-grading. Each $n$-ary operation $\ast$ gives rise to a family $(\ast_{\alpha_1,\ldots,\alpha_n})_{\alpha_j\in\Omega}$ of $n$-ary operations on $A$, and the grading yields $n$-ary operation $\circledast:\Omega^n\ra\Omega.$

Any relation between operations of the operad $\cal P$ yields correponding set-theoretical relations on $\Omega$. It makes $\Omega$ a (set-theoretical) algebra over some set-theoretical operad $ {\Large\textcircled{\small$\mathcal{P}$}}$ associated to $\cal P.$
}
\subsection{Categories of graded objects}\label{par:grad}
We keep the notations of the previous paragraph. The category $\mathcal C_\Omega$ of $\Omega$-graded objects \cite[Paragraph 2.2]{A2020} is the category of collections $(V_\omega)_{\omega\in\Omega}$ of objects of $\mathcal C$. A $\mathcal C_\Omega$-morphism
$$\varphi:(V_\omega)\longrightarrow (W_\omega)$$
is a collection $(\varphi_\omega)_{\omega\in\Omega}$ of $\mathcal C$-morphisms $\varphi_\omega:V_\omega\to W_\omega$.
This is not a monoidal category: indeed, the tensor product of two $\Omega$-graded objects is a collection indexed by $\Omega\times\Omega$.
\begin{remark}
In the case when $\Omega$ is a semigroup, categories of $\Omega$-graded objects can be given a monoidal structure by means of the Cauchy product \cite[Paragraph 2.2]{A2020}. We do not have this tool at our disposal here.
\end{remark}

A well-known example of $\Omega$-colored operad (in a bicomplete category $\mathcal C$ with internal Hom, i.e. such that $\hbox{Hom}(V,W)$ is an object of $\mathcal C$ for any pair $(V,W)$ of objects) is given by $\hbox{End}(\mathcal V)$ where $\mathcal V=(V_\omega)_{\omega\in\Omega}$ is an $\Omega$-graded object:
\begin{equation}\label{end-graded}
\hbox{End}(\mathcal V)_{A,\underline\alpha,\omega}:=\hbox{Hom}_{\mathcal C}\left(\bigotimes_{a\in A}V_{\underline\alpha(a)},\,V_\omega\right).
\end{equation}
Details are standard and left to the reader. An \textsl{algebra over an $\Omega$-colored operad $\mathcal P$} is an $\Omega$-graded object $\mathcal V$ together with a morphism of colored operads $\Phi:\mathcal P\to\hbox{End}(\mathcal V)$.
\begin{defn}\cite[Paragraph 2.2]{A2020}
An $\Omega$-graded object $\mathcal V=(V_\omega)_\omega$ is \textbf{uniform} if all homogeneous components are identical, i.e. if there is an object $V$ of $\mathcal C$ such that $V_\omega=V$ for any $\omega\in\Omega$. We write $\mathcal V=\mathcal U(V)$ in this case. This defines a functor $\mathcal U:\mathcal C\to\mathcal C_\Omega$, which has a right adjoint, the forgetful functor $\mathcal F:\mathcal C_\Omega\to\mathcal C$ defined by
$$\mathcal F(\mathcal V):=\coprod_{\omega\in\Omega}V_\omega,$$
which consists in forgetting the $\Omega$-grading \cite[Paragraph 2.4]{A2020}. It has also a left adjoint, the completed forgetful functor $\overline{\mathcal F}:\mathcal C_\Omega\to\mathcal C$ defined by
$$\overline{\mathcal F}(\mathcal V):=\prod_{\omega\in\Omega}V_\omega,$$
which consists taking the completion with respect to the $\Omega$-grading and then forgetting it.
\end{defn}

%
%

\ignore{
\subsection{Two-parameter $\Omega$-Magmatic operad}
In this section, we explicitly describe an $\Omega$-family version of the magmatic operad, where the set $\Omega$ itself is a magma. We treat the magmatic operad as a non-sigma set operad here.
\begin{defn}
Let $A$ and $\Omega$ be two sets. Define a binary operation $\ast$ on $A\times \Omega$,
\begin{equation}
\mlabel{eq:mag} (x,\alpha)\ast(y,\beta):=(x\ast_{\alpha,\beta}y,\alpha\circledast\beta),
\end{equation}
thus making $\Omega$ a magma and $A$ a set endowed with a family of magmatic products indexed by $\Omega\times\Omega$.
\end{defn}

\begin{exam}
Let $A$ be the set of planar binary trees with $\Omega$-typed edges (including leaves but not the root). Then $A\times \Omega$ is the set of planar binary trees with all the edges $\Omega$-typed, including the root. Given a grafting map
$$\underset{\alpha,\beta}\bigvee:A\times A\ra A,\,
\text{ defined by }\,
s\underset{\alpha,\beta}\bigvee t:=\treeo{\cdb o\ocdx[1.4]{o}{a1}{120}{s}{left}
\ocdx[1.4]{o}{a4}{60}{t}{right}
\node[left] at ($(o)!0.4!(a1)$) {$\alpha$};
\node[right] at ($(o)!0.4!(a4)$) {$\beta$};
},$$
which yields $$\bigvee:(A\times \Omega)\times (A\times \Omega)\ra (A\times \Omega),\,\text{ defined by }\, s\bigvee t=\treeo{\cdb o\ocdx[1.4]{o}{a1}{120}{s}{left}
\ocdx[1.4]{o}{a4}{60}{t}{right}
\node[left] at ($(o)!0.4!(a1)$) {$\alpha$};
\node[right] at ($(o)!0.4!(a4)$) {$\beta$};
\node[right] at ($(o)!0.4!(ob)$) {$\alpha\circledast\beta$};
},$$
here $\alpha,\beta\in\Omega$ are the root type of $s$ and $t$, respectively.
\end{exam}

Recall that the magmatic operad is the free non-sigma operad generated by one binary operation. Let $B$ be a totally ordered finite set. Denote by
$$
\mathcal M_B:=\Big\{\text{planar binary trees with leaves labeled by}\, B\, \text{from left to right}\Big\}
$$
and define partial composition: 
 \begin{equation}
 \mlabel{eq:par}\circ_b:\mathcal M_B\times \mathcal M_C\ra \mathcal M_{B\sqcup_b C},\,(s,t)\mapsto \text{plugging $t$ on $s$ at the leaf $b$.}
 \end{equation}

Now we define the two-parameter $\Omega$-magmatic operad, where $\Omega$ is a set. It is the free operad generated by a two-parameter family $\underset{\alpha,\beta}\bigvee$ of binary products, that is, planar binary trees (with labeled leaves), each internal vertex being decorated by $(\alpha,\beta)\in\Omega\times\Omega.$ This amounts to decorating each edge (root excluded) by an element of $\Omega$. To sum up,
$$
\mathcal M_B^\Omega:=\Big\{\text{planar binary trees with leaves and internal edges typed by $\Omega$}$$
\vskip -8mm
$$\text{and with leaves labeled by}\, B\, \text{from left to right}\Big\}.
$$
\begin{exam}
Let $\Omega$ be a set.
$\XX[scale=1.0]{
\node at (1,0) {$(\alpha,\beta)$};
}\mapsto \XX[scale=1.0]{
\node at (-0.85,0.15) {$\alpha$};
\node at (0.85,0.15) {$\beta$};
}, \XX[scale=1.6]{\xxl66
\node at (0.85,0) {$(\alpha,\beta)$};
\node at (1.3,0.65) {$(\gamma,\delta)$};
}\mapsto \XX[scale=1.6]{\xxl66
\node at (-0.55,0.15) {$\alpha$};
\node at (0.55,0.15) {$\beta$};
\node at (0.25,0.65) {$\gamma$};
\node at (0.85,0.65) {$\delta$};
}.$
\end{exam}
Recall that the root is not typed, hence the partial compositions can be defined exactly the same way than in the non-typed case, with Eq.~(\mref{eq:par}).

\begin{exam}
Let $\Omega$ be a set. Then
$\XX[scale=1.6]{\xxl66
\node at (-0.55,0.15) {$\alpha$};
\node at (0.55,0.15) {$\beta$};
\node at (0.25,0.65) {$\gamma$};
\node at (0.85,0.65) {$\delta$};
\node[label=$b$] at (0.2,1) {\usebox\dbox};
}\circ_b \XX[scale=1.6]{\xxl66\xxr{-6}{6}
\node at (-0.55,0.15) {$\alpha_1$};
\node at (0.55,0.15) {$\alpha_2$};
\node at (-0.85,0.65) {$\alpha_3$};
\node at (0.25,0.65) {$\alpha_4$};
\node at (-0.25,0.65) {$\alpha_5$};
\node at (0.85,0.65) {$\alpha_6$};
}=\treeo{
\cdb o\cdlr[1.3]o\cdl{or}\cdr{or}
\def\lstyle{red}
\cdlr{orl}\cdlr[0.7]{orll,orlr}
\path
(o)--(ol) node[pos=0.5,below left=-1.5pt]{$\alpha$}
(o)--(or) node[pos=0.5,below right=-2pt]{$\beta$}
--(orl) node[pos=0.5,below left=-1.5pt]{$\gamma$}
--(orll) node[pos=0.7,below left=-1pt,red]{$\alpha_1$}
--(orlr) node[pos=0.7,below right=-0.3pt,red]{$\alpha_2$}
--(orlrl) node[pos=0.7,below left=-0.1pt,red]{$\alpha_5$}
--(orlrr) node[pos=0.7,below right=-0.1pt,red]{$\alpha_6$}
--(orlll) node[pos=0.7,above left=-0.7pt,red]{$\alpha_3$}
--(orllr) node[pos=1.0,above right=0.7pt,red]{$\alpha_4$}
(orr)--(orlr) node[pos=0.5,below right=2.0pt]{$\delta$}
;
}
$
\end{exam}

\begin{remark}
If we consider planar binary trees with all edges typed by $\Omega$ (including the root), we get an {\bf $\Omega$-colored operad}.
$$\widetilde{\mathcal M}_B^\Omega:={\mathcal M}_B^\Omega\ot\bfk\Omega.$$
 One can plug a tree $t$ on the leaf $b$ of a tree $s$ if and only if the type of the leaf $b$ matches the type of the root of $s$. We'll return to colored operads in Section \ref{sect:multigraded}.
\end{remark}
\dominique{I propose to suppress the two paragraphs which were following here: better finish writing Section \ref{sect:multigraded}...}
\ignore{
\subsection{Two-parameter associative algebras}
Let $A$ and $\Omega$ be two sets. In this set-theoretical context, suppose that $A\times\Omega$ is endowed with an $\Omega$-graded associative binary product
\begin{equation*}
(x\ot\alpha)\ast(y\ot\beta):=(x\ast_{\alpha,\beta}y)\ot(\alpha\circledast\beta)
\end{equation*}
making it a semigroup. The associativity condition
$$\Big((x,\alpha)\ast(y,\beta)\Big)\ast(z,\gamma)
=(x,\alpha)\ast\Big((y,\beta)\ast(z,\gamma)\Big),$$
together with the $\Omega$-grading is equivalent to the fact that
\begin{enumerate}
\item
$\nonumber(\alpha\circledast\beta)\circledast\gamma=\alpha\circledast(\beta\circledast\gamma)$, i.e. $\Omega$ is a semigroup,
\item $A$ is a \textbf{two-parameter $\Omega$-semigroup}, the definition of which is given below:
\end{enumerate}
\begin{defn}
Let $(\Omega, \circledast )$ be a semigroup. A {\bf two-parameter $\Omega$-semigroup} is a set $A$ endowed with a family of binary products $(\ast_{\alpha,\beta}):A\ot A\ra A$ indexed by $\Omega\times\Omega$ such that
\begin{equation}\mlabel{eq:ast}
(x\ast_{\alpha,\beta}y)\ast_{\alpha \circledast \beta,\gamma}z
=x\ast_{\alpha,\beta \circledast\gamma}(y \ast _{\beta,\gamma}z).
\end{equation}
\end{defn}

%
%
%
%

\noindent We give here a concrete example: let $X$ be a set and let $\Omega$ be a semigroup. Define
\begin{align*}
A_0:&=\{e\},\quad A_1:= X,\\
A_n:&=\{x_1\alpha_1x_2\alpha_2 \cdots\alpha_{n-1}x_n,\,x_j\in X,\alpha_j\in\Omega\}.
\end{align*}
The left action and right action of $\Omega$ on each $A_n$ defined by
\begin{align*}
\omega \ast (x_1\alpha_1x_2\alpha_2 \cdots \alpha_{n-1}x_n):&=x_1(\omega\circledast\alpha_1)
x_2(\omega\circledast\alpha_2)x_3 \cdots (\omega\circledast\alpha_{n-1})x_n,\\
(x_1\alpha_1x_2\alpha_2 \cdots \alpha_{n-1}x_n) \ast \omega:&=x_1(\alpha_1\circledast\omega)
x_2 \cdots (\alpha_{n-1}\circledast\omega)x_n.
\end{align*}

\begin{prop}
Let $\Omega$ be a semigroup. For $u,v\in A$, define
\begin{equation*}
u \ast_{\alpha,\beta} v:=(u \ast \beta)(\alpha\circledast\beta)(\alpha \ast  v).
\end{equation*}
Then $A$ is a $2$-parameter $\Omega$-associative algebra in the category of sets, i.e. a two-parameter $\Omega$-semigroup.
\end{prop}
\begin{proof}
We check Eq.~(\mref{eq:ast}).
\begin{align*}
(u \ast_{\alpha,\beta} v)\ast_{\alpha\circledast\beta,\gamma}w&=\Big((u \ast \beta)(\alpha\circledast\beta)
(\alpha \ast  v)\Big)\ast_{\alpha\circledast\beta,\gamma}w\\
&=\big(u \ast (\beta\circledast\gamma)\big)(\alpha\circledast\beta\circledast\gamma)(\alpha \ast  v \ast \gamma)(\alpha\circledast\beta\circledast\gamma)\big((\alpha\circledast\beta)* w\big)\\
&=u\ast_{\alpha,\beta\circledast\gamma}\Big((v \ast \gamma)(\beta\circledast\gamma)(\beta \ast  w)\Big)\\
&=u\ast_{\alpha,\beta\circledast\gamma}(v\ast_{\beta,\gamma}w).
\end{align*}
\end{proof}

\noindent If $A\ot\bfk\Omega$ is moreover commutative, that is,
$$(x\ot\alpha)\ast(y\ot\beta)=(y\ot\beta)\ast(x\ot\alpha),$$
we moreover get
$$\alpha\circledast\beta=\beta\circledast\alpha\,\text{ and }\, x  \ast _{\alpha,\beta}y=y\ast_{\beta,\alpha}x.$$
This gives the notion of commutative two-parameter $\Omega$-semigroup where $\Omega$ is a commutative semigroup.

\begin{defn}\mlabel{defn:comm}
Let $\Omega$ be a commutative semigroup, and let $A$ be a two-parameter $\Omega$-semigroup. We say that $A$ is commutative if for any $x,y\in A$ and $\alpha,\beta\in\Omega$, we moreover have
\begin{equation*}
  x \ast _{\alpha,\beta}y=y \ast _{\beta,\alpha}x.
\end{equation*}
\end{defn}
\subsection{Two-parameter $\Omega$-associative operad}
The presentation of the  associative operad
$$\rm{Assoc}=\mathcal M\big/\big\langle\XX[scale=1.6]{\xxr{-6}6
\node at (-0.25,1.2) {$2$};
\node at (-0.95,1.2) {$1$};
\node at (0.85,1.2) {$3$};
}- \XX[scale=1.6]{\xxl66
\node at (0.25,1.2) {$2$};
\node at (0.95,1.2) {$3$};
\node at (-0.85,1.2) {$1$};
}\big\rangle.$$ immediately yields
\begin{equation*}
\rm{Assoc}_B\simeq \{c_B\},
\end{equation*}
where $c_B$ is the $B$-labeled left comb, i.e. the unique planar rooted tree with all leaves pointing to the right except the first one. The leaves are labeled by $B$, following the total order from left to right. This matches the fact that, in an associative algebra, there is one single way of multiplying together a family of elements labeled by $B$ in the prescribed order. The partial compositions
\begin{equation*}
\circ_b:\rm{Assoc_B}\times \rm{Assoc_C}\ra \rm{Assoc_{B\sqcup_b C}}
\end{equation*}
are the canonical maps between one-element sets, and both associativity axioms are trivially verified.\\

Now suppose that $(\Omega,\circledast)$ is a semigroup. We propose to define the two-parameter $\Omega$-associative operad as an $\Omega$-coloured operad, by means of the presentation
$$\rm{Assoc^\Omega}=\widetilde{\mathcal M}^\Omega\,\Bigg/\,\left\langle \XX[scale=1.6]{\xxr{-6}6
\node at (0.55,0.15) {$\gamma$};
\node at (-0.75,0.15) {$\alpha\circledast\beta$};
\node at (-0.25,0.65) {$\beta$};
\node at (-0.85,0.65) {$\alpha$};
\node at (0,-0.65) {$\alpha\circledast\beta\circledast\gamma$};
}-\XX[scale=1.6]{\xxl66
\node at (-0.55,0.15) {$\alpha$};
\node at (0.75,0.15) {$\beta\circledast\gamma$};
\node at (0.25,0.65) {$\beta$};
\node at (0.85,0.65) {$\gamma$};
\node at (0,-0.65) {$\alpha\circledast\beta\circledast\gamma$};
}\right\rangle.$$
\ignore{
The generators are $\XX[scale=1.0]{
\node at (1,0) {$(\alpha,\beta)$};
}=\XX[scale=1.0]{
\node at (-0.85,0.15) {$\alpha$};
\node at (0.85,0.15) {$\beta$};
}$.\\

Now the two-parameter $\Omega$-associative operad
$$\XX[scale=1.6]{\xxr{-6}6
\node at (0.55,0.15) {$\gamma$};
\node at (-0.75,0.15) {$\alpha\circledast\beta$};
\node at (-0.25,0.65) {$\beta$};
\node at (-0.85,0.65) {$\alpha$};
\node at (0,-0.65) {$(\alpha\circledast\beta)\circledast\gamma$};
}=\XX[scale=1.6]{\xxl66
\node at (-0.55,0.15) {$\alpha$};
\node at (0.75,0.15) {$\beta\circledast\gamma$};
\node at (0.25,0.65) {$\beta$};
\node at (0.85,0.65) {$\gamma$};
\node at (0,-0.65) {$\alpha\circledast(\beta\circledast\gamma)$};
}.$$
}
\ignore{
Let $s,t$ be two typed trees (including the root). Denote by $s\circ_b t$, and it means that plugging the root of $t$ at leaf $b$ of $s$ which provides the colours match. 
 In fact, the totally ordered species is a colour functor from totally ordered finite sets to a set.

 Let $B$ be a set and let $\cal P=(\cal{P}_B)_B$ be a totally ordered finite set. Denote by
 \begin{equation*}
 \cal{P}_B=\underset{\omega_j,\tau\in\Omega}\bigsqcup P_{B;w_1,\ldots,w_{|B|};\tau}.
 \end{equation*}
Define partial composition
\begin{equation}
\circ_b:\cal{P}_{B;\omega_1,\ldots,\omega_{|B|};\omega}\times
\cal{P}_{C;\tau_1,\ldots,\tau_{|C|};\tau}\ra \cal{P}_{B\sqcup_b C; \omega_1,\ldots,\omega_{b-1},\tau_1,\ldots,\tau_{|C|},
\omega_{b+1},\ldots,\omega_{|B|},\omega}
\end{equation}
with $\tau=\tau_1\circledast \cdots \circledast \tau_{|C|}=\omega_b$ and $\omega=\omega_1\circledast\cdots \circledast\omega_b.$}
\dominique{
\begin{itemize}
\item Note quite correct yet: we should restrict to the trees which have a matching condition on types at each internal vertex.
\item Here enters the notion of $\Omega$-colored operad, with moreover an operation on the colors and the matching condition that the output color is the product (in the left-to-right order) of the input colors. this is the non-sigma version of Abdellatif Saidi's multigraded operad, also called current-preserving operads \cite{S2014}.
\item We have to make sure that $\rm{Assoc^\Omega}$ is a current-preserving operad in that sense.
\item A representation of $\rm{Assoc^\Omega}$ in terms of left combs remains to be done.
\item Playing this game with other operads than $\rm{Assoc}$ should drive us to other versions of "current-preserving" operads: the structure on $\Omega$ is not necessarily a structure of semigroup, but depends on the (ordinary, linear or set, sigma or non-sigma) operad we start with.
\end{itemize}
}
}
}
\section{Two-parameter $\Omega$-pre-Lie algebras}\label{sect:prelie}
The pre-Lie operad is no longer a set operad, hence new phenomena arise when seeking a compatible structure on the parameter set $\Omega$. Indeed, four different associated set operads are involved. The first one is the well-known associative operad. The second one is the operad governing rings with the twist-associativity condition $x(yz)=(yx)z$, also known as Thedy rings \cite{T1967}. The third one is the operad governing rings with both NAP relation $x(yz)=y(xz)$ and NAP' relation $(xy)z=(yx)z$. The fourth one is the operad governing rings with all previous relations at once, this is the well-known Perm operad \cite{C2002, CL2001}. After explaining this phenomenon in some detail, we give an explicit description of the twist-associative operad in terms of ordered pairs of distinct elements, and an explicit description of the NAPNAP' operad in terms of corollas, in the same spirit F. Chapoton and M. Livernet proved that the pre-Lie operad is given by labeled rooted trees \cite{CL2001}.
\subsection{Four possibilities}\label{par:four}
Let $A$ be a vector space and let $\Omega$ be a set with a binary operation $\br$. 
Suppose that $A\ot\bfk\Omega$ is endowed with an $\Omega$-graded pre-Lie product:
\begin{equation}
\mlabel{eq:pre} (x\ot\alpha)\rhd (y\ot\beta):=x\rhd_{\alpha,\beta}y\ot (\alpha \br\beta).
\end{equation}
The pre-Lie axiom
$$(x\ot\alpha)\rhd\Big((y\ot\beta)\rhd(z\ot\gamma)\Big)-\Big((x\ot\alpha)\rhd(y\ot\beta)
\Big)\ot(z\ot\gamma)=(y\ot\beta)\rhd\Big((x\ot\alpha)\rhd(z\ot\gamma)\Big)
-\Big((y\ot\beta)\rhd (x\ot\alpha)\Big)\rhd(z\ot\gamma)$$
together with the $\Omega$-grading are equivalent to
\begin{align}
&x\rhd_{\alpha,\beta\br\gamma}(y\rhd_{\beta,\gamma}z)
\ot\Big(\alpha\br(\beta\br\gamma)\Big)
-(x\rhd_{\alpha,\beta}y)
\rhd_{\alpha\br\beta,\gamma}z\ot\Big((\alpha\br\beta)\br\gamma\Big)\nonumber\\
=& \ y\rhd_{\beta,\alpha\br\gamma}(x\rhd_{\alpha,\gamma}z)
\ot\Big(\beta\br(\alpha\br\gamma)\Big)
-(y\rhd_{\beta,\alpha}x)\rhd_{\beta\br\alpha,\gamma}z
\ot\Big((\beta\br\alpha)\br\gamma\Big).\mlabel{eq:rhd}
\end{align}

\noindent Eq.~(\mref{eq:rhd}) induces four possible different cases.\\

\noindent {\bf Case 1:} let $\alpha\br(\beta\br \gamma)=(\alpha\br\beta)\br\gamma$ for $\alpha,\beta,\gamma\in\Omega.$ Thus $\Omega$ is a semigroup. Then
\begin{equation*}
x\rhd_{\alpha,\beta\br\gamma}(y\rhd_{\beta,\gamma}z)
=(x\rhd_{\alpha,\beta}y)\rhd_{\alpha\br\beta,\gamma}z,\,\text{ for }\, \alpha,\beta,\gamma\in\Omega
\end{equation*}
and we recover the notion of family associative algebra.\\

\noindent {\bf Case 2:} let
\begin{equation}\label{twist}
\alpha\br(\beta\br\gamma)=(\beta\br\alpha)\br\gamma
\end{equation}
for $\alpha,\beta,\gamma\in\Omega.$ Then $\Omega$ is a kind of ``twisted associative semigroup", a notion which has received little attention in the literature (see however \cite{T1967} and \cite{W2009}). We have then
\begin{equation}\label{family-twisted}
x\rhd_{\alpha,\beta\br\gamma}(y\rhd_{\beta,\gamma}z)
=-(y\rhd_{\beta,\alpha}x)\rhd_{\beta\br\alpha,\gamma}z
\end{equation}
and we recover a notion of ``family twisted associative algebra" modulo a minus sign.\\

We now give examples of twisted semigroups, i.e. sets endowed with a binary product $\br$ verifying Eq.~(\mref{twist}): for any set $D$ we consider the set $\mathbb N^D$ of maps form $D$ into the set $\mathbb N=\{0,1,2,3,\ldots\}$ of nonnegative integers. Such a map will be denoted by $\alpha=(\alpha_d)_{d\in D}$. For any $d\in D$, we denote by $\delta_d$ the map such that $\delta_d(d)=1$ and $\delta_d(e)=0$ for any $e\in D\setminus\{d\}$. Any element $\alpha\in\mathbb N^D$ will be represented by the monomial $X^\alpha$ in $d$ variables defined by
$$X^\alpha:=\prod_{d\in D}X_d^{\alpha_d}.$$
Now consider the set
$$T_D:=D\times D\times\mathbb N^D.$$
A generic element of $T_D$ will be denoted by $dd'X^\alpha$ with $d,d'\in D$ and $\alpha\in\mathbb N^D$. Let us now define the product by
$$dd'X^\alpha\br ee'X^\beta:=d'e'X^{\alpha+\delta_d+\beta+\delta_e}.$$
The product verifies Equation (\mref{twist}). Indeed, an easy computation yields
$$dd'X^\alpha\br (ee'X^\beta\br ff'X^\gamma)=(ee'X^\beta\br dd'X^\alpha)\br ff'X^\gamma=d'f'X^{\alpha+\beta+\gamma+\delta_d+\delta_e+\delta_{e'}+\delta_f}.$$
We prove in Paragraph \ref{par:twist} that $T_D$ is the free twisted semigroup generated by $D$, and we give an explicit description of the twist-associative set operad.\\

\noindent {\bf Case 3:} let
\begin{equation}
\left\{
\begin{array}{ll}
\alpha\br(\beta\br\gamma)=\beta\br(\alpha\br\gamma),\\
(\alpha\br\beta)\br\gamma=(\beta\br\alpha)\br\gamma.
\end{array}
\right .
\mlabel{eq:nap}
\end{equation}
The first relation is the NAP condition. We call NAP' the second condition, and we call $\Omega$ a NAPNAP' set. Then
 \begin{equation*}
\left\{
\begin{array}{ll}
x\rhd_{\alpha,\beta\br\gamma}(y\rhd_{\beta,\gamma}z)
=y\rhd_{\beta,\alpha\br\gamma}(x\rhd_{\alpha,\gamma}z),\\
(x\rhd_{\alpha,\beta}y)\rhd_{\alpha\br\beta,\gamma}z
=(y\rhd_{\beta,\alpha}x)\rhd_{\beta\br\alpha,\gamma}z.
\end{array}
\right .
\end{equation*}
We obtain what we shall call \emph{family NAPNAP'} algebras.

We give an example of set endowed with a binary product $\br$ verifying Eq.~(\mref{eq:nap}), namely the set $\underline{\mathbb N}$ of multisets of positive integers (including the empty multiset \o). Let $\underline n:=\{n_1,\ldots,n_k\}, \underline p=\{p_1,\ldots,p_\ell\}$ and $\underline q=\{q_1,\ldots,q_m\}$ be three elements of $\underline{\mathbb N}$. Define the product $\br$ by
\begin{equation*}
  \underline n\br \underline p:=\{n_1+\cdots+n_k+1,p_1,\ldots,p_\ell\}.
\end{equation*}
Then,
\begin{align*}
\underline n\br(\underline p\br \underline q)&=\underline p\br(\underline n\br \underline q)=\{n_1+\cdots +n_k+1,p_1+\cdots +P_\ell+1,q_1,\ldots,q_m\}\\
(\underline n\br \underline p)\br \underline q&=(\underline p\br \underline n)\br \underline q=\{n_1+\cdots +n_k+p_1+\cdots +p_\ell+2,q_1,\ldots,q_m\}.
\end{align*}
We'll prove in Paragraph \ref{par:corolla} that this is the free NAPNAP' set generated by the element \o, by giving an explicit description of the NAPNAP' operad. \\

\noindent\textbf{Case 4:} for any $\alpha,\beta,\gamma \in \Omega$,
\begin{align*}
\alpha\br(\beta\br\gamma)=(\alpha\br\beta)\br\gamma=\beta\br(\alpha\br\gamma)=(\beta\br\alpha)\br\gamma,
\end{align*}
i.e. $\Omega$ is a set-theoretical Perm algebra. Then for any $x,y,z\in A$,
\begin{align*}
x\rhd_{\alpha,\beta\br\gamma}(y\rhd_{\beta,\gamma}z)
-(x\rhd_{\alpha,\beta}y)
\rhd_{\alpha\br\beta,\gamma}z=& \ y\rhd_{\beta,\alpha\br\gamma}(x\rhd_{\alpha,\gamma}z)
-(y\rhd_{\beta,\alpha}x)\rhd_{\beta\br\alpha,\gamma}z.
\end{align*}
This relation is very similar to the pre-Lie one,  and deserves the name "pre-Lie family". We address this last case in Paragraph \ref{par:perm}

\subsection{The twist-associative operad $\mathbf{TAs}$}\label{par:twist}

\ignore{
\begin{defn}
 A species in the symmetric tensor category $\cal{C}$~\cite{Mac}  is a contravariant functor $E$ from the category of finite sets $E_{fin}$ with bijections to
$\cal{C}$. Thus, a species $E$ provides an object $E_A$ for any finite set $A$ and an isomorphism $E_\phi:E_B\ra E_A$ for any bijection $\phi:A\ra B.$
\mlabel{defn:spe}
\end{defn}
%
\begin{defn}
An operad $\cal{P}=(\cal{P}, \gamma,\eta)$ is a species $A\mapsto \cal P_A$ endowed with a monoid structure, i.e. an associative composition map $\gamma:\cal{P}\circ\cal{P}\ra\cal{P}$ and a unit map $\eta:{\bf I}\ra \cal{P}$ where ${\bf I}$ is the species defined by ${\bf I}_A=0$ for $|A|\neq 1$ and ${\bf I}_{\{*\}}=1_{\mathcal C}$.
\end{defn}
}
\begin{defn}
Let $\cal{P}$ be the set species of non-diagonal ordered pairs, defined by
\begin{eqnarray*}
\cal P_{\{*\}}&=&\mathbf 1,\\
\cal P_A&=&\{(a',a'')\in A\times A, \,a'\neq a''\}.
\end{eqnarray*}
for any finite set $A$ of cardinal $\ge 2$. For any bijection $\phi:A\ra B$ where $B$ is a finite set of the same cardinality than $A$, the relabeling isomorphism $\cal P_\phi:\cal P_B\ra \cal P_A$ is defined by
\begin{equation*}
\cal P_\phi(b',b'')=\Big(\phi^{-1}(b'),\phi^{-1}(b'')\Big).
\end{equation*}
\end{defn}

\begin{defn}
Let $A$ and $B$ be two finite sets.
Define partial compositions
\begin{equation*}
\circ_a: \calp_{A}\ot\calp_{B} \quad \ra\calp_{A\sqcup B\backslash\{a\}},\,\text{ for }\, a\in A.
\end{equation*}
as follows: for any ordered pair $(a',a'')\in \cal P_A$ and $(b',b'')\in \cal P_B$, we set
\begin{equation}
\mlabel{eq:com}(a',a'')\circ_a(b',b'')=
\left\{
\begin{array}{ll}
(b'',a''), & \text{ if } a=a';\\
(a',b''), & \text{ if } a=a'';\\
(a',a''), & \text{ if } a\notin \{a',a''\}.
\end{array}
\right .
\end{equation}
Partial compositions are extended to singletons by setting $\mathbf 1$ as the unit.
\end{defn}

\begin{prop}\mlabel{prop:operad}
The species $\cal P$ together with the partial compositions $\circ_a$ defined by Eq.~(\mref{eq:com}) is an operad.
\end{prop}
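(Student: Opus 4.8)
The plan is to verify, one after another, the defining axioms of an operad (a monoid in positive species, in the sense recalled above): that each $\circ_a$ lands in the stated species and that $\calp$ is positive, that $\calp_{\{*\}}=\mathbf 1$ is a two-sided unit, the sequential and parallel associativity identities, and finally equivariance under the relabeling isomorphisms $\calp_\phi$. Well-definedness and positivity are immediate: $\calp_\emptyset=\emptyset=0_{\mathcal C}$ (there is no pair of distinct elements of the empty set), and given $(a',a'')\in\calp_A$, $(b',b'')\in\calp_B$ and $a\in A$, each of the three cases of \eqref{eq:com} produces a pair of \emph{distinct} elements of $(A\setminus\{a\})\sqcup B$, since $a'\neq a''$, $b'\neq b''$, and $A\setminus\{a\}$ and $B$ lie in complementary parts of that disjoint union. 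The unit axioms are equally transparent: composing the unique element of $\calp_{\{*\}}=\mathbf 1$ into a slot $a$ just relabels $a$ by $*$, while composing an element of $\calp_A$ into the single slot of the unit returns it unchanged. So only associativity for triples of pairs needs work.

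The key device is to read off \eqref{eq:com} coordinate by coordinate. For $u=(a',a'')\in\calp_A$ put $p_1(u)=a'$, $p_2(u)=a''$, and extend $p_2$ to the unit by letting $p_2$ of the unit be its unique input $*$. Then \eqref{eq:com} is exactly the statement that, for $i\in\{1,2\}$,
\[
p_i(u\circ_a v)=\begin{cases} p_2(v) & \text{if } a=p_i(u),\\ p_i(u) & \text{if } a\neq p_i(u).\end{cases}
\]
Since a pair is determined by its two coordinates, this formula determines $u\circ_a v$ completely. Its crucial feature --- and the one point requiring a bit of care --- is the built-in asymmetry: whichever coordinate of $u$ is hit, it is \emph{always} the second coordinate $p_2(v)$ of the inserted pair that takes its place, never $p_1(v)$.

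Granting this formula, sequential associativity $\alpha\circ_a(\beta\circ_b\gamma)=(\alpha\circ_a\beta)\circ_b\gamma$ (with $a\in A$, $b\in B$) follows by evaluating $p_i$ of both sides and splitting on whether $a=p_i(\alpha)$: if $a\neq p_i(\alpha)$ both sides reduce to $p_i(\alpha)$, using that $p_i(\alpha\circ_a\beta)=p_i(\alpha)\in A$ is then distinct from $b\in B$; if $a=p_i(\alpha)$ both sides reduce to $p_2(\beta\circ_b\gamma)$, since $p_i(\alpha\circ_a\beta)=p_2(\beta)$ and one further application of the formula at coordinate $2$ closes the computation. Parallel associativity $(\alpha\circ_a\beta)\circ_{a'}\beta'=(\alpha\circ_{a'}\beta')\circ_a\beta$ (with $a\neq a'$ in $A$) is handled the same way, splitting on whether $p_i(\alpha)$ equals $a$, equals $a'$, or neither, and using that $A$, $B$, $B'$ occupy complementary parts of the ambient disjoint union. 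Finally, equivariance under $\calp_\phi$ is immediate from \eqref{eq:com}, since the partial composition is described purely by equality-testing and copying of elements, operations that commute with bijective relabelings. The main subtlety throughout is simply to install the coordinatewise formula with the asymmetry on the correct side; beyond that only routine case-checking remains, so I anticipate no genuine obstacle.
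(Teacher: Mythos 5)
Your proof is correct, and it organizes the verification differently from the paper. The paper proves sequential and parallel associativity by exhaustive case analysis: it tabulates the nine cases for sequential associativity (according to whether $a$ equals $a'$, $a''$, or neither, crossed with the analogous trichotomy for $b$) and the seven cases for parallel associativity, and checks each line of the two tables directly against Eq.~\eqref{eq:com}. Your coordinatewise reformulation
$p_i(u\circ_a v)=p_2(v)$ if $a=p_i(u)$ and $p_i(u\circ_a v)=p_i(u)$ otherwise
is a faithful restatement of Eq.~\eqref{eq:com} (I checked all three branches), and it is the right structural observation: since the two coordinates of $u\circ_a v$ are computed independently of one another, each associativity identity reduces to a statement about a single coordinate, collapsing the paper's $9$ and $7$ cases to a two- or three-way split per coordinate. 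The one point of care, which you correctly flag, is the asymmetry that the replacing element is always $p_2(v)$ regardless of which coordinate of $u$ is hit; this is exactly what makes the subcase $a=p_i(\alpha)$ of sequential associativity close, since both sides then reduce to $p_2(\beta\circ_b\gamma)$. Your treatment of the unit (extending $p_2$ to $\mathbf 1$ by its unique input), of positivity, of well-definedness (the output pair is non-diagonal because the two parts of the disjoint union are complementary), and of equivariance are all points the paper leaves implicit or dispatches in one line, so nothing is missing. What your approach buys is a shorter and more transparent proof with an identifiable key lemma; what the paper's tables buy is that the reader can verify each case against the raw definition without trusting the intermediate reformulation.
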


\begin{proof}
Let $A, B, C$ be three sets of cardinal $\ge 2$, and let $x=(a',a'')\in A\times A, y=(b',b'')\in B\times B, z=(c',c'')\in C\times C.$ When we prove sequential associativity, \ignore{and parallel associativity} there are nine cases to consider.
$$\begin{tabular}{|c|c|c|c|}
  \hline
  1 & $a=a',b=b'$ & 2 & $a=a',b=b''$\\
  \hline
  3 & $a=a',b\notin\{b',b''\}$ & 4 & $a=a'',b=b'$ \\
  \hline
  5 & $a=a'',b=b''$ & 6 & $a=a'',b\notin\{b',b''\}$ \\
  \hline
  7 & $a\notin \{a',a''\},b=b'$ & 8 & $a\notin \{a',a''\},b=b''$ \\
  \hline
  9 & $a\notin \{a',a''\},b\notin\{b',b''\}$ &  &  \\
  \hline
\end{tabular}$$
\hspace{4cm} \title{ Table: the nine cases for sequential associativity}\mlabel{title:table}\\

\noindent The case-by-case proof is displayed on the following table:
$$\begin{tabular}{|c|c|c|}
  \hline
  & $(x\circ_a y)\circ_b z=\Big((a',a'')\circ_a(b',b'')\Big)\circ_b(c',c'')$ & $x\circ_a (y\circ_b z)=(a',a'')\circ_a\Big((b',b'')\circ_b(c',c'')\Big)$ \\
  \hline
  \hline
  1& $(b'',a'')\circ_b(c',c'')=(b'',a'')$ & $(a',a'')\circ_a(c'',b'')=(b'',a'')$ \\
  \hline
  2& $(b'',a'')\circ_b(c',c'')=(c'',a'')$ & $(a',a'')\circ_a(b',c'')=(c'',a'')$ \\
  \hline
  3& $(b'',a'')\circ_b(c',c'')=(b'',a'')$ & $(a',a'')\circ_a(b',b'')=(b'',a'')$ \\
  \hline
  4& $(a',b'')\circ_b(c',c'')=(a',b'')$ & $(a',a'')\circ_a(c'',b'')=(a',b'')$ \\
  \hline
  5& $(a',b'')\circ_b(c',c'')=(a',c'')$ & $(a',a'')\circ_a(b',c'')=(a',c'')$ \\
  \hline
  6& $(a',b'')\circ_b(c',c'')=(a',b'')$ & $(a',a'')\circ_a(b',b'')=(a',b'')$ \\
  \hline
  7& $(a',a'')\circ_b(c',c'')=(a',a'')$ & $(a',a'')\circ_a(c'',b'')=(a',a'')$ \\
  \hline
  8& $(a',a'')\circ_b(c',c'')=(a',a'')$ & $(a',a'')\circ_a(b',c'')=(a',a'')$ \\
  \hline
  9& $(a',a'')\circ_b(c',c'')=(a',a'')$ & $(a',a'')\circ_a(b',b'')=(a',a'')$ \\
  \hline
\end{tabular}$$

Let us now turn to parallel associativity. There are seven cases to consider. Here $a$ and $\overline a$ stand for two different elements in $A$.
$$\begin{tabular}{|c|c|c|c|}
  \hline
  1 & $a=a',\ \overline a=a''$ & 2 & $a=a',\ \overline a\notin\{a',a''\}$\\
  \hline
  3 & $a=a'',\ \overline a=a'$ & 4 & $a=a'',\ \overline a\notin\{a',a''\}$ \\
  \hline
  5 & $a\notin\{a',a''\},\ \overline a=a'$ & 6 & $a\notin\{a',a''\},\ \overline a=a''$ \\
  \hline
  7 & $a\notin \{a',a''\},\ \overline a\notin\{a',a''\}$ &   &  \\
  \hline
\end{tabular}$$
\hspace{4cm} \title{ Table: the seven cases for parallel associativity}\mlabel{title:tablebis}\\

\noindent The case-by-case proof is displayed on the following table:
$$\begin{tabular}{|c|c|c|}
  \hline
  & $(x\circ_a y)\circ_{\overline a} z=\Big((a',a'')\circ_a(b',b'')\Big)\circ_{\overline a}(c',c'')$ & $(x\circ_{\overline a} z) \circ_a y=\Big((a',a'')\circ_{\overline a}(c',c'')\Big)\circ_{a}(c',c'')$ \\
  \hline
  \hline
  1& $(b'',a'')\circ_{\overline a}(c',c'')=(b'',c'')$ & $(a',c'')\circ_a(b',b'')=(b'',c'')$ \\
  \hline
  2& $(b'',a'')\circ_{\overline a}(c',c'')=(b'',a'')$ & $(a',a'')\circ_a(b',b'')=(b'',a'')$ \\
  \hline
  3& $(a',b'')\circ_{\overline a}(c',c'')=(c'',b'')$ & $(c'',a'')\circ_a(b',b'')=(c'',b'')$ \\
  \hline
  4& $(a',b'')\circ_{\overline a}(c',c'')=(a',b'')$ & $(a',a'')\circ_a(b',b'')=(a',b'')$ \\
  \hline
  5& $(a',a'')\circ_{\overline a}(c',c'')=(c'',a'')$ & $(c'',a'')\circ_a(b',b'')=(c'',a'')$ \\
  \hline
  6& $(a',a'')\circ_{\overline a}(c',c'')=(a',c'')$ & $(a',c'')\circ_a(b',b'')=(a',c'')$ \\
  \hline
  7& $(a',a'')\circ_{\overline a}(c',c'')=(a',a'')$ & $(a',a'')\circ_a(b'',b'')=(a',a'')$ \\
  \hline
\end{tabular}$$
Extending to the case where $A$, $B$ or $C$ has only one element is straightforward and left to the reader.
\end{proof}
\begin{prop}\mlabel{prop:twist}
Let $A=\{1,2\}$, let $\mu=(1,2)\in\cal P_A$, and let $\overline \mu=(2,1)$ be the other element of $\cal P_A$ obtained by permutation. The twist-associativity relation
\begin{equation}\label{relations-operad}
r=\mu\circ_2\mu-\mu\circ_1\bar{\mu}=0
\end{equation}
holds in the operad $\cal P$.
\end{prop}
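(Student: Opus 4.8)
Proof plan. The plan is to compute the two partial composites $\mu\circ_2\mu$ and $\mu\circ_1\bar\mu$ explicitly as elements of $\cal P_{\{1,2,3\}}$ by a single application each of the composition rule \eqref{eq:com}, and to check that they coincide. The one point that needs care is the relabeling forced by the operad structure: in $\mu\circ_2\mu$ the outer factor is $\mu=(1,2)$ and the inner factor, grafted at the second leaf, carries the input labels $\{2,3\}$, hence is the ordered pair $(2,3)$; in $\mu\circ_1\bar\mu$ the inner factor, grafted at the first leaf, keeps the input labels $\{1,2\}$ and is the pair $(2,1)$, while the outer $\mu$, whose surviving input now becomes the third leaf, is the pair $(1,3)$.

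First I would compute $\mu\circ_2\mu=(1,2)\circ_2(2,3)$: here the composition index $a=2$ coincides with the second entry $a''=2$ of $(1,2)$, so the middle clause of \eqref{eq:com} applies and gives $(a',b'')=(1,3)$. Next I would compute $\mu\circ_1\bar\mu=(1,3)\circ_1(2,1)$: here the composition index $a=1$ coincides with the first entry $a'=1$ of $(1,3)$, so the first clause of \eqref{eq:com} applies and gives $(b'',a'')=(1,3)$. Since both composites equal $(1,3)\in\cal P_{\{1,2,3\}}$, it follows that $r=\mu\circ_2\mu-\mu\circ_1\bar\mu=0$, as claimed.

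I do not expect a genuine obstacle here: once the labels of the two inner operations are fixed correctly, the whole argument reduces to evaluating the first two clauses of \eqref{eq:com} once each, and the lengthy case analysis used to establish the operad axioms in Proposition \ref{prop:operad} is not needed for this particular relation. As a consistency check, reading $\mu=(1,2)$ as the binary operation $x_1\rhd x_2$ one sees that $\mu\circ_2\mu$ and $\mu\circ_1\bar\mu$ represent $x_1\rhd(x_2\rhd x_3)$ and $(x_2\rhd x_1)\rhd x_3$ respectively, which agree by twist-associativity \eqref{twist}; so the fact that $\cal P$ sends both to the single pair $(1,3)$ is exactly what one expects, and it is the first indication that $\cal P$ is a model of the twist-associative operad $\mathbf{TAs}$.
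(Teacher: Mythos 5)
Your proof is correct and follows essentially the same route as the paper: a direct one-step application of the relevant clauses of the composition rule to each side. The only cosmetic difference is that you relabel both composites onto $\{1,2,3\}$ from the outset (obtaining $(1,3)$ twice), whereas the paper keeps the two copies of $A$ distinct, lands in $\calp_{\{1,a,b\}}$ and $\calp_{\{a,b,2\}}$, and identifies the two three-element sets by an explicit bijection afterwards; your relabeling agrees with that identification.
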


\begin{proof}
Denoting by $\{a,b\}$ another copy of $A$ (identifiying $a$ with $1$ and $b$ with $2$), both three-element sets $A\sqcup_2 A$ and $A\sqcup_1 A$ must be identified by means of the bijection
\usetikzlibrary{fit,shapes.geometric}
\[\tikz[x=0.5cm,y=0.4cm,every node/.style={minimum width=1.5em,minimum height=2ex,inner sep=0pt}]{
\node (l1) at (0,0) {$1$};
\node (l2) at (0,-1) {$a$};
\node (l3) at (0,-2) {$b$};
\node (r1) at (2,0) {$a$};
\node (r2) at (2,-1) {$b$};
\node (r3) at (2,-2) {$2$};
\draw[red,->] (l1)--(r1);
\draw[red,->] (l2)--(r2);
\draw[red,->] (l3)--(r3);
\node[ellipse,draw,fit=(l1)(l2)(l3),inner sep=-2.5pt]{};
\node[ellipse,draw,fit=(r1)(r2)(r3),inner sep=-2.5pt]{};
}
\]
in order to make Equation \eqref{relations-operad} consistent. We get then
\begin{align*}
\mu\circ_2\mu&=(1,2)\circ_2(a,b)=(1,b)\in\calp_{\{1,a,b\}},\\
\mu\circ_1\bar{\mu}&=(1,2)\circ_1(b,a)=(a,2)\in\calp_{\{a,b,2\}},
\end{align*}
hence $\mu\circ_2\mu=\mu\circ_1\bar{\mu}$ modulo the identification above.
\end{proof}
\noindent For later use, for any $A,B$ finite sets we define the product $\br:\calp_A\otimes\calp_B\to\calp_{A\sqcup B}$ by
$$\alpha\br\beta:=(\mu\circ_1\alpha)\circ_2\beta.$$
An easy computation yields:
\begin{eqnarray}
\mathbf 1\br\mathbf 1&=&\mu,\\
\mathbf 1\br(x,y)&=&(*,y),\label{starleft}\\
(x,y)\br\mathbf 1&=&(y,*),\\
(x,y)\br(z,t)&=&(y,t)
\end{eqnarray}
for any $x,y\in A$ and $z,t\in B$ with $x\neq y$ and $z\neq t$. It is easily checked that the product verifies the twist-associative identity
\begin{equation}\label{tai}
\alpha\br(\beta\br \gamma)=(\beta\br \alpha)\br \gamma
\end{equation}
for any finite sets $A,B,C$ and for any $\alpha\in\calp_A$, $\beta\in\calp_B$ and $\gamma\in\calp_C$.
\begin{theorem}\label{thm:twist}
The operad $\cal P$ of non-diagonal ordered pairs is isomorphic to the twist-associative operad $\cal T:=\mathcal M\Big/\langle r\rangle$.
\end{theorem}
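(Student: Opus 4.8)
The plan is to exhibit an explicit isomorphism of operads $\Phi:\cal T\to\cal P$ and show it is well-defined, surjective and injective. Recall $\cal T=\mathcal M/\langle r\rangle$ where $\mathcal M$ is the free (non-sigma, but here we work with the symmetric species) operad on one binary generator $\mu_{\mathcal M}$, and $r=\mu_{\mathcal M}\circ_2\mu_{\mathcal M}-\mu_{\mathcal M}\circ_1\overline{\mu_{\mathcal M}}$. By the universal property of the free operad, there is a unique operad morphism $\widetilde\Phi:\mathcal M\to\cal P$ sending the generator $\mu_{\mathcal M}\in\mathcal M_{\{1,2\}}$ to $\mu=(1,2)\in\cal P_{\{1,2\}}$. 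By Proposition~\ref{prop:twist}, the relation $r$ maps to $0$ in $\cal P$, so $\widetilde\Phi$ factors through a morphism $\Phi:\cal T\to\cal P$. First I would carry out this formal reduction carefully, checking equivariance under relabelling (the generator is symmetric in neither argument, but $\cal P_{\{1,2\}}$ has exactly the two elements $\mu,\overline\mu$ and the $S_2$-action swaps them, matching the $S_2$-action on $\mu_{\mathcal M}$).

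Next I would compute $\cal T_n$ explicitly, or rather bound its dimension from above, using the relation to put every tree into a normal form. The key observation: in $\cal M$, iterated compositions of $\mu$ produce all binary trees with leaves labelled by $A$; the twist-associativity relation \eqref{tai} (equivalently \eqref{relations-operad}) lets one rewrite $\alpha\br(\beta\br\gamma)$ as $(\beta\br\alpha)\br\gamma$, and together with the formulas \eqref{starleft}--and the ones following it for $\br$ on $\cal P$ one sees that any element of $\cal T_A$ can be rewritten as a ``left comb'' $\big(\cdots((x_{i_1}\br x_{i_2})\br x_{i_3})\cdots\big)$ up to reordering the first two entries. Concretely I expect that the normal form is determined by: the label appearing at the ``deepest left'' leaf and the label at the second leaf (these get symmetrised by the relation), which together with the fixed left-to-right reading of the remaining leaves shows $\dim_{\bfk}\cal T_A\le |A|(|A|-1)$ for $|A|\ge 2$ and $\dim\cal T_{\{*\}}=1$. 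This matches $\#\cal P_A=|A|(|A|-1)$ exactly.

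Then surjectivity of $\Phi$ is easy: using the $\br$ product on $\cal P$ defined just before the theorem, every non-diagonal pair $(a',a'')\in\cal P_A$ is reached — indeed by the displayed formulas $(x,y)\br(z,t)=(y,t)$ etc., one builds $(a',a'')$ as an iterated $\br$-product of the singletons in a suitable order, and each such iterated product lies in the image of $\Phi$ since $\Phi(\mu_{\mathcal M})=\mu$ and $\Phi$ is an operad morphism (so it commutes with $\br$, which is itself built from partial compositions with $\mu$). With surjectivity plus the dimension bound $\dim\cal T_A\le|A|(|A|-1)=\#\cal P_A$ in hand, $\Phi_A$ is forced to be a bijection for every finite $A$, hence $\Phi$ is an isomorphism of operads.

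The main obstacle is the normal-form argument establishing $\dim\cal T_A\le|A|(|A|-1)$: one must show that the single relation $r$, propagated through all partial compositions (i.e. the operadic ideal it generates), suffices to collapse the $\mathrm{Cat}_{|A|-1}\cdot|A|!$-dimensional space $\mathcal M_A$ down to dimension $|A|(|A|-1)$. I would handle this by an induction on $|A|$: show first that in $\cal T$ every tree equals one of the form $\sigma\br x$ for a leaf $x$ and $\sigma\in\cal T_{A\setminus\{x\}}$ (pushing all the bracketing to the left using \eqref{tai}), then show the resulting expression depends only on the ordered pair (first leaf of the comb, second leaf of the comb) because $\alpha\br\beta$ for $\alpha,\beta$ themselves combs only remembers their ``last'' entries — mirroring exactly the formula $(x,y)\br(z,t)=(y,t)$ in $\cal P$. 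The bookkeeping of how relabelling isomorphisms intervene when identifying $A\sqcup_a B$ with a standard set (as in the proof of Proposition~\ref{prop:twist}) is the delicate point to get right.
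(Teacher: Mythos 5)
Your first half --- factoring the free-operad morphism through $\langle r\rangle$ via Proposition~\ref{prop:twist} and then getting surjectivity of $\Phi$ from the $\br$-product --- coincides with the paper's proof. Where you genuinely diverge is injectivity: the paper never counts dimensions, but instead constructs an explicit inverse $\Psi$ by induction on arity, writing $(x,y)=\mathbf 1\br(x',y)$ as in \eqref{starleft}, setting $\Psi(x,y):=\mathbf 1\rhd\Psi(x',y)$, checking independence of the auxiliary choice of $x'$, proving $\Psi(\beta\br\gamma)=\Psi(\beta)\rhd\Psi(\gamma)$, and deducing $\Phi\Psi=\Psi\Phi=\mathrm{Id}$. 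Your surjectivity-plus-upper-bound strategy is a legitimate alternative and would buy a cleaner conclusion (no need to verify that $\Psi$ is an operad morphism), but as written its key step has a gap.

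The gap is in the normal-form claim. You say the class of a left comb is determined by ``the deepest-left leaf and the second leaf'', symmetrised, with the remaining leaves in a fixed order. That is the wrong invariant: symmetrising an ordered pair yields $\binom{|A|}{2}$ classes, which is inconsistent with the surjection onto the $|A|(|A|-1)$ elements of $\cal P_A$; and the pair that actually survives is the one of the two leaves \emph{closest to the root}, since the comb $(\cdots((x_1\rhd x_2)\rhd x_3)\cdots)\rhd x_n$ is sent to $(x_{n-1},x_n)$, not to anything involving $x_1,x_2$. What must be proved, \emph{inside} $\cal T$ using only the relation \eqref{taibis}, is (i) every tree reduces to a left comb, and (ii) any two left combs sharing the same last two leaves coincide --- there are $(n-2)!$ of them for each ordered pair, so this is a substantial collapse; already $((a\rhd b)\rhd c)\rhd d=((b\rhd a)\rhd c)\rhd d$ takes a chain of five applications of the relation. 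Your stated justification, that the comb ``only remembers its last entries, mirroring exactly the formula $(x,y)\br(z,t)=(y,t)$ in $\cal P$'', is circular: that identity holds in the image $\cal P$ by construction and says nothing about identifications in $\cal T$. Until (i) and (ii) are established by an explicit rewriting induction, the bound $\dim_\bfk\cal T_A\le|A|(|A|-1)$ is unproved and the counting argument does not close.
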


\begin{proof}
We still adopt the notations in the proof of Proposition~\mref{prop:twist}.
The twist-associative operad is defined as the quotient of the magmatic operad $\mathcal M$ (the free operad generated by a single binary operation $\nu$) by the ideal $\langle r\rangle$ generated by the twist-associative relation $r=\nu\circ_2\nu-\nu\circ_1\bar{\nu}$. Let $A,B,C$ be three finite sets. Defining $\widetilde{\nu}$ as the image of $\nu$ in the quotient, we have
\begin{equation}\label{taibis}
\alpha\rhd(\beta\rhd\gamma)=(\beta\rhd\alpha)\rhd\gamma
\end{equation}
for any $\alpha\in\mathcal T_A$, $\beta\in\mathcal T_B$ and $\gamma\in\mathcal T_C$, where $\rhd$ is defined by
$$\alpha\rhd\beta:=(\widetilde \nu\circ_1\alpha)\circ_2\beta.$$
As the ordered pair $\mu=(1,2)$ verifies the twist-associative relation \eqref{relations-operad}, there is a unique surjective operad morphism $\Phi:\cal T\ra\cal P$ such that $\Phi(\widetilde\nu)=\mu$. It obviously verifies
\begin{equation}
\Phi(\alpha\rhd\beta)=\Phi(\alpha)\br\Phi(\beta)
\end{equation}
for any $\alpha\in\cal T_A$ and $\beta\in\cal T_B$. Let us prove that $\Phi$ is bijective. Define $\Psi_A:\calp_A\ra \cal T_A$ by induction on the arity $n=|A|\geq 2$. For $n=1$ we set $\Psi(\mathbf 1)=\mathbf 1$, and for $n=2$ it amounts to $\Psi(\mu)=\widetilde\nu$. Suppose that the inverse $\Psi$ of $\Phi$ is well-defined (and hence bijective) up to arity $n$, and let $A$ be of cardinality $n+1$. From \eqref{starleft}, any $(x,y)\in\calp_A$ can be written $(x,y)=\mathbf 1\br(x',y)$, where $\mathbf 1\in\calp_{\{x\}}$ and $(x',y)\in\calp_{A\setminus\{x\}}$. Hence we necessarily have
$$\Psi(x,y)=\mathbf 1\rhd\Psi(x',y).$$
It is well defined because it does not depend on the choice of $x'$. Indeed, if another choice $x''$ is possible, then
$$(x,y)=\mathbf 1\br(x'',y)=\mathbf 1\br\big(\mathbf 1\br (x',y)\big),$$
hence
\begin{eqnarray*}
\mathbf 1\rhd\Psi(x'',y)&=&\mathbf 1\rhd\big(\mathbf 1\rhd \Psi(x',y)\big),\hskip 6mm (x',y)\in A\setminus\{x,x''\}\\
&=&\mathbf 1\rhd\big(\mathbf 1\rhd \Psi(x'',y)\big),\hskip 6mm (x'',y)\in A\setminus\{x,x'\}\hbox{ (by induction hypothesis)},\\
&=&\mathbf 1\rhd\Psi(x',y)\hskip 6mm\hbox{(again by induction hypothesis)}.
\end{eqnarray*}
We have
$$\Phi\Psi(x,y)=\mathbf 1\rhd \Phi\Psi(x',y)=\mathbf 1\rhd (x',y)=(x,y)$$
by induction hypothesis, hence $\Phi_A\Psi_A=\hbox{Id}_{\calp_A}$. Furthermore, for any partition $A=B\sqcup C$ and for any $\beta\in\cal P_B, \gamma\in\cal P_C$ we have
$$\Psi(\beta\br\gamma)=\Psi(\beta)\rhd\psi(\gamma).$$
This is easily proven by induction on the cardinality of $B$, the case $|B|=1$ being equivalent to the definition of $\Psi$: if $|B|\ge 2$ we write $\beta=\mathbf 1\br \beta'$ and then
\begin{eqnarray*}
\Psi(\beta\br\gamma)&=&\Psi\big((\mathbf 1\br \beta')\br\gamma\big)\\
&=&\Psi\big(\beta'\br(\mathbf 1\br\gamma)\big) \hbox{ (by \eqref{tai})}\\
&=&\Psi(\beta')\rhd\Psi(\mathbf 1\br\gamma) \hbox{ (by induction on }|B|)\\
&=&\Psi(\beta')\rhd\big(\mathbf 1\rhd\Psi(\gamma)\big)\\
&=&\big(\mathbf 1\rhd\Psi(\beta')\big)\rhd\Psi(\gamma)\hbox{ (by \eqref{taibis})}\\
&=&\Psi(\beta)\rhd\Psi(\gamma).
\end{eqnarray*}

Now any $\alpha\in\cal T_A$ can be written $\beta\rhd\gamma$ with $\beta\in\cal T_B$, $\gamma\in\cal T_C$, where $B$ and $C$ are two finite sets of cardinality $\le n$ such that $A=B\sqcup C$. We have then
$$\Psi\Phi(\alpha)=\Psi\Phi(\beta\rhd\gamma)=\Psi\Phi(\beta)\rhd\Psi\Phi(\gamma)=\beta\rhd\gamma=\alpha$$
again by induction hypothesis, hence $\Psi_A\Phi_A=\hbox{Id}_{\cal T_A}$. This ends up the proof of Theorem \ref{thm:twist}.
\end{proof}
Let us remark that, forgetting the labels and putting instead a decoration by a given set $D$, we recover the description of the free twisted associative semigroup generated by $D$ given in Paragraph \ref{par:four}.
\subsection{The operad $\mathbf{NAPNAP}'$ of corollas}\label{par:corolla}
\begin{defn}
A \textbf{corolla structure} $\beta$ on a finite set $B$ is a quasi-order admitting one unique minimum $r$, such that any element different from $r$ is a maximum.
\end{defn}
The unique minimum $r$ is the \textsl{root} of the corolla. Any $b\neq r$ verifies $r\le b$ but never $b\le r$. The non-root elements are partitioned into \textsl{branches} $B_1,\ldots,B_p$, which are the equivalence classes (excluding the one of the root) under the relation $\sim$ defined by $b\sim b'$ if and only if $b\le b'$ and $b'\le b$. We shall write
$$
\beta=[B_1,\ldots,B_p]_r.
$$
For example, on the finite set $B:\{a,b,c,d,e,f,g\}$, the notation $\beta=[\{b,c\},\{d\},\{e,f,g\}]_a$ stands for the corolla
$$
\beta=\treeo { \eoo `a@
\draw (o) \eee -1,1`l`bc@ (o)--(l) ;
\draw (o) \eee 0,1`a`d@ (o)--(a) ;
\draw (o) \eee 1.3,1`r`{efg}@ (o)--(r) ;  }
$$
Let $\mathbb K_B$ be the set of corolla structures on $B$. This forms a set species: any bijection $\varphi:B\to C$ induces a bijection $\mathbb K_\varphi:\mathbb K_C\to\mathbb K_B$ by relabeling.\\

Now let us define the operad structure. Let $B,C$ be two finite sets, let $b\in B$, let $\beta\in\mathbb K_B$ and $\gamma\in\mathbb K_C$. Let $r$ be the root of the corolla $\gamma$. The partial composition $\beta\circ_b\gamma:\mathbb K_B\times\mathbb K_C\to\mathbb K_{B\sqcup_b C}$ is defined as follows:
\begin{itemize}
\item if $b$ is the root of $\beta$, then $\beta\circ_b\gamma$ is the corolla on $B\sqcup_b C$ obtained by choosing $r$ as the root, and by keeping all branches in $B\sqcup_b C\setminus\{r\}$. In particular, elements in $B\setminus\{b\}$ and elements in $C\setminus\{r\}$ belong to different branches, and thus are uncomparable.
\item if $b$ is not the root of $\beta$, then $\beta\circ_b\gamma$ is the corolla on $B\sqcup_b C$ obtained by replacing $b$ by the whole $C$ in the branch of $b$.
\end{itemize}
Let us give an example for better understanding.
\begin{align*}
\treeo { \eoo `a@
\draw (o) \eee -1,1`l`bc@ (o)--(l) ;
\draw (o) \eee 0,1`a`d@ (o)--(a) ;
\draw (o) \eee 1.3,1`r`{efg}@ (o)--(r) ;  }
\circ_a \treeo{
\eoo`1@
\draw (o)
\eee-1,1`l`23@
(o)--(l)
;
\draw (o)
\eee1,1`r`456@
(o)--(r)
;
}
&=
\treeo { \eoo `1@
\draw (o) \eee -2,1`l1`bc@ (o)--(l1) ;
\draw (o) \eee -1,1`l2`d@ (o)--(l2) ;
\draw (o) \eee 0,1`a`efg@ (o)--(a) ;
\draw (o) \eee 2.5,1`r1`456@ (o)--(r1) ;
\draw (o) \eee 1.2,1`r2`23@ (o)--(r2) ;  }\\
\treeo { \eoo `a@
\draw (o) \eee -1,1`l`bc@ (o)--(l) ;
\draw (o) \eee 0,1`a`d@ (o)--(a) ;
\draw (o) \eee 1.3,1`r`{efg}@ (o)--(r) ;  }
\circ_e \treeo{
\eoo`1@
\draw (o)
\eee-1,1`l`23@
(o)--(l)
;
\draw (o)
\eee1,1`r`456@
(o)--(r)
;
}
&=\treeo { \eoo `a@
\draw (o) \eee -1,1`l`bc@ (o)--(l) ;
\draw (o) \eee 0,1`a`d@ (o)--(a) ;
\draw (o) \eee 2.0,1`r`{fg123456}@ (o)--(r) ;  }
\end{align*}
We leave it to the reader to show that $\mathbb K$ endowed with the partial compositions defined above is an operad, i.e. prove both sequential and parallel associativity axioms. Now define the product $\br$ on $\mathbb K$ by
$$\beta\br\gamma:=(\treeo{
\eoo`2@
\draw (o)
\eee0,1`l`1@
(o)--(l);} \circ_1\beta)\circ_2\gamma.$$
\begin{prop}
The product $\br$ verifies for any $\alpha,\beta,\gamma\in\mathbb K$:
\begin{enumerate}
\item $\alpha\br(\beta\br\gamma)=\beta\br(\alpha\br\gamma)$,
\item $(\alpha\br\beta)\br\gamma=(\beta\br\alpha)\br\gamma$.
\end{enumerate}
\end{prop}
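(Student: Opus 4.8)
The plan is to compute the product $\br$ on corollas explicitly once and for all, and then to read both identities off the resulting closed formula. Write an arbitrary $\gamma\in\mathbb K_C$ as $\gamma=[C_1,\ldots,C_q]_r$, where $r$ is its root and $C_1,\ldots,C_q$ its branches (with $q=0$ when $\gamma=\mathbf 1_{\{r\}}$), and let $\beta$ be any corolla structure on a finite set $B$. I claim
$$\beta\br\gamma=[B,\,C_1,\ldots,C_q]_r\in\mathbb K_{B\sqcup C};$$
in words, the whole underlying set of $\beta$ becomes one new branch attached to $\gamma$ above its root $r$, while the internal structure of $\beta$ is forgotten. To prove this I would unwind $\beta\br\gamma=(\nu\circ_1\beta)\circ_2\gamma$ with $\nu=[\{1\}]_2$: since $1$ is not the root of $\nu$, the first partial composition replaces $1$ by the whole set $B$ in the unique branch $\{1\}$, yielding the corolla $[B]_2$ on $\{2\}\sqcup B$; since $2$ is then the root of $[B]_2$, the second partial composition chooses $r$ as the new root and retains all branches, namely $B$ together with $C_1,\ldots,C_q$. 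The degenerate cases ($|B|=1$, or $q=0$) are handled by the very same computation.

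Granting the formula, the identity $\alpha\br(\beta\br\gamma)=\beta\br(\alpha\br\gamma)$ is immediate: with $\alpha$ on $A$, $\beta$ on $B$ and $\gamma=[C_1,\ldots,C_q]_r$ on $C$, the formula gives $\beta\br\gamma=[B,C_1,\ldots,C_q]_r$, hence
$$\alpha\br(\beta\br\gamma)=[A,\,B,\,C_1,\ldots,C_q]_r,\qquad
\beta\br(\alpha\br\gamma)=[B,\,A,\,C_1,\ldots,C_q]_r,$$
both living on $A\sqcup B\sqcup C$. Since a corolla structure is precisely a partition of the non-root elements into (unordered) branches, these two corollas coincide.

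For the identity $(\alpha\br\beta)\br\gamma=(\beta\br\alpha)\br\gamma$, apply the formula with $\gamma$ again as the right-hand factor but now with the corolla $\alpha\br\beta$ on $A\sqcup B$ as the left-hand factor: one gets $(\alpha\br\beta)\br\gamma=[A\sqcup B,\,C_1,\ldots,C_q]_r$, the entire underlying set $A\sqcup B$ forming a single branch (so that the internal structure of $\alpha\br\beta$ is irrelevant), and likewise $(\beta\br\alpha)\br\gamma=[B\sqcup A,\,C_1,\ldots,C_q]_r$. These agree because $A\sqcup B=B\sqcup A$ as sets.

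The only step requiring genuine care is the derivation of the closed formula for $\br$: one must keep track of the two cases ("the chosen input is / is not the root") in the partial composition of corollas, and of the underlying sets on which the two successive partial compositions $\nu\circ_1\beta$ and $(-)\circ_2\gamma$ are performed — a bookkeeping matter analogous to the one encountered in the proof of Theorem~\ref{thm:twist}. Once that formula is in hand, both relations reduce to the trivial facts that the branches of a corolla are unordered and that disjoint union of sets is commutative.
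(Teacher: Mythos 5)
Your proof is correct, and it takes a somewhat different route from the paper's. The paper's (very terse) argument uses the operad structure of $\mathbb K$: it reduces both sides of (a) to the single composite $\bigl((\kappa\circ_1\alpha)\circ_2\beta\bigr)\circ_3\gamma$ with $\kappa=[\{1\},\{2\}]_3$, and both sides of (b) to the analogous composite with $\kappa'=[\{1,2\}]_3$, the point being that $\kappa$ and $\kappa'$ are invariant under the transposition of $1$ and $2$; the details (which require the sequential and parallel associativity of the partial compositions on corollas, itself left to the reader earlier in the section) are omitted. You instead derive the closed formula $\beta\br\gamma=[B,C_1,\ldots,C_q]_r$ for $\gamma=[C_1,\ldots,C_q]_r$ directly from the two-case definition of $\circ_a$, and then read both identities off this formula using only the facts that branches are unordered and that the entire underlying set of the left factor is absorbed into a single branch. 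Your computation of the formula is accurate (it is consistent with the paper's worked examples of partial compositions), and the degenerate cases are handled correctly. What your approach buys is transparency and independence from the associativity axioms of $\mathbb K$, which the paper never verifies explicitly; what the paper's approach buys is brevity and a formulation that sits naturally inside the operadic framework used in the subsequent proof of Theorem~\ref{thm:corollas}. Either argument is acceptable; yours is arguably the more self-contained of the two.
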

\begin{proof}
Both sides of Equation (a) are equal to $\left(\Big(\treeo{
\eoo`3@
\draw (o)
\eee-1,1`l`1@
(o)--(l)
;
\draw (o)
\eee1,1`r`2@
(o)--(r)
;
}\circ_1\alpha\Big)\circ_2\beta\right)\circ_3\gamma$, and both sides of Equation (b) are equal to $\left(\Big(\treeo{
\eoo`3@
\draw (o)
\eee0,1`l`12@
(o)--(l);} \circ_1\alpha\Big)\circ_2\beta\right)\circ_3\gamma$. Details are left to the reader.
\end{proof}
\begin{theorem}\label{thm:corollas}
The operad $\mathbb K$ of corollas is the NAPNAP' operad.
\end{theorem}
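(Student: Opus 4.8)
The plan is to mimic the proof of Theorem \ref{thm:twist} very closely, replacing the twist-associative relation by the pair of relations NAP and NAP$'$. First I would set up the standard framework: the NAPNAP$'$ operad is by definition the quotient $\mathcal{N}:=\mathcal M/\langle r_1,r_2\rangle$ of the free magmatic operad $\mathcal M$ (generated by one binary operation $\nu$) by the ideal generated by $r_1=\nu\circ_1\nu-(\nu\circ_2\nu)^\tau$ encoding $x(yz)=y(xz)$ and $r_2=\nu\circ_1\nu-(\nu\circ_1\nu)^\tau$ encoding $(xy)z=(yx)z$, where $(-)^\tau$ denotes the appropriate transposition of the two leaves fed into the top copy. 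Writing $\widetilde\nu$ for the image of $\nu$ and $\rhd$ for the induced product $\alpha\rhd\beta:=(\widetilde\nu\circ_1\alpha)\circ_2\beta$, these relations become precisely $\alpha\rhd(\beta\rhd\gamma)=\beta\rhd(\alpha\rhd\gamma)$ and $(\alpha\rhd\beta)\rhd\gamma=(\beta\rhd\alpha)\rhd\gamma$ in $\mathcal N$. Since the preceding proposition shows the product $\br$ on corollas satisfies exactly identities (a) and (b), the universal property of the quotient yields a unique operad morphism $\Phi:\mathcal N\to\mathbb K$ with $\Phi(\widetilde\nu)=\treeo{\eoo`2@\draw (o)\eee0,1`l`1@(o)--(l);}$ (the corolla on $\{1,2\}$ with root $2$); one checks it is surjective because every corolla is built from this generator by iterated $\br$, and that $\Phi(\alpha\rhd\beta)=\Phi(\alpha)\br\Phi(\beta)$.

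Next I would construct the inverse $\Psi:\mathbb K\to\mathcal N$ by induction on arity. The key structural input is a normal form for corollas analogous to the decomposition $(x,y)=\mathbf 1\br(x',y)$ used in the twist case: given a corolla $\beta=[B_1,\dots,B_p]_r$ on a set $A$ of cardinality $n+1$, I would peel off one element — most naturally, pick any non-root leaf $b$ lying in a branch, or when a branch is a singleton use that, and write $\beta$ as $\mathbf 1\br\beta'$ or $\gamma\br\delta$ in a controlled way that strictly decreases arity. Because a corolla is essentially a rooted set with an unordered partition of the non-root part, and $\br$ on corollas merges the branch structures in a symmetric fashion, there will be several ways to decompose $\beta$; the heart of the argument is to show that $\Psi$ defined via any such decomposition is independent of the choice, using identities (a) and (b) together with the induction hypothesis, exactly as the well-definedness of $\Psi(x,y)$ was checked in the proof of Theorem \ref{thm:twist}. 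Then the identities $\Phi\Psi=\mathrm{Id}$ and $\Psi\Phi=\mathrm{Id}$ follow: the first from the decomposition and induction, and the second by writing an arbitrary element of $\mathcal N_A$ as $\beta\rhd\gamma$ (every element of the quotient of the magmatic operad is a product of lower-arity elements) and invoking $\Psi(\beta\br\gamma)=\Psi(\beta)\rhd\Psi(\gamma)$, which itself is proven by induction using (a) and the associativity-type consequences of NAP and NAP$'$.

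The main obstacle I anticipate is precisely the combinatorics of the normal form for corollas and the well-definedness of $\Psi$. In the twist-associative case the objects were ordered pairs, so the "peeling" was essentially canonical up to a single ambiguity; for corollas the branch decomposition is genuinely unordered and branches can have arbitrary size, so I expect to need a careful case analysis: pulling a leaf out of a multi-element branch versus collapsing a singleton branch versus rerooting, and checking all these yield the same element of $\mathcal N$ modulo $\langle r_1,r_2\rangle$. A secondary subtlety is bookkeeping the leaf-labels and the transpositions in the relations $r_1,r_2$ so that the non-sigma/sigma conventions are consistent with those used elsewhere in the paper (the NAPNAP$'$ operad is a symmetric set operad, unlike the non-sigma operads of the previous section), but this is routine once the conventions are fixed. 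Everything else — surjectivity of $\Phi$, multiplicativity of $\Phi$ and $\Psi$, the two round-trip identities — is a direct transcription of the Theorem \ref{thm:twist} argument.
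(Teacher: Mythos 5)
Your proposal follows the paper's proof essentially verbatim: the same surjective morphism $\Phi$ obtained from the universal property of the quotient of the magmatic operad by the NAP and NAP$'$ relations, and the same inverse $\Psi$ constructed by induction on arity, with well-definedness checked against the identities (a) and (b) for $\br$. The only point worth adding is that the paper resolves the normal-form issue you flag by peeling off one \emph{whole branch} at a time, writing $[B_1,\dots,B_p]_r=\beta_1\br[B_2,\dots,B_p]_r$ with $\beta_1$ an arbitrary corolla structure on $B_1$ (harmless, since $\br$ only sees the underlying set of its left factor), and then establishes independence of the choices by a secondary induction on the number of branches $p$, using the NAP and NAP$'$ identities to permute $B_1$ and $B_2$.
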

\begin{proof}
The NAPNAP' operad is defined as the quotient of the magmatic operad $\mathcal M$ by the NAP and NAP' relations, namely
$$
\hbox{NAPNAP'}:=\mathcal M\Big /\Big\langle\mu\circ_2\mu-\tau_{12}(\mu\circ_2\mu),\,\, \mu\circ_1\mu-\tau_{12}(\mu\circ_1\mu)\Big\rangle.
$$
Defining $\overline\mu$ as the image of $\mu$ in the quotient, we further introduce the product $\rhd$ on the NAPNAP' operad itself, defined by
$$\alpha\rhd\beta:=(\overline\mu\circ_1\alpha)\circ_2\beta.$$
The NAP and NAP' relations for $\overline\mu$ yield analogous relations for $\rhd$, namely
\begin{enumerate}
\item $\alpha\rhd(\beta\rhd\gamma)=\beta\rhd(\alpha\rhd\gamma)$,
\item $(\alpha\rhd\beta)\rhd\gamma=(\beta\rhd\alpha)\rhd\gamma$.
\end{enumerate}
The corolla $\treeo{
\eoo`2@
\draw (o)
\eee0,1`l`1@
(o)--(l);}$ respects both NAP and NAP' relations, namely
$$
\treeo{
\eoo`r@
\draw (o)
\eee0,1`l`1@
(o)--(l);}\circ_r \treeo{
\eoo`s@
\draw (o)
\eee0,1`l`2@
(o)--(l);}=\tau_{12}\Big(\,\treeo{
\eoo`r@
\draw (o)
\eee0,1`l`1@
(o)--(l);}\circ_r \treeo{
\eoo`s@
\draw (o)
\eee0,1`l`2@
(o)--(l);}\,\Big)
=\treeo{
\eoo`s@
\draw (o)
\eee-1,1`l`1@
(o)--(l)
;
\draw (o)
\eee1,1`r`2@
(o)--(r)
;
}.
$$
and
$$
\treeo{
\eoo`r@
\draw (o)
\eee0,1`l`a@
(o)--(l);}\circ_a \treeo{
\eoo`2@
\draw (o)
\eee0,1`l`1@
(o)--(l);}=\tau_{12}\Big(\,\treeo{
\eoo`r@
\draw (o)
\eee0,1`l`a@
(o)--(l);}\circ_a \treeo{
\eoo`2@
\draw (o)
\eee0,1`l`1@
(o)--(l);}\,\Big)
=\treeo{
\eoo`r@
\draw (o)
\eee0,1`l`12@
(o)--(l);}.
$$
Hence the operad morphism $\widetilde\Phi$ from $\mathcal M$ onto $\mathbb K$ uniquely defined by $\widetilde\Phi(\mu)=\treeo{
\eoo`2@
\draw (o)
\eee0,1`l`1@
(o)--(l);}$ vanishes on the ideal generated by the NAP and NAP' relations, giving rise to the unique surjective operad morphism
$$\Phi:\hbox{NAPNAP'}\longrightarrow\mathbb K$$
such that $\Phi(\mu)=\treeo{
\eoo`2@
\draw (o)
\eee0,1`l`1@
(o)--(l);}$. It is obvious that $\Phi$ changes product $\rhd$ into product $\br$. It remains to prove that $\Phi$ is an isomorphism. We will prove the existence of an inverse $\Psi:\mathbb K_B\to\hbox{NAPNAP'}_B$ of $\Phi:\hbox{NAPNAP'}_B\to \mathbb K_B$ for any finite set $B$ by induction on the cardinal of $B$. The cases where $B$ has one or two elements are trivial. Suppose the result to be true up to $n$ elements, and let $B$ be of cardinal $n+1$. For any corolla structure $\beta$ on $B$, there is $r\in B$ and a partition $B\setminus\{r\}=B_1\sqcup\cdots\sqcup B_p$ such that
$$\beta=[B_1,\ldots,B_p]_r.$$
We now proceed by a secondary induction on $p$. If $p=1$, we have $\beta=[B_1]_r=\beta'\br\{r\}$, where $\beta'$ is any corolla structure on $B\setminus\{r\}$, and where we identify the one-element set $\{r\}$ with the only corolla structure which exists on it. We set:
$$\Psi(\beta')\rhd\mathbf 1,$$
$\Phi\Psi(\beta)=\Phi\Psi(\beta')\br\Phi\Psi(\{r\})=\beta$ by induction hypothesis. For $p\ge 2$ we have
$$
\beta=[B_1,\ldots,B_p]_r=\beta_1\br [B_2,\ldots,B_p]_r,
$$
where $\beta_1$ is any corolla structure on $B_1$. We can define by induction hypothesis:
$$\Psi(\beta):=\Psi(\beta_1)\rhd\Psi([B_2,\ldots,B_p]_r).$$
We have again $\Phi\Psi(\beta)=\beta$ for the same reasons. To make sure that $\Psi(\beta)$ is well-defined, one has to prove that the result is invariant under permutation of the $p$ branches. Invariance under permutation of the $p-1$ last ones is obvious by secondary induction hypothesis. To get invariance under permutation of $B_1$ and $B_2$, define
$$\Psi'(\beta):=\Psi(\beta_2)\rhd\Psi([B_1,B_3\ldots,B_p]_r)$$
where $\beta_2$ is any corolla structure on $B_2$. We have then
\begin{eqnarray*}
\Psi(\beta)&=&\Psi(\beta_1)\rhd\Psi([B_2,\ldots,B_p]_r)\\
&=&\Psi(\beta_1)\rhd\Big(\Psi(\beta_2)\rhd\Psi([B_3,\ldots,B_p]_r)\Big)\\
&=&\Big(\Psi(\beta_1)\rhd\Psi(\beta_2)\Big)\rhd\Psi([B_3,\ldots,B_p]_r)\\
&=&\Big(\Psi(\beta_2)\rhd\Psi(\beta_1)\Big)\rhd\Psi([B_3,\ldots,B_p]_r)\\
&=&\Psi(\beta_2)\rhd\Big(\Psi(\beta_1)\rhd\Psi([B_3,\ldots,B_p]_r)\Big)\\
&=&\Psi(\beta_2)\rhd\Psi([B_1,B_3\ldots,B_p]_r)\\
&=&\Psi'(\beta).
\end{eqnarray*}
Finally we also have $\Psi\Phi=\hbox{Id}_{\hbox{\tiny NAPNAP'}}$. It is easily proven by induction on arity, using $\Psi(\alpha\br\beta)=\Psi(\alpha)\rhd\Psi(\beta)$. This ends up the proof of Theorem \ref{thm:corollas}.
\end{proof}
\ignore{
\begin{defn}
Let $\underline{n}=\{n_1,n_2,\ldots,n_k\},\underline{p}=\{p_1,p_2,\ldots,p_\ell\}$ be two labeled multisets of integers (by $[k]$ and $[\ell]$ respectively). Define partial compositions
\begin{equation}
\mlabel{eq:NAP'}\underline{n}\circ_{i} \underline{p}:=\{n_1,n_2,\ldots,n_{i-1}, p_1,\ldots,p_n,n_{i+1},\ldots,n_k\},\,\text{ for }\, 1\leq i\leq k.
\end{equation}
\end{defn}
\begin{exam}
$$\{1,2,2_*,5,6\}\circ_*\{3,5,7\}=\{1,2,3,5,7,5,6\}=\{1,2,3,5,5,6,7\},$$
where the star indicates where to insert.
\end{exam}
\begin{prop}
The NAPNAP' operad is given by labeled multisets of positive integers, together with the partial compositions defined by Eq.~(\mref{eq:NAP'}).
\end{prop}

\begin{proof}
Let $\underline{n}=\{n_1,n_2,\ldots,n_k\},\underline{p}=\{p_1,p_2,\ldots,p_\ell\}$ and $\underline{q}=\{q_1,q_2,\ldots, q_m\}$ be the three mutisets of integers $\underline{\mathbb N}$. Let $n_i\in \underline n,p_j\in \underline p, 1\leq i\leq k, 1\leq j\leq \ell$, we first prove the sequential associativity.
\begin{align*}
(\underline n\circ_{n_i}\underline p)\circ_{p_j}\underline q&=\{n_1,n_2,\ldots,
n_{i-1},p_1,p_2,\ldots,p_\ell,n_{i+1},\ldots,n_k\}\circ_{p_j}\{q_1,q_2,\ldots,q_m\}\\
&=\{n_1,n_2,\ldots,n_{i-1},p_1,p_2,\ldots,p_{j-1},q_1,\ldots,q_m,p_{j+1},\ldots,p_\ell,
n_{i+1},\ldots,n_k\}\\
&= \{n_1,\ldots,n_k\}\circ_{n_i}\{p_1,p_2,\ldots,p_{j-1},q_1,q_2,\ldots, q_m,p_{j+1},\ldots,p_\ell\}\\
&=\underline n\circ_{n_i}(\underline p\circ_{p_j}\underline q).
\end{align*}
Second, we prove the parallel associativity. Let $n_i,n_j\in\underline n, 1\leq i<j\leq k.$ Then
\begin{align*}
(\underline n\circ_{n_i}\underline p)\circ_{n_j}\underline q&=
\{n_1,\ldots,n_{i-1},p_1,\ldots,p_\ell,n_{i+1},\ldots,n_k\}
\circ_{n_j}\{q_1,\ldots,q_m\}\\
&=\{n_1,\ldots,n_{i-1},p_1,\ldots,p_\ell,n_{i+1},
\ldots,n_{j-1},q_1,\ldots,q_m,n_{j+1},\ldots, n_k\}\\
&=\{n_1,\ldots,n_{j-1},q_1,\ldots,q_m,n_{j+1},\ldots,n_k\}\circ_{n_i}\underline p\\
&=(\underline n\circ_{n_j}\underline q)\circ_{n_i}\underline p.
\end{align*}
This completes the proof.
\end{proof}
}
\subsection{Two-parameter $\Omega$-pre-Lie algebras and the $\mathbf{Perm}$ operad}\label{par:perm}
Now we give the definition of two-parameter $\Omega$-pre-Lie algebras. This requires that the product $\br$ on $\Omega$ fulfils the requirements of the fourth case of Paragraph \ref{par:four}:
\begin{defn}
Let $\Omega$ be a \textsl{set-theoretical perm algebra} \mcite{CL2001, C2002}, i.e. a set with a product $\br$ such that
\begin{equation}\label{assoc-nap}
\alpha\br(\beta\br\gamma)=(\alpha\br\beta)\br\gamma=\beta\br(\alpha\br\gamma)=(\beta\br\alpha)\br\gamma
\end{equation}
for any $\alpha,\beta,\gamma\in\Omega$, i.e. we ask that the product $\br$ is both associative and NAP. A {\bf two-parameter $\Omega$-pre-Lie algebra} is a family $(A,(\rhd_{\alpha,\beta})_{\alpha,\beta\in\Omega})$ where $A$ is a vector space and $\rhd_{\alpha,\beta}:A\ot A\ra A,$ such that for any $x,y,z\in A$ and $\alpha,\beta\in\Omega,$ satisfying
  \begin{equation}\label{plf}
  x\rhd_{\alpha,\beta\br\gamma}(y\rhd_{\beta,\gamma}z)
-(x\rhd_{\alpha,\beta}y)
\rhd_{\alpha\br\beta,\gamma}z
=y\rhd_{\beta,\alpha\br\gamma}(x\rhd_{\alpha,\gamma}z)
-(y\rhd_{\beta,\alpha}x)\rhd_{\beta\br\alpha,\gamma}z.
  \end{equation}
\end{defn}
The Perm operad governing relations \eqref{assoc-nap} has been described in \cite{C2002}. In the species formalism, $\hbox{Perm}_A:=A$ for any finite set $A$, and the partial compositions are defined as follows: for any finite sets $A,B$, for any $a,a'\in A$ and $b'\in B$,
$$a'\circ_a b'=
\left\{
\begin{array}{ll}
b' & \text{ if } a=a';\\
a' & \text{ if } a\neq a'.
\end{array}
\right.
$$

Note that if $(\Omega,\br)$ is a set-theoretical Perm algebra, then it is also a semigroup, a twisted associative semigroup,
and a set-theoretical NAPNAP' algebra. Hence, we can consider the operad $\mathbf{As}_\Omega$ of family associative algebras
on $\Omega$ as in Case 1, $\mathbf{TAs}_\Omega$ of family twisted associative algebras on $\Omega$ defined by \eqref{family-twisted} as in Case 2,
$\mathbf{NAPNAP}'_\Omega$ of family NAPNAP' algebras on $\Omega$ as in Case 3, and 
$\mathbf{PreLie}_\Omega$ of family pre-Lie algebras on $\Omega$ as in Case 4. Then, in an immediate way:

\begin{prop}
For any set-theoretical Perm algebra $\Omega$, we have the two following diagrams.
 \diagramme{
 \xymatrix{
 \mathbf{As}\ar@{->>}[dr] &\\
\mathbf{TAs}\ar@{->>}[r] &\mathbf{Perm}\\
\mathbf{NAPNAP}'\ar@{->>}[ur]&
 }
 \hskip 18mm
  \xymatrix{
& \mathbf{As}_\Omega\\
\mathbf{PreLie}_\Omega\ar@{->>}[r]\ar@{->>}[ur]\ar@{->>}[dr] &\mathbf{TAs}_\Omega\\
 &\mathbf{NAPNAP}'_\Omega
 }
 }
\noindent The four operads in the first diagram are set operads, and all arrows are surjective.
\ignore{There exists three  surjective maps of operads
\begin{align*}
&\left\{\begin{array}{rcl}
\mathbf{PreLie}_\Omega&\rightarrow&\mathbf{As}_\Omega\\
\rhd_{\alpha,\beta}&\mapsto&\rhd_{\alpha,\beta},
\end{array}\right.&
&\left\{\begin{array}{rcl}
\mathbf{PreLie}_\Omega&\rightarrow&\mathbf{TAs}_\Omega\\
\rhd_{\alpha,\beta}&\mapsto&\rhd_{\alpha,\beta},
\end{array}\right.&
&\left\{\begin{array}{rcl}
\mathbf{PreLie}_\Omega&\rightarrow&\mathbf{NAPNAP'}_\Omega\\
\rhd_{\alpha,\beta}&\mapsto&\rhd_{\alpha,\beta}.
\end{array}\right.
\end{align*}
}
\end{prop}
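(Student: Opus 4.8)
The plan is to deduce both diagrams from a single elementary fact: if $I\subseteq J$ are operadic ideals in an operad $\mathcal Q$, then the canonical projection $\mathcal Q/I\twoheadrightarrow\mathcal Q/J$ is a surjective morphism of operads, and when $\mathcal Q$ together with both quotients are set operads and the projection is the identity on the generating binary operation, it is a morphism of set operads. So the whole argument reduces to checking, in each of the six cases, an inclusion of ideals.

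For the first diagram, recall that $\mathbf{As}$, $\mathbf{TAs}$ and $\mathbf{NAPNAP}'$ are the quotients of the magmatic operad $\mathcal M$ by, respectively, the associativity relation, the twist-associativity relation \eqref{relations-operad}, and the pair of NAP and NAP' relations, while $\mathbf{Perm}$ is the quotient of $\mathcal M$ by the ideal generated by the relations \eqref{assoc-nap}. Since a Perm algebra satisfies $x(yz)=(xy)z=y(xz)=(yx)z$, the elements $x(yz)-(xy)z$, $x(yz)-(yx)z$, $x(yz)-y(xz)$ and $(xy)z-(yx)z$ all lie in the Perm ideal, each being one of the relations \eqref{assoc-nap} or a trivial difference of two of them. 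Hence the three defining ideals are contained in the Perm ideal, which yields the three surjections $\mathbf{As},\mathbf{TAs},\mathbf{NAPNAP}'\twoheadrightarrow\mathbf{Perm}$. That the four operads are set operads is already known: $\mathbf{As}$ has a one-element basis in each arity (the labeled left comb), $\mathbf{TAs}\cong\mathcal P$ by Theorem \ref{thm:twist}, $\mathbf{NAPNAP}'\cong\mathbb K$ by Theorem \ref{thm:corollas}, and $\mathbf{Perm}_A=A$; and the three projections, being the identity on the binary generator, restrict to morphisms of set operads.

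For the second diagram, fix a set-theoretical Perm algebra $(\Omega,\br)$. As noted just before the statement, $\Omega$ is then at once a semigroup, a twisted semigroup and a NAPNAP' set, so $\mathbf{As}_\Omega$, $\mathbf{TAs}_\Omega$, $\mathbf{NAPNAP}'_\Omega$ and $\mathbf{PreLie}_\Omega$ are all defined, each being the quotient of the free operad on the family $(\rhd_{\alpha,\beta})_{\alpha,\beta\in\Omega}$ of binary generators by the ideal generated by the arity-$3$ relations written out in Paragraph \ref{par:four}. The crux is the purely formal observation that the family pre-Lie relation \eqref{plf} is, in each of the three cases, a combination of two instances of the defining relations of the target operad. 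Writing $R(x,y,z;\alpha,\beta,\gamma)$ for the left-hand minus right-hand side of a relation, a direct computation shows that \eqref{plf} reads $R(x,y,z;\alpha,\beta,\gamma)-R(y,x,z;\beta,\alpha,\gamma)$ when $R$ is the family associative relation, the very same expression when $R$ is the family twisted associative relation \eqref{family-twisted} (its minus sign being absorbed into $R$), and the difference of the two family NAPNAP' relations taken at $(x,y,z;\alpha,\beta,\gamma)$ in the NAPNAP' case. Hence the family pre-Lie ideal is contained in each of the other three ideals, yielding the three surjections $\mathbf{PreLie}_\Omega\twoheadrightarrow\mathbf{As}_\Omega,\mathbf{TAs}_\Omega,\mathbf{NAPNAP}'_\Omega$; surjectivity is immediate because the generators $\rhd_{\alpha,\beta}$ are sent to the generators of the target, which generate it. The only spot needing some care — the closest thing here to an obstacle — is bookkeeping the minus sign and the $\alpha\leftrightarrow\beta$ relabelings in the twisted associative case, where \eqref{family-twisted} itself carries a sign; once written out the identity falls out on the nose, and everything else is formal.
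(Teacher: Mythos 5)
Your proof is correct and is exactly the ideal-containment argument that the paper leaves implicit (the proposition is stated as following ``in an immediate way'' with no written proof): each source operad is a quotient of the relevant free operad by an ideal contained in that of the target, and the family pre-Lie relation \eqref{plf} is indeed the difference $R(x,y,z;\alpha,\beta,\gamma)-R(y,x,z;\beta,\alpha,\gamma)$ of two instances of the defining relation in each of the three cases. The sign bookkeeping in the twisted-associative case checks out, so nothing is missing.
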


\section{Color-mixing operads and family algebraic structures}\label{sect:multigraded}
We follow the lines of \cite[Section 2]{A2020}, except that we consider gradings taking values in an arbitrary set $\Omega$ rather than in a semigroup. The key point is that if an algebraic structure on a graded object is compatible with the grading in a natural sense, this algebraic structure in turn provides an algebraic structure on $\Omega$. For example, a degree-compatible associative algebra structure on a graded object yields a semigroup structure on $\Omega$, a degree-compatible dendriform (resp. duplicial) algebra structure on a graded object yields a diassociative (resp. duplicial) semigroup structure on $\Omega$, and so on.

\subsection{Color-mixing operads: the principle}
Let $\Omega$ be a set of colors and $\mathcal C$ be a bicomplete monoidal category. Keeping the notations of Paragraphs \ref{par:colored} and \ref{par:grad}, for any operad $\mathcal P$, all components $\mathcal P^\Omega_{A,\underline\alpha,\omega}$ of the colored operad $\mathcal P^\Omega$ are isomorphic once the finite set $A$ of inputs is fixed. This reflects the fact that, if a $\mathcal P$-algebra $V$ is a coproduct
$$V=\coprod_{\omega\in\Omega}V_\omega,$$
any operation $\mu:V^{\otimes A}\to V$ with $A$-indexed inputs has a priori nonzero components
$$\mu_{\underline\alpha,\omega}:\bigotimes_{a\in A} V_{\underline\alpha(a)}\longrightarrow V_\omega$$
for any $\underline\alpha\in\Omega^A$ and $\omega\in\Omega$. This clearly contradicts the principle outlined in the introducting paragraph of this section, according to which the graded object $\mathcal V=(V_\omega)_{\omega\in\Omega}$ should be not only a $\mathcal P^\Omega$-algebra, but also a "graded $\mathcal P$-algebra" in some sense. This means that the output color $\omega$ should be a combination of the input colors $\underline\alpha$ in a way prescribed by the operad $\mathcal P$.
\ignore{
\subsection{Color-mixing set operads}\label{par:cmset}
We start with the more easy case of a set operad, more precisely when $\mathcal P$ is the linearization of a set operad $\mathbf P$. For any set of colors, the underlying colored species of the colored operad $\mathcal P^\Omega$ is described by
$$\mathcal P^\Omega_{A,\underline\alpha,\omega}=\bfk.\mathbf P_A$$
for any colored finite set $(A,\underline\alpha,\omega)$.
\begin{prop}\label{set-color-mix}
Suppose that $\Omega$ is a $\mathbf P$-algebra. Then the colored subspecies $\widetilde{\mathbf P}^\Omega$ (resp. $\widetilde{\mathcal P}^\Omega$) of $\mathbf P^\Omega$ (resp. $\mathcal P^\Omega$) defined by
$$\widetilde{\mathbf P}^\Omega_{A,\underline\alpha,\omega}:=\big\{\mu\in\mathbf P_A,\, \omega=\mu(\underline\alpha)\big\}$$
(resp. $\widetilde{\mathcal P}^\Omega=\bfk.\widetilde{\mathbf P}^\Omega$) is a set (resp. linear) colored suboperad.
\end{prop}
\begin{proof}
Let $(A,\underline\alpha,\omega)$ and $(B,\underline\beta,\zeta)$ be two $\Omega$-colored finite sets. Let $\mu\in\widetilde{\mathbf P}^\Omega_{A,\underline\alpha,\omega}$ and $\nu\in\widetilde{\mathbf P}^\Omega_{B,\underline\beta,\zeta}$. We have then by definition of $\widetilde{\mathbf P}^\Omega$,
$$\mu(\underline\alpha)=\omega\hskip 3mm\hbox{ and }\hskip 3mm \nu(\underline\beta)=\zeta.$$
Now let $a\in A$. The partial composition $\mu\circ_a\nu$ is defined in the colored operad $\mathbf P^\Omega$ if and only if $\zeta=\underline\alpha(a)$. In that case we obviously have
$$\omega=\mu(\underline\alpha)=(\mu\circ_a\nu)(\underline\alpha\sqcup_a\underline\beta),$$
hence $\mu\circ_a\nu\in\widetilde{\mathbf P}^\Omega_{A\sqcup_aB,\, \underline\alpha\sqcup_a\underline\beta,\,\omega}$. The proof of the linear case is similar except that we have $\mu\circ_a\nu=0$ if the matching condition $\zeta=\underline\alpha(a)$ is not verified.
\end{proof}
\noindent The colored operad $\widetilde{\mathbf P}^\Omega$ (resp. $\widetilde{\mathcal P}^\Omega$) is the \textbf{color-mixing operad} associated to the operad $\mathbf P$ (resp. $\mathcal P=\bfk.\mathbf P$). Its existence supposes a $\mathbf P$-algebra structure on the set of colors $\Omega$.
}
\subsection{Color-mixing linear operads}\label{par:cmlin}
From now on, we stick to the case when $\mathcal C$ is the category of vector spaces over some field $\bfk$. The coproduct is now given by the usual direct sum $\oplus$. Guided by the dendriform and the pre-Lie examples detailed in the previous sections, we see that the color set $\Omega$ will be endowed with a  \cP -algebra structure, where \cP is a set operad derived from the linear operad $\mathcal P$. For example, if $\mathcal P$ is the dendriform operad, \cP is the the diassociative operad, and if $\mathcal P$ is the pre-Lie operad, \cP is the Perm operad. When $\mathcal P$ is the linearization of a set operad $\mathbf P$, we should get \cP$=\mathbf P$, as the duplicial example suggests.\\

\noindent We suppose that $\mathcal P$ is of finite presentation, i.e. it can be written as
\begin{equation}
\mathcal P=\mathcal M_E\Big /\mathcal R,
\end{equation}
where $E$ is a set species of generators, $\mathbf M_E$ is the free set operad generated by $E$, and $\mathcal R$ is the operadic ideal of the linear operad $\mathcal M_E=\bfk.\mathbf M_E$ generated by a finite linearly independent collection $\mu^1,\ldots,\mu^N$ of elements. Each of these elements can be written as
$$\mu^i=\sum_{j=1}^{k_i}\lambda_j^i\mu_j^i,$$
where $(\mu_j^i)_j$ is a linearly independent collection of monomial expressions involving elements of $E$ and partial compositions, and $\lambda_j^i\in\bfk-\{0\}$.
\begin{defn} \label{defi:Prond}
The \textbf{set operadic equivalence relation generated by $\mathcal R$} is the finest equivalence relation \cR on $\mathbf M_E$, compatible with the set operad structure, such that
$$\mu_p^i\hbox{\cR}\mu_q^i\hbox{ for any }i\in\{1,\ldots,N\}\hbox{ and }p,q\in\{1,\ldots,k_i\}.$$
The \textbf{set operad associated to $\mathcal P$} is the set operad
$$\hbox{\cP}:=\mathbf M_E\Big/\hbox{\cR}.$$
\end{defn}
\begin{remark}
The set operad \cP depends on the presentation chosen for the linear operad $\mathcal P$. When $\mathcal P$ is given by the linearization of a set operad $\mathbf P$, we have $\hbox{\cP}=\mathbf P$. 
\end{remark}
\begin{remark}
Let $\mathbf{Q}$ be a quadratic set operad, and let $\mathcal{P}$ be the Koszul dual \cite{GK} of its linearization.
If $E=\mathbf{Q}_2$, the free set-operad $\mathbf{M}_E$ generated by $E$ is combinatorially represented
by binary trees with labeled leaves, and vertices decorated by elements of $E$. Let $\sim$ be
the equivalence on $\mathbf{M}_E(3)$ such that $\mathbf{M}_E(3)/\!\!\sim\,\,=\mathbf{Q}(3)$. 
By definition of the Koszul dual, $\mathcal{P}$ is generated by $E$, and the relations
\[\sum_{T\in C} \pm T=0,\]
where $C$ is a class of $\sim$ and the signs $\pm$ depend only of the form of the tree.
Applying  Definition \ref{defi:Prond}, we obtain that $\hbox{\cR}=\sim$, so in this case, $\hbox{\cP}=\mathbf{Q}$.
This holds for example if $\mathbf{Q}$ is the associative, or permutative, or diassociative operad: then $\mathcal{P}$ is the operad of, respectively,  associative,  or  pre-Lie, or dendriform algebras, 
with their usual presentations.
\end{remark}

\begin{prop}
Let $\Omega$ be a set endowed with a \cP-algebra structure. Considering the $\mathbf M_E$-algebra structure on $\Omega$ given by the operad morphism $\hbox{\cpi}:\mathbf M_E\surj{5}\hbox{\cP}$,
\begin{enumerate}
\item The colored subspecies $\widetilde{\mathbf M}_E^\Omega$ of $\mathbf M_E^\Omega$ defined by
$$(\widetilde{\mathbf M}_E^\Omega)_{A,\underline\alpha,\omega}:=\{\mu\in (M_E)_A,\, \mu(\underline\alpha)=\omega\}$$
is a set colored suboperad of $\mathbf M_E^\Omega$.
\item The colored subspecies $\widetilde{\mathcal M}_E^\Omega$ of $\mathcal M_E^\Omega$ defined by
$$(\widetilde{\mathcal M}_E^\Omega)_{A,\underline\alpha,\omega}:=\bfk.(\widetilde{\mathbf M}_E^\Omega)_{A,\underline\alpha,\omega}$$
is a linear colored suboperad of $\mathcal M_E^\Omega$.
\item The colored subspecies $\mathcal I$ of $\mathcal M_E^\Omega$ defined by
$$\mathcal I_{A,\underline\alpha,\omega}:=\bfk.\{\mu\in (\mathbf M_E)_{A},\, \mu(\underline\alpha)\neq \omega\}$$
is a right colored operadic ideal, and the quotient $\mathcal M_E/\mathcal I$ is isomorphic to $(\widetilde{\mathcal M}_E^\Omega)$ as a colored species.
\end{enumerate}
\end{prop}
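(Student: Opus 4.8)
The plan is to verify the three assertions in turn, each being a routine but somewhat tedious check that a matching condition on colours is preserved or annihilated by partial composition. Throughout, the $\mathbf M_E$-algebra structure on $\Omega$ is the one pulled back along $\hbox{\cpi}:\mathbf M_E\to\hbox{\cP}$, so for $\mu\in(\mathbf M_E)_A$ and $\underline\alpha\in\Omega^A$ the element $\mu(\underline\alpha)\in\Omega$ is well-defined, and the key compatibility we shall use repeatedly is the operad-algebra axiom $(\mu\circ_a\nu)(\underline\gamma)=\mu\bigl(\underline\gamma'\bigr)$ where $\underline\gamma'$ is obtained from $\underline\gamma$ by replacing the block of inputs lying in $B$ with the single value $\nu(\underline\gamma\restr{B})$; this holds because $\hbox{\cP}$ is a set operad and $\Omega$ a $\hbox{\cP}$-algebra, hence an $\mathbf M_E$-algebra.

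First I would prove (a). Let $(A,\underline\alpha,\omega)$ and $(B,\underline\beta,\zeta)$ be $\Omega$-coloured finite sets, $\mu\in(\widetilde{\mathbf M}_E^\Omega)_{A,\underline\alpha,\omega}$, $\nu\in(\widetilde{\mathbf M}_E^\Omega)_{B,\underline\beta,\zeta}$, and $a\in A$. By definition $\mu(\underline\alpha)=\omega$ and $\nu(\underline\beta)=\zeta$. The partial composition $\mu\circ_a\nu$ is nonzero in $\mathbf M_E^\Omega$ precisely when $\zeta=\underline\alpha(a)$, in which case the input colour list of $\mu\circ_a\nu$ is $\underline\alpha\sqcup_a\underline\beta$, whose restriction to $B$ is $\underline\beta$. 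Applying the compatibility above with $\underline\gamma=\underline\alpha\sqcup_a\underline\beta$: replacing the $B$-block by $\nu(\underline\beta)=\zeta=\underline\alpha(a)$ gives back exactly $\underline\alpha$, so $(\mu\circ_a\nu)(\underline\alpha\sqcup_a\underline\beta)=\mu(\underline\alpha)=\omega$. Hence $\mu\circ_a\nu\in(\widetilde{\mathbf M}_E^\Omega)_{A\sqcup_aB,\,\underline\alpha\sqcup_a\underline\beta,\,\omega}$. Closure under the relabelling isomorphisms is immediate since $\mu\mapsto\mu(\underline\alpha)$ is equivariant, and the colour-$\omega$ units lie in the subspecies; both associativity axioms are inherited from $\mathbf M_E^\Omega$. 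This gives (a), and (b) follows by $\bfk$-linearization: the span of a set-theoretic suboperad is a linear suboperad, the only new point being that when the matching condition fails the composition is $0$, which lies in every subspace.

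For (c), I would show that $\mathcal I$ is a right ideal and compute the quotient. Take $\mu\in(\mathbf M_E)_A$ with $\mu(\underline\alpha)\neq\omega$ (a generator of $\mathcal I_{A,\underline\alpha,\omega}$) and compose on the right with an arbitrary $\nu\in(\mathcal M_E^\Omega)_{B,\underline\beta,\zeta}$ at some $a\in A$. If the colour match $\zeta=\underline\alpha(a)$ fails the result is $0\in\mathcal I$; if it holds, then by the same computation as above $(\mu\circ_a\nu)(\underline\alpha\sqcup_a\underline\beta)=\mu(\underline\alpha)\neq\omega$, so $\mu\circ_a\nu\in\mathcal I_{A\sqcup_aB,\,\underline\alpha\sqcup_a\underline\beta,\,\omega}$ again. (One should remark that $\mathcal I$ need not be a left ideal, since composing $\nu$ into the \emph{first} slot can change the output colour; this is why only the quotient as a right module, equivalently as a colored species with the residual right action, is claimed.) Finally, for each $(A,\underline\alpha,\omega)$ the set $(\mathbf M_E)_A$ is the disjoint union of $\{\mu:\mu(\underline\alpha)=\omega\}$ and $\{\mu:\mu(\underline\alpha)\neq\omega\}$, so $(\mathcal M_E^\Omega)_{A,\underline\alpha,\omega}=(\widetilde{\mathcal M}_E^\Omega)_{A,\underline\alpha,\omega}\oplus\mathcal I_{A,\underline\alpha,\omega}$ as vector spaces; the projection onto the first summand is a colored-species morphism (it commutes with relabellings, again by equivariance of $\mu\mapsto\mu(\underline\alpha)$) with kernel $\mathcal I$, whence $\mathcal M_E^\Omega/\mathcal I\cong\widetilde{\mathcal M}_E^\Omega$ as colored species.

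The main obstacle, such as it is, is purely bookkeeping: making sure the operad-algebra compatibility identity $(\mu\circ_a\nu)(\underline\gamma)=\mu(\underline\gamma\text{ with the }B\text{-block collapsed to }\nu(\underline\gamma\restr B))$ is stated and used correctly, and keeping straight that the matching condition on colours in $\mathbf M_E^\Omega$ (input colour $\underline\alpha(a)$ equals output colour of the inserted operation) is exactly the condition under which the collapsed list reproduces $\underline\alpha$. No Koszulness, no finite-presentation hypothesis, and no properties of $\mathcal R$ are needed here — this proposition is about $\mathbf M_E^\Omega$ alone and only uses that $\Omega$ is a $\hbox{\cP}$-algebra, hence an $\mathbf M_E$-algebra via $\hbox{\cpi}$.
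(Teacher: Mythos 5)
Your proof is correct and follows essentially the same route as the paper's: the matching-condition computation $(\mu\circ_a\nu)(\underline\alpha\sqcup_a\underline\beta)=\mu(\underline\alpha)$ gives (a), linearization gives (b), and the same computation together with the partition of $(\mathbf M_E)_A$ into $\{\mu:\mu(\underline\alpha)=\omega\}$ and its complement gives (c). The only differences are that you make explicit what the paper leaves implicit — the operad-algebra compatibility identity behind the "obvious" evaluation step, the direct-sum decomposition underlying the species isomorphism, and the (correct) observation that $\mathcal I$ is only a right ideal because insertion into the outer slot can change the output color.
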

\begin{proof}
Let $(A,\underline\alpha,\omega)$ and $(B,\underline\beta,\zeta)$ be two $\Omega$-colored finite sets. Let $\mu\in(\widetilde{\mathbf M}_E^\Omega)_{A,\underline\alpha,\omega}$ and $\nu\in(\widetilde{\mathbf M}_E^\Omega)_{B,\underline\beta,\zeta}$. We have then by definition of $\widetilde{\mathbf M}_E^\Omega$,
$$\mu(\underline\alpha)=\omega\hskip 3mm\hbox{ and }\hskip 3mm \nu(\underline\beta)=\zeta.$$
Now let $a\in A$. The partial composition $\mu\circ_a\nu$ is defined in the colored operad $\mathbf M_E^\Omega$ if and only if $\zeta=\underline\alpha(a)$. In that case we obviously have
$$\omega=\mu(\underline\alpha)=(\mu\circ_a\nu)(\underline\alpha\sqcup_a\underline\beta),$$
hence $\mu\circ_a\nu\in(\widetilde{\mathbf M}_E^\Omega)_{A\sqcup_aB,\, \underline\alpha\sqcup_a\underline\beta,\,\omega}$. The second assertion is an immediate consequence of the first. Now let $\mu\in(\mathbf M_E)_{A}$ and $\nu\in(\mathbf M_E)_{B}$ where $A$ and $B$ are two finite sets, and choose $a\in A$. The partial composition $\mu\circ_a\nu$ vanishes in $\mathcal M_E^\Omega$ unless the color matching condition $\zeta=\underline\alpha(a)$ is verified. If $\mu\in\mathcal I$, then by definition $\mu(\underline\alpha)\neq\omega$, hence 
$$
(\mu\circ_a\nu)(\underline\alpha\sqcup_a\underline\beta)=
\left\{
\begin{matrix}
\mu(\underline\alpha) &\hbox{ if } \zeta=\underline\alpha(a),\\
0 &\hbox{ if not},
\end{matrix}
\right.
$$
hence $\mu\circ_a\nu\in\mathcal I$. The last assertion is obvious from the definition.
\end{proof}
We denote by $\mathcal J$ the two-sided colored operadic ideal of $\mathcal M_E$ generated by $\mathcal I$. The following corollary is immediate:

\begin{coro}
Let $\pi$ be the projection from the free linear operad $\mathcal M_E$ onto $\mathcal P$, and let \cpi be the projection from $\mathcal M_E$ onto $\bfk$.\cP. Suppose that $\Omega$ is a \cP-algebra. Then the colored subspecies $\widetilde{\mathcal P}^\Omega:=\pi(\widetilde {\mathcal M}_E)$ of $\mathcal P^\Omega$ 
is a linear colored suboperad of $\mathcal P^\Omega$, and $\pi(\mathcal J)$ is a two-sided colored operadic ideal of $\mathcal P^\Omega$. 
\end{coro}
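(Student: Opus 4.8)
The plan is to deduce everything from the preceding proposition by applying the operad morphism $\pi:\mathcal M_E\to\mathcal P$ and tracking what happens to the colored suboperad $\widetilde{\mathcal M}_E^\Omega$ and to the ideals $\mathcal I$ and $\mathcal J$. First I would observe that $\pi$, being a morphism of linear operads, is automatically a morphism of $\Omega$-colored operads $\mathcal M_E^\Omega\to\mathcal P^\Omega$ after applying the color-change/uniformization functor of Paragraph~\ref{par:colored} (both $\mathcal M_E^\Omega$ and $\mathcal P^\Omega$ are the uniform colored operads $\kappa^*\mathcal M_E$ and $\kappa^*\mathcal P$ for $\kappa:\Omega\to\{*\}$, and $\kappa^*$ is functorial). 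Since $\widetilde{\mathcal M}_E^\Omega$ is a linear colored suboperad of $\mathcal M_E^\Omega$ by part~(b) of the Proposition, its image $\widetilde{\mathcal P}^\Omega:=\pi(\widetilde{\mathcal M}_E^\Omega)$ is a linear colored suboperad of $\pi(\mathcal M_E^\Omega)=\mathcal P^\Omega$: the image of a suboperad under an operad morphism is a suboperad, because partial compositions and the unit are preserved.

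Next I would treat the ideal statement. The general principle is that the image of a two-sided operadic ideal under a \emph{surjective} operad morphism is a two-sided operadic ideal; here $\pi$ is surjective by construction (it is the canonical projection onto a presented quotient). So $\pi(\mathcal J)$ is a two-sided colored operadic ideal of $\pi(\mathcal M_E^\Omega)=\mathcal P^\Omega$. Concretely one checks: for $\xi\in\pi(\mathcal J)$, write $\xi=\pi(\eta)$ with $\eta\in\mathcal J$; for any $\zeta\in\mathcal P^\Omega$, lift $\zeta$ to $\tilde\zeta\in\mathcal M_E^\Omega$ with $\pi(\tilde\zeta)=\zeta$; then $\zeta\circ_a\xi=\pi(\tilde\zeta\circ_a\eta)$ and $\xi\circ_a\zeta=\pi(\eta\circ_a\tilde\zeta)$ both lie in $\pi(\mathcal J)$ because $\mathcal J$ is a two-sided ideal of $\mathcal M_E^\Omega$, and similarly for compositions at several slots. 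The color-matching vanishing of partial compositions in $\mathcal P^\Omega$ causes no trouble, since $0\in\pi(\mathcal J)$.

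The only genuinely delicate point, which I expect to be the main obstacle, is making sure that ``$\widetilde{\mathcal P}^\Omega$ is a suboperad of $\mathcal P^\Omega$'' is the right statement and not a tautology — i.e. that the image $\pi(\widetilde{\mathcal M}_E^\Omega)$ is genuinely characterized inside $\mathcal P^\Omega$, compatibly with the $\cP$-algebra structure on $\Omega$ via $\cpi=\pi$ composed with the projection $\mathcal M_E\to\bfk.\cP$. One should note that the well-definedness uses precisely that the defining relations $\mu^i$ of $\mathcal R$ have all their monomials $\mu_j^i$ sitting in one class of $\cR$, so that $\mu^i(\underline\alpha)$ is unambiguous as an element of $\cP$, hence the color condition $\mu(\underline\alpha)=\omega$ descends from $\mathbf M_E$ to $\cP$. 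That is exactly Definition~\ref{defi:Prond}, so the corollary really is immediate once the Proposition and the surjectivity of $\pi$ are in hand; the proof amounts to: apply $\pi$, use functoriality of $\kappa^*$, use that images of suboperads are suboperads, and that images of two-sided ideals under surjections are two-sided ideals.
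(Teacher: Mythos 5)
Your proposal is correct and matches the paper's reasoning: the paper simply declares the corollary immediate from the preceding proposition, and your argument spells out exactly the two standard facts that make it so (the image of a colored suboperad under the colored operad morphism induced by $\pi$ is a suboperad, and the image of a two-sided colored operadic ideal under a surjective morphism is again a two-sided ideal). Your closing remark on the well-definedness of the color condition is a harmless aside rather than a needed step, since the color condition is imposed at the level of $\mathbf M_E$ via the $\mathbf M_E$-algebra structure pulled back from the set operad, which the proposition already handles.
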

\begin{defn}
The colored operad $\wwt{\mathcal P}^\Omega:=\mathcal P/\pi(\mathcal J)$ is the \textbf{color-mixing operad} associated to the operad $\mathcal P$. It does depend on the presentation $\mathcal P=\mathcal M_E/\mathcal R$, and supposes a \cP-algebra structure on $\Omega$.
\end{defn}
\begin{remark}
The notion of color-mixing operad was already approached in the case when $\Omega$ is a commutative semigroup~: an $\Omega$-colored operad in which the output color is the sum of the input colors was given the name \textsl{current-preserving operad} in \cite{S2014}. The colored suboperad $\widetilde{\mathcal P}^\Omega$ associated to any ordinary operad $\mathcal P$ is an example.
\end{remark}
\begin{remark}
The right ideal $\mathcal I$ is not two-sided in general, hence the color-mixing operad $\wwt{\mathcal P}^\Omega$ is in general a proper quotient of the colored suboperad $\widetilde{\mathcal P}^\Omega$.
\end{remark}
\subsection{Graded algebras over a color-mixing operad and family structures}
Let $\Omega$ be a set, let $\bfk$ be a field, and let $\mathcal P$ be an operad in the category of $\bfk$-vector spaces. We keep the notations of the previous paragraphs, and in particular we fix a finite presentation $\mathcal P=\mathcal M_E/\mathcal R$.
\begin{defn}
An \textbf{$\Omega$-graded $\mathcal P$-algebra} is an algebra over the $\Omega$-colored operad $\wwt{\mathcal P}^\Omega$, i.e. an $\Omega$-graded $\bfk$-vector space $\mathcal V$ together with a morphism of colored operads $\Phi:\wwt{\mathcal P}^\Omega\to\hbox{End}(\mathcal V)$.
\end{defn}
\noindent Let us remark that the notion of $\Omega$-graded $\mathcal P$-algebra depends on the presentation of the operad $\mathcal P$.
\begin{prop}\label{omega-graded}
Any $\Omega$-graded $\mathcal P$-algebra $\mathcal V$  is an algebra over both colored operads $\mathcal P^\Omega$ and $\widetilde{\mathcal P}^\Omega$.
\end{prop}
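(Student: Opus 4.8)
The plan is to unwind the definitions so that being an $\wwt{\mathcal P}^\Omega$-algebra automatically produces the structure maps of $\mathcal P^\Omega$- and $\widetilde{\mathcal P}^\Omega$-algebras by precomposition with the canonical colored operad surjections. First I would record the chain of colored operad morphisms relating the three objects. By construction $\wwt{\mathcal P}^\Omega = \mathcal P^\Omega/\pi(\mathcal J)$, so there is a canonical surjective morphism of $\Omega$-colored operads $q_1: \mathcal P^\Omega \twoheadrightarrow \wwt{\mathcal P}^\Omega$. Moreover, the corollary just before the definition of $\wwt{\mathcal P}^\Omega$ identifies $\widetilde{\mathcal P}^\Omega$ with $\pi(\widetilde{\mathcal M}_E^\Omega)$, a linear colored \emph{sub}operad of $\mathcal P^\Omega$, and $\pi(\mathcal J)$ is a two-sided colored operadic ideal of $\mathcal P^\Omega$; intersecting with this suboperad, one gets that $q_1$ restricted to $\widetilde{\mathcal P}^\Omega$ lands in $\wwt{\mathcal P}^\Omega$ and is still surjective — indeed $\wwt{\mathcal P}^\Omega$ is described in the preceding remark as a quotient of $\widetilde{\mathcal P}^\Omega$ (the right ideal $\mathcal I$ not being two-sided). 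So I would produce a second surjective colored operad morphism $q_2:\widetilde{\mathcal P}^\Omega \twoheadrightarrow \wwt{\mathcal P}^\Omega$, together with the inclusion $\iota:\widetilde{\mathcal P}^\Omega \hookrightarrow \mathcal P^\Omega$.

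The second step is the actual argument, which is then essentially formal. Suppose $\mathcal V$ is an $\Omega$-graded $\mathcal P$-algebra, i.e. we are given a morphism of $\Omega$-colored operads $\Phi:\wwt{\mathcal P}^\Omega\to\hbox{End}(\mathcal V)$, where $\hbox{End}(\mathcal V)$ is the endomorphism colored operad of the $\Omega$-graded object $\mathcal V$ defined in \eqref{end-graded}. Composing, $\Phi\circ q_1: \mathcal P^\Omega\to\hbox{End}(\mathcal V)$ is a morphism of $\Omega$-colored operads, which by definition makes $\mathcal V$ an algebra over $\mathcal P^\Omega$; and $\Phi\circ q_2:\widetilde{\mathcal P}^\Omega\to\hbox{End}(\mathcal V)$ likewise makes $\mathcal V$ an algebra over $\widetilde{\mathcal P}^\Omega$. (Equivalently one can use $\Phi\circ q_1\circ\iota$ for the second one; it coincides with $\Phi\circ q_2$ since $q_1\circ\iota = q_2$ by the way the quotients are nested.) That is all there is to it: the content is precisely that the color-mixing operad sits \emph{below} both $\mathcal P^\Omega$ and its suboperad $\widetilde{\mathcal P}^\Omega$ in the lattice of colored operads, so an algebra over the smaller quotient is an algebra over the larger ones by restriction of scalars along the surjections.

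The only point that requires a little care — and the place I expect a referee to push — is the verification that $q_2$ is well-defined, i.e. that the two-sided ideal $\pi(\mathcal J)$ of $\mathcal P^\Omega$ meets the suboperad $\widetilde{\mathcal P}^\Omega=\pi(\widetilde{\mathcal M}_E^\Omega)$ in exactly the right ideal whose quotient is $\wwt{\mathcal P}^\Omega$. Concretely one must check that $\widetilde{\mathcal P}^\Omega \cap \pi(\mathcal J)$ is the right colored operadic ideal $\pi(\mathcal I\cap\widetilde{\mathcal M}_E^\Omega)$ already considered, so that $\widetilde{\mathcal P}^\Omega/(\widetilde{\mathcal P}^\Omega\cap\pi(\mathcal J))\cong \mathcal P^\Omega/\pi(\mathcal J)=\wwt{\mathcal P}^\Omega$; this is an isomorphism theorem for colored operads combined with the observation that $\mathcal I_{A,\underline\alpha,\omega}=\bfk.\{\mu\in(\mathbf M_E)_A,\ \mu(\underline\alpha)\neq\omega\}$ and its image under $\pi$ already span a complement to $\widetilde{\mathcal M}_E^\Omega$ (resp. its image) in each graded piece, so adding the two-sided saturation changes nothing on the diagonal color components where $\mu(\underline\alpha)=\omega$. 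Once this compatibility of the two quotient constructions is in hand, the proof of Proposition~\ref{omega-graded} is the one-line composition argument above.
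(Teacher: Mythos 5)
Your proposal is correct and is essentially the paper's own proof: the authors simply exhibit the chain $\widetilde{\mathcal P}^\Omega\hookrightarrow\mathcal P^\Omega\twoheadrightarrow\wwt{\mathcal P}^\Omega\to\hbox{End}(\mathcal V)$ and compose, exactly as you do with $\Phi\circ q_1$ and $\Phi\circ q_1\circ\iota$. The only difference is that your worry about $q_2$ being a well-defined surjection is superfluous --- to obtain the $\widetilde{\mathcal P}^\Omega$-algebra structure one only needs the composite morphism of colored operads, not any identification of $\wwt{\mathcal P}^\Omega$ with a quotient of $\widetilde{\mathcal P}^\Omega$.
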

\begin{proof}
It is an immediate consequence of the following diagram of $\Omega$-colored operads:
$$\widetilde{\mathcal P}^\Omega\inj{8}{\mathcal P}^\Omega\surj{8}\wwt{\mathcal P}^\Omega\longrightarrow \hbox{End}(\mathcal V).$$
\end{proof}
We are now ready to define $\Omega$-family $\mathcal P$-algebras, also called $\Omega$-relative $\mathcal P$-algebras in M. Aguiar's terminology \cite[Definition 14]{A2020}:
\begin{defn}
An \textbf{$\Omega$-family $\mathcal P$-algebra} is an $\Omega$-graded $\mathcal P$-algebra for which the underlying $\Omega$-graded object is uniform.
\end{defn}
\noindent Again, this notion depends on the presentation of $\mathcal P$.
\begin{prop}
Any $\Omega$-family $\mathcal P$-algebra is an $\Omega$-graded vector space $\mathcal U(V)$, where $V$ is an algebra over the operad $\overline{\mathcal F}(\wwt{\mathcal P}^\Omega)$.
\end{prop}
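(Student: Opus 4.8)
The plan is to unwind the definitions and use the adjunction $(\overline{\mathcal F},\mathcal U)$ established in Paragraph \ref{par:grad}. First I would recall what an $\Omega$-family $\mathcal P$-algebra is: by definition it is an $\Omega$-graded $\mathcal P$-algebra $\mathcal V$ whose underlying $\Omega$-graded object is uniform, so $\mathcal V=\mathcal U(V)$ for some $\bfk$-vector space $V$, and the structure map is a morphism of $\Omega$-colored operads $\Phi:\wwt{\mathcal P}^\Omega\to\hbox{End}(\mathcal U(V))$. The goal is therefore to identify $\hbox{End}(\mathcal U(V))$, or at least the relevant part of it, with $\hbox{End}(V)$ after applying $\overline{\mathcal F}$, so that $\Phi$ corresponds bijectively to an ordinary operad morphism $\overline{\mathcal F}(\wwt{\mathcal P}^\Omega)\to\hbox{End}(V)$.

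The key step is a direct computation. By \eqref{end-graded}, for the uniform graded object $\mathcal U(V)$ we have
$$\hbox{End}(\mathcal U(V))_{A,\underline\alpha,\omega}=\hbox{Hom}_{\mathcal C}\Bigl(\bigotimes_{a\in A}V,\,V\Bigr)=\hbox{End}(V)_A,$$
independently of $\underline\alpha$ and $\omega$; in other words $\hbox{End}(\mathcal U(V))=\mathcal U(\hbox{End}(V))$ where now $\mathcal U$ denotes the uniformization functor from ordinary operads to $\Omega$-colored operads (i.e. $\hbox{End}(V)^\Omega$ in the notation of Paragraph \ref{par:colored}). Then, applying the formula \eqref{forgetcolors} for the completed forgetful functor, a morphism of colored operads $\Phi:\wwt{\mathcal P}^\Omega\to\hbox{End}(V)^\Omega$ is, by the adjunction $\overline{\mathcal F}\dashv\mathcal U$ stated right after \eqref{forgetcolors}, the same datum as a morphism of ordinary operads $\overline{\mathcal F}(\wwt{\mathcal P}^\Omega)\to\hbox{End}(V)$, i.e. exactly an $\overline{\mathcal F}(\wwt{\mathcal P}^\Omega)$-algebra structure on $V$. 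Conversely, any such structure gives back, via $\mathcal U$, a morphism $\wwt{\mathcal P}^\Omega\to\hbox{End}(V)^\Omega=\hbox{End}(\mathcal U(V))$, hence an $\Omega$-family $\mathcal P$-algebra with underlying space $\mathcal U(V)$. So the correspondence is a bijection, which is the assertion.

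I would organize the write-up as: (1) observe $\hbox{End}(\mathcal U(V))=\hbox{End}(V)^\Omega$ from \eqref{end-graded}; (2) invoke the adjunction to translate $\hbox{Hom}_{\Omega\text{-col.op.}}(\wwt{\mathcal P}^\Omega,\hbox{End}(V)^\Omega)\cong\hbox{Hom}_{\text{op.}}(\overline{\mathcal F}(\wwt{\mathcal P}^\Omega),\hbox{End}(V))$; (3) conclude. The main obstacle — really the only subtlety — is step (1): one must be a little careful that $\hbox{End}$ of an $\Omega$-graded object is genuinely a colored operad in the sense of Paragraph \ref{par:colored} and that for a uniform graded object it coincides, as a colored operad (not merely as a colored species), with the uniform colored operad $\hbox{End}(V)^\Omega$ attached to the ordinary endomorphism operad; checking that partial compositions and the symmetric-group actions match up under this identification is routine but should be stated. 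Everything else is a formal consequence of the adjunction already recorded in the excerpt.
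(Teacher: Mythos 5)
Your proposal is correct and follows essentially the same route as the paper: identify $\hbox{End}\big(\mathcal U(V)\big)$ with the uniform colored operad $\mathcal U(\hbox{End}V)$ via \eqref{end-graded}, then invoke the adjunction $\overline{\mathcal F}\dashv\mathcal U$ to convert the colored operad morphism into an ordinary operad morphism $\overline{\mathcal F}(\wwt{\mathcal P}^\Omega)\to\hbox{End}V$. Your additional remarks on checking the identification at the level of partial compositions and on the bijectivity of the correspondence are reasonable elaborations of what the paper leaves implicit.
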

\begin{proof}
By definition, an $\Omega$-family $\mathcal P$-algebra is given by a vector space $V$ and a colored operad morphism
$$\Phi:\wwt{\mathcal P}^\Omega\longrightarrow \hbox{End}\big(\mathcal U(V)\big).$$
We have $ \hbox{End}\big(\mathcal U(V)\big)=\mathcal U(\hbox{End} V)$ by Equation \eqref{end-graded}.  The functor $\mathcal U$ of the left- (resp. right-) hand side is defined in Paragraph \ref{par:grad} (resp. \ref{par:colored}). The functor $\mathcal U$ is right-adjoint to the completed forgetful functor $\overline{\mathcal F}$ defined in Paragraph \ref{par:colored}, hence there is a morphism of ordinary operads from $\overline{\mathcal F}(\wwt{\mathcal P}^\Omega)$ to $\hbox{End}V$.
\end{proof}
Finally, we recover the close link between algebras and family algebras which was already observed on the known examples, and established by M. Aguiar in the case when $\Omega$ is a semigroup \cite[Paragraph 2.4]{A2020}:
\begin{prop}
Let $\mathcal V=\mathcal U(V)$ be an $\Omega$-family $\mathcal P$-algebra. Then the vector space $\mathcal F\mathcal U(V)=V\otimes\bfk\Omega$ is a $\mathcal P$-algebra.
\end{prop}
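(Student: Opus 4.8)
The plan is to trace through the chain of adjunctions and the definition of the color-mixing operad to see that the $\mathcal P$-algebra structure on $V\otimes\bfk\Omega$ is essentially forced. First I would recall that an $\Omega$-family $\mathcal P$-algebra $\mathcal V=\mathcal U(V)$ is, by the previous proposition, the same data as a $\mathcal P$-algebra structure on $V$ in the sense of a morphism of ordinary operads $\overline\Phi:\overline{\mathcal F}(\wwt{\mathcal P}^\Omega)\to\hbox{End}V$, obtained from the colored operad morphism $\Phi:\wwt{\mathcal P}^\Omega\to\hbox{End}\bigl(\mathcal U(V)\bigr)=\mathcal U(\hbox{End}V)$ via the $(\overline{\mathcal F},\mathcal U)$-adjunction of Paragraph \ref{par:colored}. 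So the real content is to produce, from $\overline{\mathcal F}(\wwt{\mathcal P}^\Omega)$-algebra data on $V$, a genuine $\mathcal P$-algebra structure on $V\otimes\bfk\Omega$, i.e. a morphism of ordinary operads $\mathcal P\to\hbox{End}(V\otimes\bfk\Omega)$.

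The key step is to exhibit a natural morphism of ordinary operads
$$\Theta:\mathcal P\longrightarrow \hbox{End}(V\otimes\bfk\Omega)$$
built from $\overline\Phi$ and the $\mathcal P$-structure (hence $\wwt{\mathcal P}^\Omega$-structure data indexed by colors) together with the $\cal P$-algebra structure on $\Omega$. Concretely, for an $A$-ary operation $\mu\in\mathcal P_A$ one defines
$$\Theta(\mu)\Bigl(\bigotimes_{a\in A}(x_a\otimes\alpha_a)\Bigr):=\Phi(\overline\mu_{\underline\alpha,\,\mu(\underline\alpha)})\Bigl(\bigotimes_{a\in A}x_a\Bigr)\otimes\mu(\underline\alpha),$$
where $\overline\mu_{\underline\alpha,\,\mu(\underline\alpha)}$ denotes the component of $\mu$, viewed in $\mathcal P^\Omega$ with input colors $\underline\alpha$ and output color $\mu(\underline\alpha)$, which by construction lands in the color-mixing suboperad $\wwt{\mathcal P}^\Omega$ (this is exactly the point of Definition \ref{defi:Prond} and the construction of $\wwt{\mathcal P}^\Omega$: the surviving component is the one whose output color is the $\cal P$-value on the inputs). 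One then checks that $\Theta$ respects partial compositions: this reduces, on the vector-space factor, to the compatibility of $\Phi$ with the partial compositions $\circ_a$ of $\wwt{\mathcal P}^\Omega$ as in \eqref{pccol}, and, on the $\bfk\Omega$-factor, to the fact that $\Omega$ is a $\cal P$-algebra, so that $(\mu\circ_a\nu)(\underline\alpha\sqcup_a\underline\beta)=\mu(\underline\alpha\sqcup_a\{\nu(\underline\beta)\})$ matches the color bookkeeping. The identity axiom for $\Theta$ is immediate from the unit of $\wwt{\mathcal P}^\Omega$. This shows $V\otimes\bfk\Omega=\mathcal F\mathcal U(V)$ is a $\mathcal P$-algebra.

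I expect the main obstacle to be purely bookkeeping: keeping track of which components of the colored operad $\wwt{\mathcal P}^\Omega$ are nonzero (only those for which the output color equals the value of the induced $\mathbf M_E$-operation on the input colors, which is well-defined precisely because the relation $\cR$ is compatible with the set-operad structure and $\Omega$ is a $\cal P$-algebra) and verifying that the operadic relations $\mathcal R$ of $\mathcal P$, once base-changed to $\bfk\Omega$-coefficients, are satisfied by $\Theta$ — which holds because $\Phi$ already satisfies them component-by-component in $\wwt{\mathcal P}^\Omega$ and the color labels on both sides of any relation agree (two monomials related by $\cR$ induce the same $\cal P$-value on inputs, by definition of $\cR$). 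Once this is set up, the verification is routine, so I would present the construction of $\Theta$ explicitly, note that the relation $\cR$ is preserved because related monomials carry the same output color, and leave the remaining diagram-chase to the reader, exactly in the style of the preceding propositions in this section.
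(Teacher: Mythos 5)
Your argument is correct and, once unwound, yields exactly the structure map the paper constructs, but the two proofs are packaged differently. The paper performs no computation at all: it invokes Proposition \ref{omega-graded} to regard $\mathcal U(V)$ as an algebra over the uniform colored operad $\mathcal P^\Omega=\mathcal U(\mathcal P)$, applies the adjunction between $\mathcal U$ and the forgetful functor $\mathcal F$ to obtain an ordinary operad morphism $\Psi:\mathcal P\to\mathcal F\mathcal U(\hbox{End} V)$, and concludes via the componentwise inclusion $\big(\mathcal F\mathcal U(\hbox{End} V)\big)_A\subset\hbox{End}\big(V\otimes\bfk\Omega\big)_A$. Your $\Theta$ is precisely this composite written out on elementary tensors, and what you propose to re-verify by hand (compatibility with partial compositions via the \cP-algebra structure on $\Omega$, and vanishing on $\mathcal R$ because \cR-equivalent monomials share the same output color) is exactly what the paper gets for free from Proposition \ref{omega-graded} together with functoriality of the adjunction. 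The paper's route is shorter; yours exhibits the $\mathcal P$-action on $V\otimes\bfk\Omega$ explicitly, which is more informative but duplicates verifications already carried out in the construction of the color-mixing operad.

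One point should be stated more carefully: $\mu(\underline\alpha)$ only makes sense when $\mu$ is a monomial, i.e.\ (the image of) an element of the set operad $\mathbf M_E$; a general element of $\mathcal P_A$ is a class of linear combinations of monomials that need not be pairwise \cR-equivalent, so it has no single output color. Your formula must therefore be read as defining $\Theta$ on $\mathbf M_E$, extended linearly to $\mathcal M_E$ (different monomials contributing to different homogeneous components of $V\otimes\bfk\Omega$), and then shown to descend modulo $\mathcal R$; this descent is exactly your closing remark, since each generator $\mu^i=\sum_j\lambda^i_j\mu^i_j$ has all its monomials \cR-equivalent, hence a common output color, so that $\Theta(\mu^i)$ is $\Phi$ applied to the image of $\mu^i$ in $\wwt{\mathcal P}^\Omega$, which vanishes. (Also, $\wwt{\mathcal P}^\Omega$ is a quotient of $\mathcal P^\Omega$, not a suboperad; the component you single out lives in the suboperad $\widetilde{\mathcal P}^\Omega$ and is then projected to $\wwt{\mathcal P}^\Omega$ before applying $\Phi$.) With these precisions your proof is complete.
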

\begin{proof}
From Proposition \ref{omega-graded}, $\mathcal V=\mathcal U(V)$ is an algebra over $\mathcal P^\Omega=\mathcal U(\mathcal P)$. We have then a colored operad morphism
$$\Phi:\mathcal U(\mathcal P)\longrightarrow \hbox{End}\big(\mathcal U(V)\big)=\mathcal U(\hbox{End} V).$$
Now the functor $\mathcal U$ is left-adjoint to the forgetful functor $\mathcal F$, hence there is an operad morphism
$$\Psi:\mathcal P\longrightarrow \mathcal F\mathcal U(\hbox{End} V).$$
We conclude by the following observation: for any finite set $A$ we have
\begin{eqnarray*}
\big(\mathcal F\mathcal U(\hbox{End} V)\big)_A&=&\bigoplus_{\underline\alpha\in \Omega^A,\,\omega\in\Omega}(\hbox{End} V)_A\\
&\subset& \bigoplus_{\omega\in\Omega}\prod_{\underline\alpha\in \Omega^A}(\hbox{End} V)_A\\
&=&\hbox{End}\big(\mathcal F\mathcal U(V)\big)_A.
\end{eqnarray*}
These inclusions $j_A$ yield an operad morphism $j$, hence $j\circ\Psi$ is an operad morphism from $\mathcal P$ to $\hbox{End}\big(\mathcal F\mathcal U(V)\big)$.
\end{proof}
\noindent {\bf Acknowledgments}:
The third author is supported by the National Natural Science Foundation
of China (Grant No.\@ 11771191 and 11861051).

\end{document}